\newcommand\N{\mathbb{N}}
\newcommand\R{\mathbb{R}}
\newcommand{\forget}[1]{}
\newcommand{\HH}{{\mathcal H}}
\def\Om{{\Omega}}  %%%{{\bar{\Omega}}}
\def\om2{{\Om\times\Om}}
\def\supp{\mathrm{supp}\,}
\def\div{\mathrm{div}\,}
\def\Lip{\mathrm{Lip}}
\newcommand{\A}{\mathcal{A}}
\newtheorem{theorem}{Theorem}[section]
\newtheorem{definition}[theorem]{Definition}
\newtheorem{lemma}[theorem]{Lemma}
\newtheorem{proposition}[theorem]{Proposition}
\newtheorem{corollary}[theorem]{Corollary}
\theoremstyle{remark}
\newtheorem{remark}[theorem]{Remark}
\newtheorem{example}[theorem]{Example}
\numberwithin{equation}{section}
\begin{document}
\title{The saga of a fish: from a survival guide to closing lemmas}
\author{Sergey Kryzhevich}
\address[Sergey Kryzhevich]{University of Nova Gorica, Vipavska,13, Nova Gorica, SI-5000, Slovenia
\and
Department of Mathematical Physics, Faculty of Mathematics and Mechanics,
St. Petersburg State University, Universitet\-skij pr.~28, Old Peterhof,
198504 St.Peters\-burg, Russia
}
\email[Sergey Kryzhevich]{kryzhevich@gmail.com}
\author{Eugene Stepanov}
\address[Eugene Stepanov]{
St.Petersburg Branch
of the Steklov Mathematical Institute of the Russian Academy of Sciences,
Fontanka 27,
191023 St.Petersburg,
Russia
\and
Higher School of Economics, Faculty of Mathematics, Usacheva str. 6, 119048 Moscow, Russia
%Department of Mathematical Physics, Faculty of Mathematics and Mechanics,
%St. Petersburg State University, Universitet\-skij pr.~28, Old Peterhof,
%198504 St.Peters\-burg, Russia
%\and ITMO University, Russia
}
\email[Eugene Stepanov]{stepanov.eugene@gmail.com}
\thanks{The first author was partially supported by RFBR grant \#18-01-00230-a. Both authors are grateful to Prof. S.~Crovisier for useful advices and discussion. The work of the second author
is %supported by the Program of the Presidium of the Russian
%Academy of Sciences \#01 ``Fundamental Mathematics and its Applications''
%under grant PRAS-18-01, and
partially supported by RFBR grant \#17-01-00678.
%and by the Russian
%government grant \#08-08, the Ministry of Education and Science of Russian Federation project
%\#14.Z50.31.0031.
}

\date{December 25, 2017}

\begin{abstract}
In the paper by D.~Burago, S.~Ivanov and A.~Novikov, ``A survival guide for feeble fish'', it has been shown that a fish with limited velocity can reach any point in the (possibly unbounded) ocean provided that the fluid velocity field is incompressible, bounded and has vanishing mean drift. This result extends some known global controllability theorems
though being substantially nonconstructive. We give a fish a different recipe of how to survive in a turbulent ocean, and show its relationship to structural stability of dynamical systems by providing a constructive way to change slightly the velocity field to produce conservative (in the sense of not having wandering sets of positive measure) dynamics. In particular, this leads to the extension of C.~Pugh's closing lemma to incompressible vector fields over unbounded domains. The results are based on an extension of the Poincar\'{e} recurrence theorem to some $\sigma$-finite measures and on specially constructed Newtonian potentials.
\end{abstract}

\keywords{global controllability, structural stability, Pugh closing lemma}

\maketitle

%\tableofcontents
%\newpage

\section{Introduction}

Consider the following problem first suggested in~\cite{BurNovIv16-fish}: a fish in an unbounded turbulent ocean is
able to move with its own velocity $u$ not exceeding in modulus the given value $\delta>0$. The ocean is assumed to be unbounded and is identified hereinafter with $\R^d$, its velocity field $V$ is assumed, as it is customary, to be bounded and \emph{incompressible}, i.e. $\mbox{div}\, V=0$. One is asked whether the fish can reach any point starting from an arbitrary one.
This is clearly the classical point-to-point controllability problem for the
autonomous system of ordinary differential equations
\begin{equation}\label{eq_odeaut1}
\dot x = V(x),
\end{equation}
where $x(\cdot)\in \R^d$. Namely, one has to
find a control function $u(\cdot)$ in some class of admissible controls such that
the trajectory $x(\cdot)$ of the system
\begin{equation}\label{eq_odectrl1}
\dot x = V(x) + u(t),
\end{equation}
starting at a given $x_0\in \R^d$ at time $t=0$ (i.e.\ satisfying
$x(0)=x_0$) arrives at some given $x_1\in \R^d$ at some finite time $\tau>0$, i.e.\ has $x(\tau)= x_1$.
The usual choice of the class of admissible controls is that of piecewise continuous functions $u\colon \R^+\to \R^d$ satisfying
$\|u\|_\infty \leq \delta$, where $\|\cdot\|_\infty$ is the supremum norm, and $\delta>0$ is given.

In case when the phase space of~\eqref{eq_odectrl1} instead of $\R^d$ is just a compact subset of the latter %(forward)
 invariant with respect to the flow of~\eqref{eq_odeaut1} (in particular, a smooth compact manifold), then the positive answer to the posed question is provided by the global controllability theorem~4.2.7 in~\cite{Bloch15-control} (there it is formulated for analytic vector fields
on compact Riemannian manifolds), while in the whole $\R^d$
the incompressibility condition of $V$ is clearly not enough for such a theorem to hold as can be seen just taking $V$ to be constant with sufficiently large modulus. This is in fact the only possible obstacle for controllability in the whole $\R^d$: in fact, it has been proven in~\cite{BurNovIv16-fish} that if $V$ is incompressible and has \emph{vanishing mean drift}
(called small mean drift in~\cite{BurNovIv16-fish}) in the sense
\[
\lim_{\ell\to \infty} \sup_{x\in \R^d}
\left|\frac{1}{\ell^d}\int_{[0,\ell]^d} V(x+y)\, dy\right|=0,
\]
then the above controllability problem in $\R^d$ is solvable for every couple of points $x_0$ and $x_1$ (this result has been further extended in~\cite{BurNovIv17-fish} to nonautonomous equations).
The respective proof is however by contradiction and hence strongly nonconstructive. In other words, one assures the fish that
it can reach any given destination without giving any clue on how to do that.

In search for a more constructive solution one might ask whether one can take the control to be given by a simple feedback
$u(\cdot)= W(x(\cdot))$, for some a priori unknown vector field $W\colon \R^d\to \R^d$, that could be explicitly constructed.
Once one looks at this problem under such a point of view, one immediately observes that in a particular case of
the \emph{return problem} (i.e.\ with $x_0=x_1$) the answer is positive when $x_0$ is a \emph{nonwandering} point of~\eqref{eq_odeaut1}, that is, in any neighborhood of $x_0$ there are points which return infinitely many times
in this neighborhood under the flow of~\eqref{eq_odeaut1}.
It is in fact provided by the famous Pugh's closing lemma~\cite{Pugh67}, which says that the perturbation $W$ of the original vector field $V$ can be
taken arbitrarily small not only in the uniform norm, but even in Lipschitz norm (sometimes in the dynamical systems literature  called $C^1$ norm).
The latter lemma is one of the fundamental results of structural stability theory for smooth dynamical systems, and is the first one of a series of similar results, the most well-known of which are the Ma\~{n}e ergodic closing lemma~\cite{Mane82}, the Hayashi connecting lemma~\cite{Hayashi97} and Bonatti and Crovisier connecting lemma~\cite{BonCrov04}
(see~\cite{AnosovZhuzh12} for the comprehensive overview on the subject).
It is important however to emphasize here the simple though quite striking observation that the
incompressibility of $V$, i.e.\ invariance of the Lebesgue measure under the flow induced by~\eqref{eq_odeaut1}, says nothing about existence of nonwandering points for this equation (as can be seen just by the example of a constant vector field $V$). %~\eqref{eq_odeaut1}
This is in sharp contrast with the case when~\eqref{eq_odeaut1} has a \emph{finite} invariant measure (which holds in particular when it has a compact invariant set): in the latter case all the points of the support of the invariant measure are nonwandering by the classical Poincar\'{e} recurrence theorem, and even assuming additionally the small mean drift condition for $V$ does not a priori improve this situation (see Remark~\ref{rm_Wdiv2}).

Our first principal result (Theorem~\ref{th_Pugh0}) shows that in fact under just incompressibility and vanishing mean drift condition on $V$ one can perturb the latter vector field by a small perturbation $W$ (even with small derivatives) so that \emph{every} point of $\R^d$ becomes
nonwandering with respect to the flow of $V+W$. This will be done by an explicit construction using Newtonian potential so as to ensure that the flow of $V+W$ preserve a new invariant measure, the support of which is the whole $\R^d$. This measure will still be not finite so that the classical Poincar\'{e} recurrence theorem cannot be applied, but will ``grow not too fast at infinity'', which will be shown to be enough for the extension of the latter theorem (Proposition~\ref{prop_Poincare1} and Corollary~\ref{co_Poincare1}) to hold. This
result combined with Pugh's closing lemma immediately implies its extension (Theorem~\ref{th_Pugh1}), namely, that
in fact \emph{every} chosen point of $\R^d$ can be made periodic
for a dynamical system provided by an ODE with incompressible vector field with vanishing mean drift at the right hand side
up to a small perturbation of the vector field in the Lipschitz norm.
Finally, we show that our construction actually implies the Burago-Ivanov-Novikov controllability theorem (Theorem~\ref{th_control1}), with a proof conceptually different from the original one, but very close to that of the classical controllability results for affine control systems with recurrent drift (see theorem~5 from~\cite[chapter~4]{Jurdj97} or theorem~4.2.7 in~\cite{Bloch15-control}). All the mentioned results are heavily based on the estimates for gradients
of constructed potentials, which are located in Appendix~\ref{sec_estcorr1} and use the results on vanishing mean drift condition from Appendix~\ref{sec_vmd0}.

\section{Notation and preliminaries}

%For a metric space $X$ endowed with distance $\rho$, a set $B\subset X$, and an $r>0$
%we denote by $(B)_r$ the $r$-neighborhood of $B$, namely,
%$(B)_r:=\{x\in X\colon \dist(x, B) <r\}$, by $B^c$ the complement
%of $B$ in $X$.

The finite-dimensional space $\R^d$ is assumed to be equipped
with the Euclidean norm $|\cdot|$,
notation $B_r(x)\subset \R^d$ stands for the usual open Euclidean ball of radius
$r$ centered at $x$, and $e_j$ to the $j$-th unit Cartesian coordinate vector.
The volume of the unit ball $B_1(0)\subset \R^d$ will be denoted $\omega_d$.
The usual scalar product of $x\in \R^d$ and $y\in \R^d$ is denoted by $x\cdot y$.
For any set $D\subset \R^d$, we let $\bar D$ be the closure of $D$,
$\mathbf{1}_D$ be its characteristic function,
$D^c:=\R^d\setminus D$.
%$\dist(x,D):=\inf\{|x-y|\,:\, y\in D\}$ whenever $x\in \R^d$.
%and $(D)_\varepsilon:=\{x\in E\,:\,\dist(x,D)< \varepsilon\}$.

We denote by $C(\R^d;\R^m)$ (respectively $C^k(\R^d;\R^m)$,
$C^{k,\beta}_{loc}(\R^d;\R^m)$,
$\Lip(\R^d;\R^m)$, $L^1(\R^d;\R^m)$, $L^\infty(\R^d;\R^m)$)
the set of continuous (respectively $k$-times continuously differentiable,
$k$-times continuously differentiable with
locally $\beta$-H\"{o}lder $k$-th derivatives, Lipschitz,
Lebesgue integrable, Lebesgue measurable and essentially bounded)
functions $f\colon \R^d\to \R^m$, omitting the reference for $\R^m$ when $m=1$, i.e. for real valued functions. The standard uniform norms in $C(\R^d;\R^m)$ and in $L^\infty(\R^d;\R^m)$ will be both denoted by $\|\cdot\|_\infty$.
By $L^1(\partial B_1(0);\HH^{d-1})$ we denote the class of functions
over $\partial B_1(0)$ integrable with respect to the $(d-1)$-dimensional Hausdorff measure $\HH^{d-1}$.
For a $V\in \Lip(\R^d;\R^m)$ we denote by $\Lip\, V$ its (least) Lipschitz constant, and also use the notation
$\|V\|_{\Lip}:=\|V\|_\infty +\Lip\, V$.
By $*$ we denote the convolution of functions.

All the measures
considered in the sequel are positive Radon measures, not necessarily finite.
For a Borel measure $\mu$ over a metric space $X$
we let $\supp\mu$ stand for its support, and for a Borel
map $T\colon X\to Y$ between metric spaces $X$ and $Y$ we denote
by $T_{\#}\mu$ the push-forward of $\mu$, i.e.\ the measure over
$Y$ defined by
$(T_{\#}\mu)(B):= \mu(T^{-1}(B))$ for every Borel $B\subset Y$.
For a metric space $X$
the set $M\subset X$ will be called %positively
invariant
for the map $T\colon X\to X$,
if $T(M)=M$, and the measure $\mu$ over $X$ will be called
%positively
invariant  for this map, if $T_{\#}\mu=\mu$
(of course, the map is assumed Borel in the latter case).
%For the
%sake of brevity we will further call positively invariant measures
%just invariant.
%Finally, $\HH^k$ will stand for the $k$-dimensional Hausdorff measure in $\R^d$.

\section{Accessibility}

In what follows we suppose that~\eqref{eq_odeaut1} is uniquely solvable and defined a flow
$T_t\colon \R^d\to \R^d$, $t\in \R$ %a flow of~\eqref{eq_odeaut1},
 by the formula $T_t(y):= x(t)$, where $x(\cdot)$ is a solution of~\eqref{eq_odeaut1}
satisfying $x(0)=y$. In the sequel, we will also denote the flow $T_t$ by $\varphi^t_V$ when we need to emphasize
that it is produced by the vector field $V$.
As the set of admissible controls
$U_\delta$ with given $\delta>0$ we consider, as usual in control theory,
the set of piecewise continuous functions $u\colon \R\to \R^d$ with
$\|u\|_\infty < \delta$, and set $U_0:=\{0\}$.
We recall the following definitions.

\begin{definition}\label{def_access1}
Given $\delta>0$, we say that a point $z\in \R^d$ is $\delta$-accessible from an $y\in \R^d$ in finite time
$\tau>0$, if there is an admissible control $u\in U_\delta$ such that
a trajectory of~\eqref{eq_odectrl1} with initial condition $x(0)=y$ arrives
in $z$ before time $\tau$, i.e.\ $x(s)=z$ for some $s\in [0,\tau]$. The set of
such points will be denoted by $\A(y,\tau,U_\delta)$.
We will also refer to
\[
\A(y,U_\delta):=\cup_{\tau>0} \A(y,\tau,U_\delta)
\]
as the set of points accessible from $y\in \R^d$ using controls in $U_\delta$.
%  and, for a set $M\subset \R^d$, we denote by
% \[
% \A(M,U_\delta):=\cup_{y\in M} \A(y,U_\delta),
% \]
% which is the set of points accessible from $M$ using controls in $U_\delta$.
\end{definition}

We also recall the following classical notions.

\begin{definition}\label{def_poisson1}
We say that
a set $\omega_x\subset\R^d$ is the $\omega$-limit set of $x\in \R^d$ for the flow $T_t$, if
it is the set of limit points of trajectories of~\eqref{eq_odeaut1}
starting at $x$, i.e.\ a set of $y\in \R^d$ such that there exist
a sequence $t_k\to +\infty$ (depending on y) such that
$T_{t_k}(x)\to y$ as $k\to \infty$.

A point $x\in \R^d$ is called
\begin{itemize}
\item[(i)] (forward) Poisson stable for the flow $T_t$, if $x\in \omega_x$, where
$\omega_x\subset\R^d$ is the $\omega$-limit set of $x$;
\item[(ii)] nonwandering for the flow $T_t$, if for every open $U\subset \R^d$, $x\in U$
one has $T_{t_k}(U)\cap U\neq \emptyset$ for some
sequence $t_k\to +\infty$ (depending on $U$).
\end{itemize}
\end{definition}

Clearly the closure of the set of Poisson stable points for the flow $T_t$ is contained in the set of nonwandering points for the latter (called usually \emph{nonwandering set}).

We will further use the following technical and probably folkloric (though not easily found in the literature) result (in a sense, a vaguely similar assertion is theorem~5 from~\cite[chapter~4]{Jurdj97}).

\begin{proposition}\label{prop_trans1}
Let $V\colon \R^d\to \R^d$ be bounded and %uniformly
continuous, such that~\eqref{eq_odeaut1} is uniquely solvable, hence
generating the flow $T_t:= \varphi^t_V$. Let also
$M\subset \R^d$
be a closed set %positively
invariant with respect to $T_t$
%the flow $T_t:= \varphi^t_V$
%of~\eqref{eq_odeaut1}
such that
the set of Poisson stable points of $T_t$ is dense in $M$.
% Assume, moreover, that $T_t$ sends closed subsets of $M$ into closed ones
%  (in particular, this holds when $T_t$ is a homeomorphism, or when $T_t$
% is just continuous and $M$ is compact).
If
%$M_0\subset M$ is a closed set %positively
%invariant with respect to $T_t$
%that
$M$ cannot be represented as a disjoint union of two non-empty closed
 subsets %positively
 invariant with respect to $T_t$, then
%$M_0\subset \A(x_0,U_\delta)$
$M\subset \A(x_0,U_\delta)$ for every $\delta>0$ and every $x_0\in M$.
%$x_0\in M_0$.
\end{proposition}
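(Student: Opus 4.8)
The plan is to adapt the classical argument for controllability of systems with recurrent drift (cf.\ theorem~5 in~\cite[chapter~4]{Jurdj97}): show that, modulo the density of Poisson stable points, the set $\A(x_0,U_\delta)\cap M$ is a nonempty subset of $M$ that is simultaneously relatively open, relatively closed and invariant, and then conclude from indecomposability. Everything rests on a single local reachability fact, which I would isolate first and which I expect to be the only delicate point of the proof.

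\textbf{Step 1 (local controllability near recurrent points --- the hard part).} I would prove that for every compact $K\subset\R^d$ there is $\rho_0=\rho_0(K,\delta,V)>0$ with $B_{\rho_0}(p)\subset\A(p,U_\delta)$ whenever $p\in K$ is Poisson stable, and, separately, that the set of points reachable from any given point at any given \emph{positive} time is open in $\R^d$. For the first claim, pick $t_j\to+\infty$ with $T_{t_j}(p)\to p$, run the uncontrolled flow of $V$ from $p$ on $[0,t_j-h]$, and then apply on the short window $[t_j-h,t_j]$ a \emph{constant} control $w$ with $|w|<\delta$; the resulting endpoint equals $T_{t_j}(p)+hw+e(w)$ with $|e(w)|\le h\,\omega_V(Ch)$, where $C$ depends only on $\|V\|_\infty,\delta$, and $\omega_V$ is a modulus of continuity of $V$ on a fixed compact set $\mathbb{K}\supset K$ determined by $K,\delta,V$; crucially this estimate is uniform in $w$ and, for $j$ large, in $j$. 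Taking $h=h_0$ with $\omega_V(Ch_0)<\delta/4$, a topological degree argument gives $\{T_{t_j}(p)+h_0w+e(w):|w|<\delta\}\supset B_{3h_0\delta/4}(T_{t_j}(p))$, and since $T_{t_j}(p)\to p$ this ball contains $B_{\rho_0}(p)$ for $j$ large with $\rho_0:=3h_0\delta/8$, a radius independent of the particular $p\in K$. The second claim follows from the same perturbation of the last window of an arbitrary admissible control, using essentially that controls in $U_\delta$ obey the \emph{strict} bound $\|u\|_\infty<\delta$, so that there is always room to add a small constant. (I use here only standard ODE facts: global existence of the controlled trajectory since $V+u$ is bounded, and its continuous dependence on $w$.)

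\textbf{Step 2 (the case $x_0$ Poisson stable).} Assume $x_0$ is Poisson stable, set $\A:=\A(x_0,U_\delta)$, $A:=\A\cap M$, $P:=\{$Poisson stable points of $M\}$, and recall that $\A(\cdot,U_\delta)$ is transitive (concatenate controls) and forward invariant (append zero control; $M$ is invariant). First, $A$ is open in $M$: a point $z\in A$ either is recurrent --- then Step~1 together with transitivity puts a whole ball around $z$ inside $\A$ --- or else is reached from $x_0$ at a positive time --- then the second part of Step~1 does the same --- the only other possibility, reaching $z$ at time $0$, forces $z=x_0$, which is recurrent. Consequently $P\cap A$, being $P$ intersected with a relatively open set, is dense in $A$. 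Next, $\overline A$ is invariant and open in $M$: given $z\in\overline A$, choose $p_n\in P\cap A$ with $p_n\to z$; picking $s_k\to+\infty$ with $T_{s_k}(p_n)\to p_n$, for $k$ large $T_{s_k-t}(p_n)\in\A(p_n,U_\delta)\cap M\subset A$ (by transitivity and invariance of $M$), so $T_{-t}(p_n)=\lim_k T_{s_k-t}(p_n)\in\overline A$ and hence $T_{-t}(z)=\lim_n T_{-t}(p_n)\in\overline A$ (forward invariance of $\overline A$ being automatic); moreover $B_{\rho_0}(p_n)\subset\A(p_n,U_\delta)\subset\A$ by Step~1, so $B_{\rho_0/2}(z)\cap M\subset A\subset\overline A$ once $|p_n-z|<\rho_0/2$. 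Thus $\overline A$ is a nonempty relatively clopen invariant subset of $M$; since $M$ is not a disjoint union of two nonempty closed invariant sets, $\overline A=M$, i.e.\ $A$ is dense in $M$. Finally $A=M$: for any $z\in M$ pick $p_n\in P\cap A$ with $p_n\to z$ (now possible, $P\cap A$ being dense in $M$); then for $n$ large $z\in B_{\rho_0}(p_n)\subset\A(p_n,U_\delta)\subset\A(x_0,U_\delta)$, whence $z\in A$. This gives $M\subset\A(x_0,U_\delta)$.

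\textbf{Step 3 (general $x_0\in M$).} If $x_0$ is an equilibrium it is Poisson stable and Step~2 applies. Otherwise choose $s>0$ with $T_s(x_0)\neq x_0$; the set of points reachable from $x_0$ at time $s$ is open (Step~1) and contains $T_s(x_0)\in M$, so it meets the dense set $P$ at some $p^*\in\A(x_0,U_\delta)$. Applying Step~2 to $p^*$ in place of $x_0$ (the hypotheses on $M$ are unaffected) yields $M\subset\A(p^*,U_\delta)$, and transitivity gives $M\subset\A(p^*,U_\delta)\subset\A(x_0,U_\delta)$, as claimed.
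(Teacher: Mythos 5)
Your overall architecture is sound and essentially parallels the paper's: show that $\A(x_0,U_\delta)\cap M$ is nonempty, relatively open, relatively closed (equivalently, invariant under the flow, using density of Poisson stable points to go backwards along trajectories), and conclude by indecomposability; Steps 2--3 of your argument are fine modulo Step 1. The genuine gap is in Step 1, and it sits exactly at the point the proposition is designed to be delicate about. You build the local covering $B_{\rho_0}(p)\subset\A(p,U_\delta)$ (and the openness of the positive-time reachable set) by a degree argument applied to the endpoint map $w\mapsto T_{t_j}(p)+h_0w+e(w)$, and you justify its continuity by ``continuous dependence on $w$'' as a standard ODE fact. But under the hypotheses of the proposition $V$ is only continuous, and uniqueness is assumed \emph{only} for the uncontrolled equation~\eqref{eq_odeaut1}; the controlled equations $\dot y=V(y)+w$ (or $\dot y=V(y)+u(t)+\alpha$ in your second claim) need not have unique solutions, so there is no well-defined, let alone continuous, endpoint map, and the homotopy/degree argument as written does not apply. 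This is not a decorative generality: Remark~\ref{rem_trans1} explicitly stresses that the proposition requires neither local Lipschitz continuity nor uniform continuity of $V$, so a proof of the stated result cannot lean on uniqueness or continuous dependence for the controlled system.

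The paper avoids this by never solving a controlled ODE at all: on the last window it \emph{prescribes} the trajectory as the solution of $\dot x_\alpha=V(z)+u(t)+\alpha$ (a right-hand side independent of the state, so $x_\alpha$ is explicit and its endpoint is exactly affine in $\alpha$, covering a ball with no degree theory), and then reads off the admissible control $u_\alpha(t)=V(z)-V(x_\alpha(t))+u(t)+\alpha$ realizing it, the bound $\|u_\alpha\|_\infty<\delta$ coming from the local modulus of continuity of $V$ near $z$. Your Step 1 can be repaired the same way (replace ``constant control $w$, then solve'' by ``explicit near-straight trajectory, then define the feedback-type control''), or alternatively by smoothing $V$ locally and absorbing the uniformly small smoothing error into the control; with either fix the rest of your proof goes through. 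As written, however, the key local step rests on a continuity assertion that is false in the generality claimed.
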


\begin{proof}
Fix an arbitrary $x_0\in M$ and $\delta>0$.
Consider the set
\[
M_{x_0,\delta}:= M\cap \A(x_0, U_\delta)
%M\cap \bigcup_{\sigma <\delta} \A(x_0, U_\sigma)
\]
of all points $y\in M$
accessible from  $x_0$ using controls in
$U_\delta$.
% $U_\sigma$ with
% any $0\leq \sigma<\delta$.
Clearly, $M_{x_0,\delta}\neq\emptyset$
because $\A(x_0, U_0)\subset M_{x_0,\delta}$.
We now prove several consecutive claims.

{\sc Step 1}. We show that $M_{x_0,\delta}$
is relatively open in $M$. In fact, $z\in M_{x_0,\delta}$
%for a
%$\sigma<\delta$
means that
$z=x(s)$ for an $s>0$ and some trajectory $x(\cdot)$ of~\eqref{eq_odectrl1}
with $x(0)=x_0$ and
%$\|u\|_\infty \leq \sigma$.
 $\|u\|_\infty < \delta$.

%%%%%%%%%%%%%%%%%%%%%%%%%%%%%% VERY NEW 20.01.19 %%%%%%%%%%%%%%%%%%%%%%%%%%%

Choose now an $\varepsilon>0$ so small that
\begin{equation}\label{eq_corr1}
|V(y)-V(z)|< \frac{\delta - \|u\|_\infty}{4},
\end{equation}
for all $y\in B_\varepsilon(z)$, and a $\tau\in (0,s)$ (depending on $\varepsilon$) so small that
\begin{eqnarray}
\label{eq_corr3}
(\|V\|_\infty +\delta)\tau< \varepsilon/2,\\
\label{eq_corr4}
|x(s-\tau)-z|<\varepsilon/2.
\end{eqnarray}
Letting $\bar x(\cdot)$ over $[s-\tau, s]$ stand for the trajectory
of the ODE $\dot{\bar x}(t)= V(z)+u(t)$ satisfying $\bar x (s-\tau)=x(s-\tau)$,
we get that both
$\bar x(t)\in B_\varepsilon(z)$ and $x(t)\in B_\varepsilon(z)$ for all
$t\in [s-\tau, s]$ due to~\eqref{eq_corr3}  and~\eqref{eq_corr4}.
Therefore, from~\eqref{eq_corr1} we get
\begin{equation}\label{eq_corr1a}
|z-\bar x(s)|=|x(s)-\bar x(s)| < \tau \frac{\delta - \|u\|_\infty}{4}.
\end{equation}

For an $\alpha\in \R^d$ denote by $x_\alpha(\cdot)$ over $[s-\tau, s]$ the trajectory
of the ODE
\[
\dot{x}_\alpha(t)= V(z)+u(t)+\alpha
\]
 satisfying
$x_\alpha(s-\tau)= x(s-\tau)$. We have then
that
\[
x_\alpha(t) = \bar x(t) +\alpha t,
\]
and in particular,
\[
\left \{x_\alpha(s)\colon |\alpha|<  \frac{\delta - \|u\|_\infty}{2}\right\}
= B_r (\bar x(s)),\quad \mbox{where}\quad
r:= \tau \frac{\delta - \|u\|_\infty}{2},
\]
which contains $z$ and hence an open ball centered in $z$ in view of~\eqref{eq_corr1a}.
It is enough then to set
\[
u_\alpha(t):=
\left\{
\begin{array}{rl}
u(t), & t\in [0, s-\tau],\\
V(z)- V(x_\alpha(t)) +u(t) +\alpha, & t\in (s-\tau, s],
\end{array}
\right.
\]
to get
that $x_\alpha$ is a trajectory of~\eqref{eq_odectrl1} over $[0,s]$
(with $u_\alpha$ instead of $u$).
Since~\eqref{eq_corr3} implies that also $x_\alpha(t)\in B_\varepsilon(z)$
for all $t\in [s-\tau, s]$, then from the definition of $u_\alpha$ we obtain
for $|\alpha|< (\delta-\|u\|_\infty)/2$ the estimate
\begin{align*}
|u_\alpha(t)|& \leq |u(t)|+ |\alpha|+ |V(z)- V(x_\alpha(t))|\\
& \leq
\|u\|_\infty + \frac{\delta - \|u\|_\infty}{2} + \sup_{y\in B_\varepsilon(z)} |V(y)-V(z)|\\
& \leq \|u\|_\infty + \frac{\delta - \|u\|_\infty}{2} +
\frac{\delta - \|u\|_\infty}{4} \quad\mbox{by~\eqref{eq_corr1}}
\\
& < \delta,
\end{align*}
which proves the claim.

{\sc Step 2}. We show
\begin{equation}\label{eq_Mdinv1}
\overline{T_s(M_{x_0,\delta})}=M_{x_0,\delta}\quad \mbox{for all $s\in \R^+$}.
\end{equation}
To show
$\overline{T_s(M_{x_0,\delta})}\subset M_{x_0,\delta}$,
observe that
$T_s(M_{x_0,\delta})\subset M_{x_0,\delta}$, and therefore, as proven on Step~1,
 $M_{x_0,\delta}$ contains a relatively open neighborhood of $T_s(M_{x_0,\delta})$, which implies
the desired inclusion for the closure.

To prove the converse inclusion
$M_{x_0,\delta}\subset \overline{T_s(M_{x_0,\delta})}$,
we note that the set of Poisson stable points belonging to $M_{x_0,\delta}$
is dense in  $M_{x_0,\delta}$, because the latter set is relatiely open in $M$ by
Step~1.
Take now an arbitrary Poisson stable point
$p\in M_{x_0,\delta}$,
so that
$p=\lim_k x(t_k)$ where $x(\cdot)$ is a solution of~\eqref{eq_odeaut1} with
$x(0)=p$, %where $p_k\in T_{t_k}(p)$
for some sequence $t_k \to +\infty$.
Clearly $x(t)\in M_{x_0,\delta}$ for every $t\in\R^+$, and hence
$q_k:=x(t_k-s)\in M_{x_0,\delta}$. By construction
$x(t_k)\in T_s(q_k)\subset T_s(M_{x_0,\delta})$, and hence
$p\in \overline{T_s(M_{x_0,\delta})}$. The inclusion being proven
follows then from density of Poisson stable points in $M_{x_0,\delta}$.

{\sc Step 3}. As a result of Step~2 we have that
$M_{x_0,\delta}$ is closed.
Therefore,
%$M_0\subset M_{x_0,\delta}$,
$M\subset M_{x_0,\delta}$
since otherwise disjoint sets
%$M_0\bigcap M_{x_0,\delta}$ and
%$M_0\setminus M_{x_0,\delta}$
$M\bigcap M_{x_0,\delta}$ and
$M\setminus M_{x_0,\delta}$
are both closed, nonempty and %positively
invariant for $T_t$
for all $t\in \R^+$ contrary to the assumption.
\end{proof}

\begin{remark}\label{rem_trans1}
Note that the above Proposition~\ref{prop_trans1} does not require $V$ to be locally Lipschitz, nor even uniformly continuous.
% However, it remains true if $V$ is assumed locally Lipschitz
% and not necessarily uniformly continuous; in fact,
% in Step~1 of its proof it is enough to have $\tilde V$ which is
% locally Lipschitz over $\R^d$ and continuously differentiable over
% $B_{2R}(x_0)$. To construct such an approximation of $V$ we first construct a
% continuously differentiable approximation $V_1$ of the latter by convolution
% with a smooth approximate identity with  $\|(V-V_1)\mathbf{1}_{B_{2R}(x_0)}\|_\infty < \varepsilon$, $\|V_1\|_\infty\leq \|V\|_\infty$, and then
% set $\tilde V:= V_1 \eta_R + V (1-\eta_R)$ where $\eta_R\colon \R^d\to [0,1]$ is
% a smooth function which iz identically one over $B_R(x_0)$ and vanish outside
% of $B_{2R}(x_0)$.
\end{remark}

\section{Controllability and closing lemma}

\subsection{Poincar\'{e} recurrence theorem for not necessarily finite measures}

We need the following generalization of the Poincar\'{e} recurrence theorem, which we formulate for injective maps, since this will be more adapted to the use in the sequel.

\begin{proposition}\label{prop_Poincare1}
Let $T\colon\R^d \to \R^d$ be an injective Borel map preserving a $\sigma$-finite Borel measure $\mu$.
Suppose also that
\[
\mu\left(\bigcup_{k=0}^n T^k(B_\rho(0))\right)=o(n) \quad \mbox{as $n\to +\infty$}
\]
for every $\rho>0$ fixed.
Then for every open $U\subset \R^d$ such that $\mu(U)>0$ one has
$T^n(U) \bigcap U \neq \emptyset$ for a subsequence of $n\in \N$ (depending on $U$).
\end{proposition}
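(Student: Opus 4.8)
The plan is to argue by contradiction, following the skeleton of the classical proof of the Poincar\'e recurrence theorem but replacing the step ``sum the measures of an infinite disjoint family of wandering sets'' — unavailable here since $\mu(\R^d)$ may be infinite — by a comparison with the growth hypothesis. So I would suppose that some open $U$ with $\mu(U)>0$ is ``eventually wandering'', i.e.\ that $T^n(U)\cap U\neq\emptyset$ for only finitely many $n$, and fix an integer $n_0\geq 1$ with $T^n(U)\cap U=\emptyset$ for all $n\geq n_0$. Since $U=\bigcup_{\rho\in\N}(U\cap B_\rho(0))$ is an increasing union, continuity of $\mu$ from below lets me choose $\rho>0$ with $c:=\mu(U_\rho)>0$, where $U_\rho:=U\cap B_\rho(0)$; thus $U_\rho$ is a positive-measure piece sitting inside a ball whose forward iterates the hypothesis controls.

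The heart of the argument is to show that the iterates $\{T^{jn_0}(U_\rho)\}_{j\geq 0}$ form a pairwise disjoint family, each member of measure $c$. Disjointness is where injectivity of $T$ enters: for $0\le j<k$, since $T^{jn_0}$ is injective one has $T^{jn_0}(X)\cap T^{jn_0}(Y)=T^{jn_0}(X\cap Y)$, hence
\[
T^{jn_0}(U_\rho)\cap T^{kn_0}(U_\rho)=T^{jn_0}\!\big(U_\rho\cap T^{(k-j)n_0}(U_\rho)\big)\subset T^{jn_0}\!\big(U\cap T^{(k-j)n_0}(U)\big)=\emptyset,
\]
because $(k-j)n_0\ge n_0$. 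That each iterate has measure exactly $c$ also uses injectivity: each $T^{jn_0}(U_\rho)$ is a Borel set (image of a Borel set under an injective Borel map) with $(T^{jn_0})^{-1}(T^{jn_0}(U_\rho))=U_\rho$, and $T$-invariance of $\mu$ gives $\mu((T^{m})^{-1}(B))=\mu(B)$ for every Borel $B$ and every $m$, whence $\mu(T^{jn_0}(U_\rho))=\mu(U_\rho)=c$.

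Finally I would feed this into the growth hypothesis. For each $N\in\N$ and each $0\le j\le N$ we have $T^{jn_0}(U_\rho)\subset T^{jn_0}(B_\rho(0))\subset\bigcup_{k=0}^{Nn_0}T^k(B_\rho(0))$, so by disjointness and the computation of measures,
\[
\mu\Big(\bigcup_{k=0}^{Nn_0}T^k(B_\rho(0))\Big)\ge\sum_{j=0}^{N}\mu\big(T^{jn_0}(U_\rho)\big)=(N+1)c .
\]
On the other hand the hypothesis applied with $n=Nn_0$ forces the left-hand side to be $o(Nn_0)=o(N)$ as $N\to\infty$; since $c>0$ this contradicts the lower bound $(N+1)c$, which proves the claim.

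I expect the only genuinely delicate points to be: (i) the structural observation that, with $\mu$ infinite, one cannot use the full orbit of $U_\rho$ but must pass to the subsequence of $n_0$-th iterates, precisely so that the disjoint family stays inside the sets $\bigcup_{k\le n}T^k(B_\rho(0))$ on which the hypothesis ``bites''; and (ii) the measurability bookkeeping for the forward images $T^k(B_\rho(0))$ and $T^{jn_0}(U_\rho)$, which are Borel because injective Borel maps of $\R^d$ send Borel sets to Borel sets (and which is in any case implicit already in the statement of the growth hypothesis). Everything else — continuity of $\mu$ from below, monotonicity and finite additivity, the elementary $o$-estimate — is routine.
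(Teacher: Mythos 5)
Your proof is correct and takes essentially the same route as the paper's: restrict to a ball, use injectivity plus $T$-invariance of $\mu$ to produce a pairwise disjoint family of iterates of equal positive measure along multiples of a fixed step, all contained in $\bigcup_{k\le n}T^k(B_\rho(0))$, and contradict the $o(n)$ growth hypothesis. The only (harmless) difference is one of packaging: the paper fixes $n$ and shows directly that the set $U_n$ of points of $U$ never returning after time $n$ is $\mu$-null (a slightly stronger fact, which is what Corollary~\ref{co_Poincare1} really exploits), whereas you run the same disjointness/growth comparison as a contradiction argument applied to $U\cap B_\rho(0)$ itself.
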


\begin{remark}\label{rem_Poincare1}
The conditions of Proposition~\ref{prop_Poincare1} are in particular satisfied
in the following cases which are of practical importance:
\begin{itemize}
\item[(i)] $\mu$ is finite (this recovers the classical statement of the Poincar\'{e} recurrence theorem;
\item[(ii)]
%for all $n\in \N$ and $\rho\geq 0$
%and
for
some function $\kappa\colon \R^+\to \R^+$
one has
\begin{align*}
\mu(B_\rho(0)) = o(\kappa(\rho))&\quad \mbox{as $\rho\to +\infty$},\\
T^n(B_\rho(0))\subset B_R(0) & \quad \mbox{with
$R=R(\rho,n)>0$, $R(\rho,\cdot)$ nondecreasing}\\
& \quad \mbox{and
$\kappa(R)=O(n)$ as $n\to +\infty$ for every $\rho>0$ fixed};
\end{align*}
\item[(iii)]
$|T^n(x)|\leq A|x|+Bn$ for some $A\geq 0$, $B\geq 0$, and
$\mu(B_\rho(0))= o(\rho)$ as $\rho\to +\infty$, as can be seen from
taking $\kappa$ linear in~(ii).
\end{itemize}
\end{remark}

\begin{proof}
Since it is enough to prove the statement for each $U\cap B_\rho(0)$, for an
arbitrary $\rho>0$, we may assume without loss of generality that $U\subset B_\rho(0)$
for some $\rho>0$.
Fix an arbitrary $n\in \N$ and
consider the set
\[
U_n:=
\{x\in U\colon T^k(x) \notin U \quad \mbox{for all $k\geq n$}\}.
\]
Let $F:= T^n$ and since $F^k$ are injectve and preserve $\mu$, we get
\[
\mu(F^k(U))=\mu(F^{-k}(F^k(U)))=\mu(U)
\]
for all $k\in \N$.
Clearly, $\{F^k(U_n)\}_{k\in \N}$ are pairwise disjoint, since otherwise one would have
$D:=F^m(U_n)\cap F^i(U_n)\neq \emptyset$ for some $m>i\geq n$, and hence, using again injectivity of $F^k$, we would get
\[
(F^i)^{-1}(D)= F^{m-i}(U_n)\cap U_n =T^{(m-i)n}(U_n)\cap U_n\neq
\emptyset
\]
contradicting the definition of $U_n$.
Then
\begin{align*}
 m\mu(U_n)&=\mu\left(\bigcup_{k=0}^{m-1} F^k(U_n)\right)\quad
\mbox{because $F^k(U_n)$ are pairwise disjoint}\\
& \leq \mu\left(\bigcup_{k=0}^{(m-1)n} T^k(U_n)\right) =o(m)
 \end{align*}
as $m\to \infty$,
which implies $\mu(U_n)=0$.
\end{proof}

% \begin{proof}
% Since it is enough to prove the statement for each $U\cap B_\rho(0)$, for an
% arbitrary $\rho>0$, we may assume without loss of generality that $U\subset B_\rho(0)$
% for some $\rho>0$.
% First we show that $\mu(U\cap T^{-m}(U))>0$ for some $m>0$, if $\mu(U)>0$.
% In fact,
% $\mu(T^{-m}(U))= \mu(U)$ for all $m\in \N$,
% and hence %there is a $k\in N$ such that
% the sets
% $T^{-j}(U)$ cannot be all pairwise disjoint,
% %for every $j\geq k$,
% because otherwise %(if all $T^{-j}(U)$ are pairwise disjoint),
% we would have
% \begin{align*}
%  m\mu(U)&=\mu\left(\bigcup_{j=0}^{m-1} T^{-j}(U)\right) = o(m)
%  \end{align*}
% as $m\to \infty$, which is a contradiction when $\mu(U)>0$.
% Therefore there are two different indices, $i$ and $j$, say, $j>i$ for
% definiteness, such that
% \[
% 0< \mu(T^{-j}(U)\cap T^{-i}(U))
% \]

% Fix an arbitrary $n\in \N$ and
% consider the set
% \[
% U_n:=
% \{x\in U\colon T^k(x) \notin U \quad \mbox{for all $k\geq n$}\}.
% \]
% Let $F:= T^n$ and recall that $\mu(F^k(U))=\mu(U)$ for all $k\in \N$.
% Then
% \begin{align*}
%  m\mu(U_n)&=\mu\left(\bigcup_{k=0}^{m-1} F^k(U_n)\right)\quad
% \mbox{because $F^k(U_n)$ are pairwise disjoint}\\
% & \leq \mu\left(\bigcup_{k=0}^{(m-1)n} T^k(U_n)\right) =o(m)
%  \end{align*}
% as $m\to \infty$,
% which implies $\mu(U_n)=0$.
% \end{proof}

The following corollary gives a ``continuous'' version of this statement
adapted to our setting.

\begin{corollary}\label{co_Poincare1}
Let $V\colon\R^d \to \R^d$ be a bounded locally
Lipschitz vector field, and
$\mu$ be a $\sigma$-finite Borel measure invariant with respect to the flow $\varphi_V^t$ induced by $V$ and
satisfying $\mu(B_R(0))= o(R)$ as $R\to +\infty$.
Then for every open
$U\subset \R^d$ and for every $\tau>0$ one has
\[
\mu\left(
\left\{
x\in U\colon \varphi_V^t (x)\not\in U\mbox{ for all }t\geq \tau
\right\}
\right)=0.
\]
\end{corollary}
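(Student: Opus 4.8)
The plan is to obtain this statement from the discrete Poincar\'e recurrence of Proposition~\ref{prop_Poincare1} applied to the time-$\tau$ map $T:=\varphi_V^\tau$. Since $\R^d=\bigcup_{\rho\in\N}B_\rho(0)$ and
\[
\left\{x\in U\colon \varphi_V^t(x)\notin U\ \text{for all }t\ge\tau\right\}\cap B_\rho(0)
\subseteq
\left\{x\in U\cap B_\rho(0)\colon \varphi_V^t(x)\notin U\cap B_\rho(0)\ \text{for all }t\ge\tau\right\},
\]
by countable subadditivity (or monotone convergence along the increasing union) it suffices to treat the case of a bounded $U$, say $U\subseteq B_\rho(0)$.

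I would then verify that $T=\varphi_V^\tau$ fulfils the hypotheses of Proposition~\ref{prop_Poincare1}. As $V$ is bounded and locally Lipschitz, the flow $\varphi_V^t$ is globally defined and, for every fixed $t$, the map $\varphi_V^t$ is a homeomorphism of $\R^d$ (injective with inverse $\varphi_V^{-t}$, continuous in $x$ by continuous dependence on initial data); in particular $T$ is an injective Borel map, and it preserves $\mu$ since $\mu$ is invariant under the whole flow. For the growth condition, put $L:=\|V\|_\infty$; integrating~\eqref{eq_odeaut1} gives $|\varphi_V^t(x)-x|\le L|t|$ for all $x\in\R^d$ and $t\in\R$, whence $T^k(B_\rho(0))=\varphi_V^{k\tau}(B_\rho(0))\subseteq B_{\rho+Lk\tau}(0)$ and, using $\mu(B_R(0))=o(R)$,
\[
\mu\left(\bigcup_{k=0}^{n}T^k(B_\rho(0))\right)\le\mu\bigl(B_{\rho+Ln\tau}(0)\bigr)=o(\rho+Ln\tau)=o(n)\qquad\text{as }n\to+\infty
\]
(since $\rho+Ln\tau=O(n)$; this is precisely case~(iii) of Remark~\ref{rem_Poincare1}).

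Finally I would run the argument in the proof of Proposition~\ref{prop_Poincare1} with $n=1$: for $F=T$ it shows that the sets $\{F^k(U_1)\}_{k\in\N}$, where $U_1:=\{x\in U\colon T^k(x)\notin U\ \text{for all }k\ge1\}$, are pairwise disjoint and contained in $B_\rho(0)$, so $m\,\mu(U_1)=\mu\bigl(\bigcup_{k=0}^{m-1}T^k(U_1)\bigr)\le\mu\bigl(\bigcup_{k=0}^{m-1}T^k(B_\rho(0))\bigr)=o(m)$, forcing $\mu(U_1)=0$. Since $k\tau\ge\tau$ for every integer $k\ge1$, one has the inclusion
\[
\left\{x\in U\colon \varphi_V^t(x)\notin U\ \text{for all }t\ge\tau\right\}\subseteq
\left\{x\in U\colon \varphi_V^{k\tau}(x)\notin U\ \text{for all }k\in\N,\ k\ge1\right\}=U_1,
\]
and the left-hand set is Borel (being $U$ intersected with the closed set $\bigcap_{t\ge\tau}(\varphi_V^t)^{-1}(U^c)$), so its $\mu$-measure vanishes. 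The only points needing a little care are the measurability of the set under consideration and the fact that one must read off from the proof of Proposition~\ref{prop_Poincare1} the slightly sharper conclusion $\mu(U_1)=0$ rather than just the stated nonemptiness of some $T^n(U)\cap U$; the passage from all real $t\ge\tau$ to the integer times $k\tau$ is harmless because only the easy inclusion above is required.
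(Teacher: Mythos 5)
Your argument is correct and is essentially the paper's own proof: apply Proposition~\ref{prop_Poincare1} to the time-$\tau$ map $T=\varphi_V^\tau$, the growth hypothesis being verified exactly as in Remark~\ref{rem_Poincare1}(iii) via $|\varphi_V^t(x)|\le |x|+\|V\|_\infty t$. Your extra care in reducing to $U\subset B_\rho(0)$ and in extracting the sharper conclusion $\mu(U_1)=0$ from the \emph{proof} of the proposition (rather than its stated conclusion) is precisely what the paper leaves implicit; the only blemish is your parenthetical claim that the sets $T^k(U_1)$ are contained in $B_\rho(0)$, which is false but harmless, since your displayed estimate only uses $T^k(U_1)\subseteq T^k(B_\rho(0))$.
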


\begin{proof}
Fix an arbitrary $\tau>0$ and let $T:=\varphi_V^\tau$.
The statement follows from Proposition~\ref{prop_Poincare1} together with Remark~\ref{rem_Poincare1}(iii) (with $A:=1$, $B:= \|V\|_\infty$, since
$\|\varphi_V^t(x)\|\leq |x|+ \|V\|_\infty t$ for $t\geq 0$).
\end{proof}

\begin{remark}\label{rem_Poincare2}
The above Corollary~\ref{co_Poincare1}
remains valid for locally Lipschitz but  not necessarily bounded
vector fileds $V$ satisfying one-sided Osgood estimate
\[
\sup_{|x|\le r} \frac{V(x)\cdot x}{|x|}\leq K(r)
\]
for all $r>0$
and for some %continuous
measurable
function $K\colon \R^+\to \R^+$
such that
\[
\int_0^\infty \frac{\,ds}{K(s)} =+\infty,
\]
under the condition
\[
\mu(B_\rho(0))=o\left(\int_0^\rho \frac{\,ds}{K(s)}\right),
\]
as $\rho\to \infty$.
In fact, denoting
\[
\kappa(\rho):=\int_0^\rho \frac{\,ds}{K(s)},
\]
we get that $\kappa$ is strictly increasing with $\kappa(t)\to +\infty$ as $t\to +\infty$. Furthermore,
\begin{equation}\label{eq_estkappa1}
|\varphi_V^t(x)|\le R(|x|,t):=\kappa^{-1}(t+\kappa(|x|)),
\end{equation}
because
for $r(t):=|x(t)|$, $x(t):=\varphi_V^t(x)$ one has
\[
\dot{r}(t)=\frac{V(x(t))\cdot x(t)}{r(t)}\leq K(r(t)),
\]
and integrating the latter inequality one gets
$\kappa(r(t))-\kappa(r(0))\le t$, which implies~\eqref{eq_estkappa1}.
Thus, $\varphi_V^t(B_\rho(0)) \subset B_{R(\rho,t)}(0)$ with
$\kappa(R(\rho,t))=O(t)$ as $t\to+\infty$, and
$\mu(B_\rho(0))=o\left(\kappa(\rho)\right)$ as $\rho\to +\infty$, and hence
in the proof of Corollary~\ref{co_Poincare1} it is enough to
refer to Remark~\ref{rem_Poincare1}(ii) instead of
Remark~\ref{rem_Poincare1}(iii).
\end{remark}

We will further use also the following statement.

\begin{corollary}\label{co_Poincare3}
Under conditions of Corollary~\ref{co_Poincare1} (or, more generally, of the Remark~\ref{rem_Poincare2}) one has that Poisson stable points of $V$ are dense
in $\supp\,\mu$, and in particular, all the points of  $\supp\,\mu$ are nonwandering for the flow generated by $V$.
\end{corollary}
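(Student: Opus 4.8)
The plan is to prove the (formally stronger) statement that $\mu$-almost every point of $\R^d$ is forward Poisson stable for the flow $\varphi_V^t$; the two assertions of the corollary then follow at once. Fix a countable basis $\{U_i\}_{i\in\N}$ of the topology of $\R^d$, say the balls with rational centres and radii, and keep only those indices $i$ with $\mu(U_i)>0$. For each such $i$ and each integer $n\ge1$, Corollary~\ref{co_Poincare1} (applied with $U:=U_i$ and $\tau:=n$; or, under the hypotheses of Remark~\ref{rem_Poincare2}, the version of it proven there) tells us that
\[
N_{i,n}:=\bigl\{x\in U_i\colon \varphi_V^t(x)\notin U_i\ \text{for all}\ t\ge n\bigr\}
\]
is $\mu$-negligible. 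Hence the countable union $N:=\bigcup_{i,n}N_{i,n}$ satisfies $\mu(N)=0$.

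Next I would verify that every $x\in\supp\mu\setminus N$ is forward Poisson stable. Let $W$ be an arbitrary neighbourhood of $x$ and $\tau>0$. Choose a basic set $U_i$ with $x\in U_i\subset W$; since $x\in\supp\mu$ we have $\mu(U_i)>0$, so this index $i$ was retained, and since $x\notin N_{i,\lceil\tau\rceil}$ there exists $t\ge\lceil\tau\rceil\ge\tau$ with $\varphi_V^t(x)\in U_i\subset W$. As $W$ and $\tau$ are arbitrary, $x\in\omega_x$. Because $\mu$ is a Radon measure on a second countable space, $\mu(\R^d\setminus\supp\mu)=0$, so $\mu$-a.e.\ point lies in $\supp\mu\setminus N$ and is therefore Poisson stable.

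For density in $\supp\mu$: if $U$ is open with $U\cap\supp\mu\ne\emptyset$, then $\mu(U)>0$, hence $\mu(U\setminus N)=\mu(U)>0$, and since $\mu$ is concentrated on $\supp\mu$ this forces $U\cap\supp\mu\setminus N\ne\emptyset$; thus the Poisson stable points form a dense subset of $\supp\mu$. The final assertion is then immediate: as noted right after Definition~\ref{def_poisson1}, the nonwandering set is closed and contains the closure of the set of Poisson stable points, and that closure contains $\supp\mu$. The only point demanding any care is the bookkeeping of the quantifiers — replacing ``all open $U$, all $\tau$'' by the countable family $\{(U_i,n)\}$ and using the definition of $\supp\mu$ to guarantee that Corollary~\ref{co_Poincare1} is applicable to the sets $U_i$ in question — but beyond that the argument is routine.
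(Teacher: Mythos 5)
Your proof is correct and follows essentially the same route as the paper's: a countable basis, the null sets given by Corollary~\ref{co_Poincare1} (your $N_{i,n}$ merely make explicit the countable union over thresholds that the paper leaves implicit in its sets $N_j$), Poisson stability of points outside the resulting null set, and then nonwandering via the remark after Definition~\ref{def_poisson1}. No gaps to report.
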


\begin{proof}
We repeat the main idea of the part~1 of proposition~4.1.18 from~\cite{KatHass95}.
Let $\{U_j\}_{j\in \N}$ be a countable base of the topology in $\R^d$.
For every $j\in \N$ we consider
\begin{align*}
N_j &:=
\{ x\in U_j:  \varphi_V^t(x) \notin U_j \, \mbox{for all $t \ge T_0$ and for some
$T_0=T_0(x)\in \R$}
\}, \\
R&:= \bigcap_j N_j^c.
\end{align*}
In other words, $N_j$ is the set of all points of $U_j$, the
iterations of which leave this set forever, while for every $x\in R\cap U_j$ there is a  sequence $t_k\to \infty$ such that $\varphi^{t_k}_V(x) \in U_j$.
By Corollary~\ref{co_Poincare1}, $\mu(N_j)=0$ for all $j\in \N$, and thus
$\mu(R^c)=0$, which implies density of $R$
in $\supp\, \mu$.

But if $x\in R$, then for any neighborhood $U$ of $x$ there is a
$U_j\subset U$ such that $x\in U_j$, and hence
for
a sequence $t_k\to +\infty$ one has $\varphi_V^{t_k}(x)\in U_j\subset U$
for sufficiently large $k\in \N$.
This means that the $\omega$-limit set $\omega_x$ of the point $x$ intersects with $U$. Since the set $\omega_x$ is closed and the neighborhood $U$ is arbitrary, we have $x\in \omega_x$ concluding the proof.
\end{proof}

\subsection{Correcting the vectorfield}

The basic idea of our construction is as follows.
Supposing that the Lebesgue measure over $\R^d$ is invariant under the
flow of the vector field $V$, we will ``correct'' the latter by
adding a new vector filed $W$ (referred later as \emph{corrector} such that the sum $V+W$ satisfy Corollary~\ref{co_Poincare1}  (or, more generally, of the Remark~\ref{rem_Poincare2}), and hence
also Corollary~\ref{co_Poincare3} for some new $\sigma$-finite measure $\mu$
with $\supp\,\mu=\R^d$ which will also be explcitly constructed. This will immediately lead to the proof of Burago-Ivanov-Novikov controllability theorem
once one shows that $\|W\|_\infty$ may be made arbitrarily small. We will further
show that this construction is in fact deeper and provides, for instance, a version of the Pugh closing lemma.

To fulfill this program define a positive function
\begin{equation}\label{eq_defmu1}
\psi(x):= (|x|^2+\alpha^2)^{-p},
\end{equation}
where $\alpha$ and $p$ are positive parameters to be defined later.
We will define a smooth map $W\colon \R^d\to \R^d$
depending on $\alpha$ and $p$ will so that
the measure $\mu:=\psi\,dx$ be invariant under the flow
of the perturbed system of ODEs
\begin{equation}\label{eq_ODEpert1}
\dot x=V(x)+W(x).
\end{equation}
% We will further show that with the appropriate choice of the parameters $t$, $p$ and $q$ one can make
% simultaneously $W$ arbitrarily small in supremum (and even in $C^1$) norm, and the measure $\mu$ to satisfy
% conditions of the generalized Poincar\'{e} lemma (Proposition~\ref{prop_Poincare1}). This will lead both to the controllability result and to the version of the Pugh closing lemma.
Making $\mu$ invariant with respect to the flow of~\eqref{eq_ODEpert1} amounts to making
\begin{equation}\label{eq_defWinv1a}
\mbox{div}\, \psi(V+W)=0
\end{equation}
or equivalently, recalling that $V$ is incompressible,
\begin{equation}\label{eq_defW1}
\mbox{div}\, (\psi W)= -\nabla \psi\cdot V
\end{equation}
in the weak sense. Letting $u$ stand for a solution to the Poisson equation
\[
%\begin{equation}\label{eq_Poisson1}
-\Delta u= \nabla \psi\cdot V
%\end{equation}
\]
in $\R^d$, we get that~\eqref{eq_defW1} is satisfied with
\begin{equation}\label{eq_defW2}
W:= \frac{1}{\psi}\nabla u.
\end{equation}
We may thus take $u$ of the form
\[
u:= \Phi*(\nabla \psi\cdot V),
\]
where $\Phi$ stands for the fundamental solution of the Laplace equation in $\R^d$, so that~\eqref{eq_defW2} reduces then to
\begin{equation}\label{eq_defW3}
\begin{aligned}
W(x) &:= -\frac{c_d}{\psi(x)} \int_{\R^d} \frac{x-y}{|x-y|^d} \nabla \psi(y)\cdot V(y)\, dy\\
 & = 2p c_d (|x|^2+\alpha^2)^p\int_{\R^d} \frac{x-y}{|x-y|^d} \frac{ y\cdot V(y)}{(|y|^2+\alpha^2)^{p+1}}
 \, dy,
\end{aligned}
\end{equation}
where $c_d:= 1/d\omega_d$.
Clearly, therefore, the following statement is valid.

\begin{lemma}\label{lm_Wmuinv1}
If $V\colon\R^d\to \R^d$ is locally Lipschitz, then
for every $p$ and $\alpha$ the vector field $W$ defined by~\eqref{eq_defW3}
is $C^{1,\beta}_{loc}$ smooth for every $\beta\in [0,1)$. If, moreover, $V$ is also bounded, incompressible and has vanishing mean drift, then
$W$ is bounded and
the measure
$\mu:=\psi\,dx$ is invariant under the flow of~\eqref{eq_ODEpert1}.
\end{lemma}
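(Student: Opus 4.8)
The plan is to verify the three assertions of Lemma~\ref{lm_Wmuinv1} in order: local $C^{1,\beta}$ regularity of $W$, boundedness of $W$, and invariance of $\mu=\psi\,dx$. First I would establish regularity. Since $\psi$ is smooth and strictly positive, by~\eqref{eq_defW2} it suffices to control $u:=\Phi*(\nabla\psi\cdot V)$. The density $f:=\nabla\psi\cdot V$ is locally Lipschitz (because $V$ is locally Lipschitz and $\nabla\psi$ is smooth) and decays like $|x|^{-(2p+1)}$ at infinity, so it lies in $C^{0,\beta}_{loc}(\R^d)$ with, say, compactly-supported-up-to-integrable tails; by the standard Schauder interior estimates for the Newtonian potential, $u\in C^{2,\beta}_{loc}$, hence $\nabla u\in C^{1,\beta}_{loc}$, and dividing by the smooth positive $\psi$ keeps us in $C^{1,\beta}_{loc}$. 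One has to be slightly careful that $f$ is not compactly supported, but its fast decay makes the convolution defining $u$ and all the differentiations under the integral legitimate on any fixed ball, so this is routine.

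Next I would address boundedness of $W$. This is the quantitative heart of the lemma, but the excerpt explicitly defers it: the estimates are announced to be in Appendix~\ref{sec_estcorr1}, and they use the vanishing mean drift hypothesis via Appendix~\ref{sec_vmd0}. So in the proof I would simply invoke those appendix estimates, stating that under boundedness, incompressibility and vanishing mean drift of $V$ the representation~\eqref{eq_defW3} yields $\sup_x|W(x)|<\infty$ (with the bound depending on $\|V\|_\infty$, the mean-drift modulus, and the parameters $\alpha,p$), and refer the reader there. The point to emphasise is \emph{why} vanishing mean drift is needed: without it the factor $(|x|^2+\alpha^2)^p$ multiplying the singular integral could blow up, and it is precisely the cancellation coming from $\int_{[0,\ell]^d}V\to 0$ that tames the growth of $\int \frac{x-y}{|x-y|^d}\frac{y\cdot V(y)}{(|y|^2+\alpha^2)^{p+1}}\,dy$ as $|x|\to\infty$.

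Finally, invariance of $\mu$. Given the first two parts, $V+W$ is locally Lipschitz and bounded, so~\eqref{eq_ODEpert1} generates a well-defined flow $\varphi^t_{V+W}$. Invariance of $\mu=\psi\,dx$ under this flow is equivalent to the transport identity $\partial_t(\varphi^t_\#\mu)=0$, which by the divergence theorem reduces to $\div(\psi(V+W))=0$ in the weak (distributional) sense. By construction $W=\frac1\psi\nabla u$ with $-\Delta u=\nabla\psi\cdot V$, so $\div(\psi W)=\div(\nabla u)=\Delta u=-\nabla\psi\cdot V$, whence $\div(\psi(V+W))=\div(\psi V)-\nabla\psi\cdot V=\psi\,\div V=0$ using incompressibility of $V$; here $\div(\psi V)=\nabla\psi\cdot V+\psi\div V$ holds weakly since $\psi$ is smooth and $V$ is (locally Lipschitz, hence) in $L^\infty_{loc}$ with $\div V=0$. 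The only mild subtlety is passing from ``$\div(\psi(V+W))=0$ weakly'' to ``$\mu$ is flow-invariant'': since $\psi(V+W)$ is locally Lipschitz, this is the classical Liouville-type equivalence, and one may also argue directly that $t\mapsto\int\zeta\circ\varphi^{-t}\,d\mu$ is constant for $\zeta\in C^\infty_c$ by differentiating under the integral.

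The main obstacle is the boundedness of $W$: everything else (the regularity via Schauder, the algebraic identity giving invariance) is essentially bookkeeping, whereas the uniform bound genuinely requires the vanishing-mean-drift structure and is the reason the paper isolates it in an appendix. In writing the proof I would therefore keep parts one and three brief and explicitly route the boundedness claim through Appendix~\ref{sec_estcorr1}.
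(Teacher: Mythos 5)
Your proposal is correct and has the same overall structure as the paper's proof: (i) regularity of $W$ from elliptic regularity for the Newtonian potential (the paper cites local Sobolev regularity plus the Sobolev embedding where you use interior Schauder estimates -- an immaterial difference), (ii) boundedness of $W$ deferred to the appendix estimate, exactly as the paper does by quoting Lemma~\ref{lm_WCsmall}, and (iii) invariance of $\mu$ reduced to the weak identity $\div(\psi(V+W))=0$, i.e.~\eqref{eq_defWinv1a}. The one place where you genuinely deviate is the passage from that weak identity to flow-invariance: the paper invokes Lemma~\ref{lm_invmeas_div1}, whose proof goes through the superposition principle for the continuity equation, whereas you propose the elementary Liouville/transport argument (constancy of $t\mapsto\int\zeta\circ\varphi^{-t}\,d\mu$). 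That shortcut is legitimate in the present situation, since $\psi$ is smooth and positive and $\psi(V+W)$ is locally Lipschitz; in fact the paper itself remarks after Lemma~\ref{lm_invmeas_div1} that in exactly this case the superposition principle can be bypassed by checking uniqueness for the continuity equation ``manually''. Be aware, though, that ``differentiating under the integral'' alone does not close the argument: you need the flow chain rule $\nabla(\zeta\circ\varphi^{-t})\cdot(V+W)=\bigl(\nabla\zeta\cdot(V+W)\bigr)\circ\varphi^{-t}$ together with an approximation step allowing the Lipschitz, compactly supported test function $\zeta\circ\varphi^{-t}$ in the weak formulation (or, equivalently, a uniqueness statement for the continuity equation); otherwise the computation is circular, since the measure appearing after the change of variables is the unknown $\varphi^t_{\#}\mu$. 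A last small point: in the first assertion $V$ is only locally Lipschitz, so your appeal to the decay of $\nabla\psi\cdot V$ at infinity implicitly uses the boundedness of $V$ assumed only in the second assertion; since the regularity claim is local this is easily repaired (and the paper is equally brief on it).
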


\begin{proof}
The smoothness of $W$ follows immediately from local elliptic Sobolev regularity together
with the Sobolev embedding theorem.
If $V$ is also bounded, incompressible and has vanishing mean drift, then boundedness of $W$ follows from Lemma~\ref{lm_WCsmall}.
Hence in this case the vector field $V+W$ generates the flow, and
the invariance of $\mu$ follows from~\eqref{eq_defWinv1a} in view of Lemma~\ref{lm_invmeas_div1}.
\end{proof}

We observe now that with the appropriate choice of the parameters the vector field $W$ can be made arbitrarily small
in supremum and, under a bit more requirements on regularity of $V$, even Lipschitz norm; this is the assertion of the following lemma which collects
several calculations made in the Appendices~\ref{sec_estcorr1} and~\ref{sec_vmd0}.

\begin{lemma}\label{lm_Wgensmall}
Let $p\in ((d-1)/2, d/2)$. Suppose that $V\in \Lip_{loc}(\R^d;\R^d)$
%$V$
is a bounded incompressible
vector field with vanishing mean drift.
Then, given an $\varepsilon>0$, there is an $\bar\alpha=\bar\alpha(p,\varepsilon)$ such that for every $\alpha> \bar\alpha$
the vector field $W$ defined by~\eqref{eq_defW3}
satisfies
\begin{itemize}
\item[(i)] $\|W\|_\infty \leq \varepsilon$,
\item[(ii)] $\|\mathrm{div}\, W \|_\infty  \leq \varepsilon$.
% \begin{align*}
% \|W\|_\infty &\leq \varepsilon,\\
% \|\mathrm{div}\, W \|_\infty & \leq \varepsilon\quad \mbox{
% for every $\alpha> \bar\alpha$}.
%\end{align*}
\end{itemize}
If, moreover, $V\in
\Lip(\R^d;\R^d)$ is incompressible vector field with vanishing mean drift, and
all $V_{x_j}$, $j=1, \ldots, d$, are locally Lipschitz still having vanishing mean drift, then one can choose $\bar\alpha$ so as to have additionally
\begin{itemize}
\item[(iii)] $\|W_{x_j}\|_\infty \leq \varepsilon$ for every $\alpha> \bar\alpha$
and for every $j=1,\ldots, d$.
% \[
% \|W_{x_j}\|_\infty \leq \varepsilon\quad \mbox{for every $\alpha> \bar\alpha$}
% \]
% for every $j=1,\ldots, d$.
\end{itemize}
In particular, the latter assertion holds when $V\in C^1(\R^d;\R^d)\cap \Lip(\R^d;\R^d)$
and has uniformly continuous first derivatives.
\end{lemma}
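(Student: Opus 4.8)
The plan is to reduce everything to pointwise estimates on the integral defining $W$ and its derivatives, exploiting the scaling structure of $\psi$ and the vanishing mean drift hypothesis. First I would examine the kernel representation
\[
W(x) = 2pc_d\,(|x|^2+\alpha^2)^p \int_{\R^d} \frac{x-y}{|x-y|^d}\,\frac{y\cdot V(y)}{(|y|^2+\alpha^2)^{p+1}}\,dy,
\]
and observe that the essential point is a clash of two scales: the factor $(|x|^2+\alpha^2)^p$ grows, while the density $(|y|^2+\alpha^2)^{-(p+1)}$ decays, and the Newtonian kernel $|x-y|^{1-d}$ is locally integrable precisely because $d-1 < d$. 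The restriction $p \in ((d-1)/2, d/2)$ is exactly what makes $\psi = (|x|^2+\alpha^2)^{-p}$ a $\sigma$-finite but non-integrable measure growing slowly enough (it gives $\mu(B_\rho(0)) = o(\rho)$ up to the borderline, matching Corollary~\ref{co_Poincare1}), and simultaneously keeps the relevant convolution integrals convergent. I would split the $y$-integral into the region near $x$ (where the singular kernel dominates but $V$ is bounded and the density is comparable to $(|x|^2+\alpha^2)^{-(p+1)}$, killing the prefactor with a spare power) and the far region (where the kernel is harmless and one integrates a slowly decaying radial weight against $|V(y)|$, bounded). Here is where the vanishing mean drift enters: one cannot merely bound $|y\cdot V(y)|$ crudely, because $\int |y|(|y|^2+\alpha^2)^{-(p+1)}\,dy$ diverges for $p$ near $(d-1)/2$; instead one uses that averages of $V$ over large cubes tend to zero, so that after an integration by parts (moving a derivative from the smooth radial weight onto a primitive of $V$, or summation-by-parts over a dyadic decomposition in $|y|$) the effective integrand decays with an extra gain. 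This is precisely the content imported from Appendix~\ref{sec_vmd0}, and invoking those lemmas is what reduces $\|W\|_\infty$ to something that vanishes as $\alpha \to \infty$, uniformly in $x$: enlarging $\alpha$ spreads the mass of $\psi$ out and, because of the vanishing mean drift, the net contribution of $V$ against this increasingly flat weight tends to $0$.

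Granting (i), the bound (ii) on $\|\div W\|_\infty$ I would obtain not by differentiating (\ref{eq_defW3}) directly but from the defining PDE: by construction (\ref{eq_defW1}) gives $\div(\psi W) = -\nabla\psi\cdot V$, hence
\[
\div W = -\frac{\nabla\psi}{\psi}\cdot(V+W) = \frac{2p\,x}{|x|^2+\alpha^2}\cdot(V+W).
\]
Since $|x|/(|x|^2+\alpha^2) \le 1/(2\alpha)$, this is pointwise bounded by $\frac{p}{\alpha}\big(\|V\|_\infty + \|W\|_\infty\big)$, which is $\le \varepsilon$ once $\alpha$ is large, using (i). This is the cheap part. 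For (iii), under the stronger hypothesis that $V \in \Lip(\R^d;\R^d)$ with each $V_{x_j}$ locally Lipschitz and still having vanishing mean drift, I would differentiate the representation for $W$ in $x_j$. Two types of terms appear: one where $\partial_{x_j}$ hits the prefactor $(|x|^2+\alpha^2)^p$, producing an extra factor $x_j/(|x|^2+\alpha^2)$ of size $O(1/\alpha)$ times something already controlled as in (i); and one where $\partial_{x_j}$ hits the kernel $\frac{x-y}{|x-y|^d}$, producing a Calderón--Zygmund-type kernel of order $|x-y|^{-d}$, which is not locally integrable and must be handled by moving the derivative onto $V$ via integration by parts, $\partial_{x_j}^{(x)}\Phi(x-y) = -\partial_{y_j}^{(y)}\Phi(x-y)$, so that the singular kernel $|x-y|^{1-d}$ reappears, now paired with $\partial_{y_j}$ of the density $\times\, y\cdot V(y)$. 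That produces terms involving $V_{x_j}$ (to which the same mean-drift-based argument of part (i) applies, since $V_{x_j}$ has vanishing mean drift) and lower-order terms involving $V$ itself against differentiated weights, which are controlled as before. Again this is essentially a bookkeeping of the estimates collected in Appendix~\ref{sec_estcorr1}.

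The final sentence, that (iii) holds when $V \in C^1 \cap \Lip$ with uniformly continuous first derivatives, follows because uniform continuity of the $V_{x_j}$ on $\R^d$ forces them to be (in an averaged, large-cube sense) slowly varying, and in fact a standard argument — which I expect is recorded in Appendix~\ref{sec_vmd0} — shows that a uniformly continuous function which is a derivative of a Lipschitz (hence sublinearly growing in oscillation) function automatically has vanishing mean drift; so the hypotheses of the "moreover" clause are met. The main obstacle throughout is entirely in the estimates of part (i) and the analogous singular-integral estimates in part (iii): one must extract genuine decay in $\alpha$ from the vanishing mean drift condition against a weight that is only borderline integrable, and doing this cleanly requires the dyadic decomposition / integration-by-parts machinery of the appendices rather than any soft argument. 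Parts (ii) is essentially free once (i) is in hand, and the reductions for (iii) are mechanical; it is the uniform-in-$x$, $\alpha\to\infty$ smallness of the basic convolution that carries the real weight.
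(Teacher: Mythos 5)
Your plan for (ii) coincides with the paper's argument (Corollary~\ref{co_Wdivsmall}: the identity $\mathrm{div}\, W=-\nabla\log\psi\cdot(V+W)$ and the bound $|x|/(|x|^2+\alpha^2)\le 1/2\alpha$), and your decomposition for (iii) is structurally the paper's splitting $W_{x_j}=Z_1+Z_2+Z_3$ in Lemma~\ref{lm_WC1small} (derivative on the prefactor, on $V$, and on the weight), although you understate the work hidden in the third term. The genuine gap is in (i), which is the heart of the lemma and exactly the content your appeals to the appendices are supposed to supply. Your split is ``near the singularity $y=x$'' versus ``far'', with a crude bound near $x$ and vanishing mean drift far away; this does not give uniformity in $x$. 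After rescaling (see~\eqref{eq_WA1} and~\eqref{eq_WA4}) the dangerous regime is $|x|\gg\alpha$, where the weight becomes $y/(|y|^2+|x|^{-2})^{p+1}$ and the crude estimate of the contribution of a ball $B_\rho(0)$ around the \emph{origin} (not around the singularity) behaves like $\int_0^\rho r^{d}(r^2+|x|^{-2})^{-(p+1)}dr$, which blows up as $|x|\to\infty$ precisely because $p>(d-1)/2$; equivalently, in the original variables the crude bound grows like $|x|^{2p+1-d}$. The paper closes this by an exact cancellation coming from incompressibility alone: the flux of $V$ through spheres centered at the origin vanishes (Lemma~\ref{lm_2WC1div0}), so one may replace the kernel $K(n(x),y)$ by $K(n(x),y)-K(n(x),0)$ and gain a factor $|y|$, and it is only there that $p<d/2$ is used. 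This idea is absent from your proposal. Two further inaccuracies: your stated reason why crude bounds fail ($\int|y|(|y|^2+\alpha^2)^{-(p+1)}dy$ ``diverges for $p$ near $(d-1)/2$'') is wrong — that integral converges for every $p>(d-1)/2$; the real point is that crude bounds give $O(1)$ rather than $o(1)$ in $\alpha$ and are not uniform in $x$. And your mechanism for exploiting vanishing mean drift (a ``primitive of $V$'' or dyadic summation by parts) is only gestured at, whereas the paper's route is the polar decomposition together with the flux reformulation of the condition (Lemma~\ref{lm_md_flux1}, Examples~\ref{ex_md_flux2}--\ref{ex_md_flux3}, Proposition~\ref{prop_md_int0}, Lemmas~\ref{lm_NPsmall1}--\ref{lm_NPsmall2}); without developing one of these, part (i) remains a deferral rather than a proof.

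The justification you give for the final sentence also rests on a false claim: it is not true that ``a uniformly continuous function which is a derivative of a Lipschitz function automatically has vanishing mean drift'' — take $V(x)=x_1e_1$, which is Lipschitz with constant (hence uniformly continuous) derivatives, yet $\partial_{x_1}V_1\equiv 1$ has no vanishing mean drift. What is true, and what the paper proves in Lemma~\ref{lm_md_der1}, is that if $V$ \emph{itself} has vanishing mean drift and its first derivatives are uniformly continuous, then the derivatives inherit vanishing mean drift: approximate $V_{x_j}$ uniformly by the difference quotients $t^{-1}(V(\cdot+te_j)-V)$, each of which has vanishing mean drift as a difference of translates of $V$. (For bounded $V$ one can alternatively use the divergence theorem over boxes, giving averages of $V_{i,x_j}$ of order $\|V\|_\infty/\ell$.) Either fix is short, but the input is the vanishing mean drift, or boundedness, of $V$ — not uniform continuity alone. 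Finally, for your $Z_2$-type term in (iii) you should also record that the fields $V_{x_j}$ are incompressible (differentiate $\mathrm{div}\, V=0$), since the part-(i) machinery you invoke for them needs incompressibility in addition to boundedness and vanishing mean drift.
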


\begin{proof}
Assertions~(i) and~(ii) are Lemma~\ref{lm_WCsmall} and Corollary~\ref{co_Wdivsmall} respectively,~(iii) is Lemma~\ref{lm_WC1small}.
Finally, when $V\in C^1(\R^d;\R^d)\cap \Lip(\R^d;\R^d)$
and has uniformly continuous first derivatives, then
the first derivatives of $V$ also are incompressible and have vanishing mean drift by Lemma~\ref{lm_md_der1}, and so~(iii) still holds.
\end{proof}

\subsection{The end of the game: results}

The following statement is the first principal result of the paper.

\begin{theorem}\label{th_Pugh0}
Let $V\in \Lip_{loc}(\R^d;\R^d)$ be a bounded
incompressible vector field with %uniformly continuous first derivatives
%and
vanishing mean drift.
Then for every $\varepsilon>0$ there is a $C^{1,\beta}_{loc}$ (for every $\beta>0$) vector field
$W_\varepsilon\colon\R^d \to \R^d$
 with $\|W_\varepsilon\|_\infty \leq \varepsilon$ such that every $x\in \R^d$ is a nonwandering
 point of the
system of ODEs
\begin{equation}\label{eq_odePugh20}
\dot x = V(x)+W_\varepsilon(x).
\end{equation}
%and $\R^d$ is %forward
%invariant with respect to the flow defined by the latter.
If, moreover,
$V\in C^1(\R^d;\R^d)\cap \Lip(\R^d;\R^d)$
and has uniformly continuous first derivatives, then one may find $W_\varepsilon$ as above satisfying even the stronger estimate
$\|W_\varepsilon\|_{\Lip} \leq \varepsilon$.
\end{theorem}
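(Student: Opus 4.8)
The plan is to assemble Theorem~\ref{th_Pugh0} from the machinery already built in the excerpt, the only genuine input being the explicit corrector $W$ from~\eqref{eq_defW3} together with its smallness estimates from Lemma~\ref{lm_Wgensmall} and the generalized recurrence statement of Corollary~\ref{co_Poincare3}. Fix $\varepsilon>0$ and fix once and for all an exponent $p\in((d-1)/2,d/2)$ (this interval is nonempty for every $d\ge 1$, with the understanding that for $d=1$ one reads it appropriately; for $d\ge 2$ it is a genuine open interval). Apply Lemma~\ref{lm_Wgensmall} with this $p$ and with $\varepsilon$ to obtain $\bar\alpha=\bar\alpha(p,\varepsilon)$, pick any $\alpha>\bar\alpha$, and set $W_\varepsilon:=W$ as in~\eqref{eq_defW3} for these values of $p,\alpha$. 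By Lemma~\ref{lm_Wmuinv1} the field $W_\varepsilon$ is $C^{1,\beta}_{loc}$ for every $\beta\in[0,1)$ (in particular for every $\beta>0$ as stated), it is bounded, and the measure $\mu:=\psi\,dx$ with $\psi(x)=(|x|^2+\alpha^2)^{-p}$ is invariant under the flow of~\eqref{eq_odePugh20}. Lemma~\ref{lm_Wgensmall}(i) gives $\|W_\varepsilon\|_\infty\le\varepsilon$.

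The next step is to check that $V+W_\varepsilon$ together with $\mu$ satisfies the hypotheses of Corollary~\ref{co_Poincare3}. The vector field $V+W_\varepsilon$ is bounded (sum of two bounded fields) and locally Lipschitz ($V\in\Lip_{loc}$ by assumption, $W_\varepsilon\in C^{1,\beta}_{loc}\subset\Lip_{loc}$), so it generates a flow $\varphi^t_{V+W_\varepsilon}$. The measure $\mu$ is $\sigma$-finite (it is finite on bounded sets) and invariant under this flow. It remains to verify the growth condition $\mu(B_R(0))=o(R)$ as $R\to+\infty$: here
\[
\mu(B_R(0))=\int_{B_R(0)}(|x|^2+\alpha^2)^{-p}\,dx
= d\omega_d\int_0^R \frac{r^{d-1}}{(r^2+\alpha^2)^{p}}\,dr
\le C\int_0^R r^{d-1-2p}\,dr,
\]
and since $2p>d-1$ the exponent $d-1-2p<0$, indeed $d-1-2p>-1$ is not needed — what matters is that $d-2p<1$, i.e. $p>(d-1)/2$, so the integral grows like $R^{d-2p}$ with $d-2p<1$, hence $\mu(B_R(0))=O(R^{d-2p})=o(R)$. (If $d-2p\le 0$ the measure is even bounded.) Thus Corollary~\ref{co_Poincare1}, and then Corollary~\ref{co_Poincare3}, apply: Poisson stable points of $V+W_\varepsilon$ are dense in $\supp\mu$, and every point of $\supp\mu$ is nonwandering for the flow of~\eqref{eq_odePugh20}. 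Since $\psi>0$ everywhere, $\supp\mu=\R^d$, so every $x\in\R^d$ is nonwandering, proving the first assertion.

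For the second assertion, assume additionally $V\in C^1\cap\Lip$ with uniformly continuous first derivatives. Then the final clause of Lemma~\ref{lm_Wgensmall} guarantees that, enlarging $\bar\alpha$ if necessary, we also get $\|W_{x_j}\|_\infty\le\varepsilon$ for all $j$, hence — combining with~(i) and adjusting constants, i.e. running the argument with $\varepsilon/(d+1)$ in place of $\varepsilon$ so that $\|W_\varepsilon\|_\infty+\sum_j\|(W_\varepsilon)_{x_j}\|_\infty\le\varepsilon$, which controls $\|W_\varepsilon\|_{\Lip}=\|W_\varepsilon\|_\infty+\Lip\,W_\varepsilon$ since the Lipschitz constant is bounded by the sup of the full gradient — we obtain $\|W_\varepsilon\|_{\Lip}\le\varepsilon$. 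All other properties (boundedness, regularity, invariance of $\mu$, the growth estimate) are unchanged, so the same recurrence argument yields that every point of $\R^d$ is nonwandering.

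**The main obstacle**, such as it is, is almost entirely bookkeeping: everything substantive has been offloaded to the appendices (the smallness of $W$ and its divergence and derivatives, i.e. Lemmas~\ref{lm_WCsmall}, \ref{lm_WC1small} and Corollary~\ref{co_Wdivsmall}) and to Proposition~\ref{prop_Poincare1}/Corollary~\ref{co_Poincare3}. The one point requiring a little care is the precise matching of the exponent $p$: one must choose $p$ strictly below $d/2$ so that the appendix estimates for $\|W\|_\infty$ (and its derivatives) go through, and strictly above $(d-1)/2$ so that $\mu(B_R(0))=o(R)$ holds and the generalized Poincaré recurrence applies — the hypothesis $p\in((d-1)/2,d/2)$ in Lemma~\ref{lm_Wgensmall} is exactly what makes both ends meet. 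Beyond that, one just has to remember to rescale $\varepsilon$ by a dimensional constant when passing from the componentwise derivative bounds to the Lipschitz-norm bound.
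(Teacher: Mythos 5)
Your proposal is correct and follows essentially the same route as the paper: take $W_\varepsilon:=W$ from~\eqref{eq_defW3} with $p\in((d-1)/2,d/2)$ and $\alpha$ large via Lemma~\ref{lm_Wgensmall}, note $\mu(B_R(0))=o(R)$ because $p>(d-1)/2$, and conclude via Corollary~\ref{co_Poincare3} applied to $V+W$ with $\supp\mu=\R^d$. Your explicit rescaling of $\varepsilon$ to pass from the componentwise derivative bounds of Lemma~\ref{lm_Wgensmall}(i),(iii) to the $\|\cdot\|_{\Lip}$ bound is a harmless refinement of a step the paper leaves implicit.
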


\begin{proof}
We choose a $p\in ((d-1)/2,d/2)$
and
an $\alpha>0$ so as to have
$\|W\|_\infty \leq \varepsilon$
(resp.\ $\|W\|_{\Lip}\leq \varepsilon$ under the stronger regularity condition $V\in C^1(\R^d;\R^d)\cap \Lip(\R^d;\R^d)$ with uniformly continuous first derivatives), where
$W$ is defined by~\eqref{eq_defW3}: this is possible in view of
assertion~(i) (resp.~(iii)) of Lemma~\ref{lm_Wgensmall}.
%(when applying the Lemma~\ref{lm_Wgensmall}(iii) recall that the first derivatives of $V$ also are incompressible and have vanishing mean drift by Lemma~\ref{lm_md_der1}).
% Lemma~\ref{lm_WCsmall} (resp.\ Lemma~\ref{lm_WCsmall} together with Lemma~\ref{lm_WC1small}; when applying the latter lemma recall that the first derivatives of $V$ also are incompressible and have vanishing mean drift by Lemma~\ref{lm_md_der1}).
Since we have chosen $p> (d-1)/2$, then
\[
\mu(B_R(0)) =\int_{B_R(0)} \psi(x)\, dx = d\omega_d\int_0^R \frac{r^{d-1}\,dr}{(r^2+\alpha^2)^p}=o(R)
\]
as $R\to +\infty$,
and hence by Corollary~\ref{co_Poincare3} applied to $V+W$ (in place of $V$)
all the points of $\R^d=\supp\mu$ (note that $\supp\mu$ is %forward
invariant for the latter flow by Lemma~\ref{lm_invsupp1}) are nonwandering for the flow generated by the
vector field $V+W$.
It suffices to take then $W_\varepsilon:=W$.
\end{proof}

\begin{remark}\label{rm_Wdiv2}
It is important to emphasize that in contrast with the case when~\eqref{eq_odeaut1} has a \emph{finite} invariant measure
(e.g. when it has a compact invariant set),
an incompressible smooth vector field $V$ with vanishing mean drift
may produce a strongly dissipative dynamics in the sense of having a wandering set of full measure, as, for instance, when $d=2$ and, say, $V(x_1,x_2):= (0, \sin x_1)$.
%, so that a.e.
%point of $\R^2$ is a wandering point for the respective dynamical system.
In view of this observation the Theorem~\ref{th_Pugh0} is quite striking: it says that one may change the dynamics from strongly dissipative to a conservative one (i.e.\ with no wandering set)
over the whole (unbounded) space by an arbitrarily small  perturbation (even with small first derivatives) of the vector field.
\end{remark}

\begin{remark}\label{rm_Wdiv1}
It is worth observing that the perturbation $W_\varepsilon$, and hence the perturbed vector field $V+W_\varepsilon$
constructed in the proof
of the above Theorem~\ref{th_Pugh0} are in general not incompressible. However in view of Lemma~\ref{lm_Wgensmall}(ii) %Corollary~\ref{co_Wdivsmall}
one can ensure that the divergence of $W_\varepsilon$ (hence also that of $V+W_\varepsilon$) be
arbitrarily small in the uniform norm.
\end{remark}

The first corollary of the above theorem is the following global point-to-point controllability result, which is
a reformulation of the Burago-Ivanov-Novikov controllability theorem~1.1 from~\cite{BurNovIv16-fish}
(and a partial extension of theorem 4.2.7 in~\cite{Bloch15-control} formulated for compact manifolds, although for possibly more general control affine systems), however proven now by a completely different and direct method.

\begin{theorem}\label{th_control1}
Let $V\in \Lip_{loc}(\R^d;\R^d)$ be a bounded incompressible vector field with vanishing mean drift. Then for every couple of points $\{x_0,y_0\}\subset\R^d$ and every $\varepsilon>0$ there is a piecewise continuous function $u\colon \R^+\to\R^d$ (``control'') with $\|u\|_\infty\leq \varepsilon$ such that
the trajectory of the system of ODEs
\begin{equation}\label{eq_odectrl2}
\dot x = V(x)+u(t),
\end{equation}
satisfying $x(0)=x_0$ passes through $y_0$, i.e.\ $x(T)=y_0$ for some $T>0$.
\end{theorem}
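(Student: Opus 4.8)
The plan is to deduce Theorem~\ref{th_control1} directly from Theorem~\ref{th_Pugh0} together with the accessibility result Proposition~\ref{prop_trans1}. Fix $x_0$, $y_0$ and $\varepsilon>0$. First I would apply Theorem~\ref{th_Pugh0} with parameter $\varepsilon/2$ (say) to obtain a $C^{1,\beta}_{loc}$ corrector $W=W_{\varepsilon/2}$ with $\|W\|_\infty\le\varepsilon/2$ such that every point of $\R^d$ is nonwandering for the flow of $\dot x=V(x)+W(x)$; moreover, by construction, this flow preserves the $\sigma$-finite measure $\mu=\psi\,dx$ with $\supp\mu=\R^d$, and by Corollary~\ref{co_Poincare3} the Poisson stable points of $V+W$ are dense in $\supp\mu=\R^d$.

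Next I would set up the framework of Proposition~\ref{prop_trans1} applied to the vector field $\widetilde V:=V+W$, which is bounded and continuous (indeed locally Lipschitz), so that~\eqref{eq_odeaut1} with $\widetilde V$ is uniquely solvable. Take $M:=\R^d$, which is trivially closed and invariant under the flow $\varphi^t_{\widetilde V}$, on which Poisson stable points are dense by the previous paragraph; and $\R^d$ obviously cannot be written as a disjoint union of two non-empty closed invariant subsets since $\R^d$ is connected. Hence Proposition~\ref{prop_trans1} yields $\R^d=\A(x_0,U_\delta)$ for every $\delta>0$, in particular for $\delta=\varepsilon/2$: there is an admissible control $\tilde u\in U_{\varepsilon/2}$, i.e.\ piecewise continuous with $\|\tilde u\|_\infty<\varepsilon/2$, and a time $T>0$ such that the trajectory of $\dot x=\widetilde V(x)+\tilde u(t)=V(x)+W(x)+\tilde u(t)$ with $x(0)=x_0$ satisfies $x(T)=y_0$.

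Finally I would absorb the corrector into the control: since $W$ enters the perturbed equation exactly as a feedback term, the function $u(t):=W(x(t))+\tilde u(t)$ along that same trajectory is piecewise continuous (as $x(\cdot)$ is continuous and $W$ is continuous, while $\tilde u$ is piecewise continuous), satisfies $\|u\|_\infty\le\|W\|_\infty+\|\tilde u\|_\infty<\varepsilon/2+\varepsilon/2=\varepsilon$, and makes $x(\cdot)$ a trajectory of~\eqref{eq_odectrl2} joining $x_0$ to $y_0$ in time $T$. This completes the argument.

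I do not expect any genuine obstacle here: the theorem is essentially a packaging of Theorem~\ref{th_Pugh0} and Proposition~\ref{prop_trans1}, and the only points requiring a line of care are (a) checking that $\widetilde V=V+W$ satisfies the hypotheses of Proposition~\ref{prop_trans1} (boundedness of $W$ comes from Lemma~\ref{lm_Wgensmall}(i)/Lemma~\ref{lm_Wmuinv1}, continuity from the $C^{1,\beta}_{loc}$ smoothness), (b) noting that connectedness of $\R^d$ rules out the forbidden decomposition into closed invariant pieces, and (c) the triangle-inequality split of $\varepsilon$ into the size of $W$ and the size of $\tilde u$. If one wanted to be slightly more careful about the class of controls, one could note that $W\circ x$ is in fact continuous, so $u$ has exactly the same (piecewise continuity) regularity as $\tilde u$.
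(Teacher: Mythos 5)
Your proposal is correct and follows essentially the same route as the paper: apply Theorem~\ref{th_Pugh0} with $\varepsilon/2$, feed the perturbed field $V+W$ into Proposition~\ref{prop_trans1} with $M=\R^d$, and absorb the corrector into the control via $u(t):=W(x(t))+\tilde u(t)$. Your explicit checks (density of Poisson stable points via Corollary~\ref{co_Poincare3} and connectedness of $\R^d$ ruling out the forbidden decomposition) are details the paper leaves implicit, but the argument is the same.
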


\begin{proof}
Fixed an $\varepsilon>0$, by
Theorem~\ref{th_Pugh0} we find a smooth vector field
$W_\varepsilon\colon\R^d \to \R^d$
 with $\|W_\varepsilon\|_\infty \leq \varepsilon/2$ such that every $x\in \R^d$ is nonwandering
 with respect to the flow defined by $V+W_\varepsilon$ and $\R^d$ is %forward
 invariant with respect to the latter flow.
%We choose a $p\in ((d-1)/2,d/2)$
%and
%an $\alpha>0$ so as to have $\|W\|_{\Lip}\leq \varepsilon/2$, where
%$W$ is defined by~\eqref{eq_defW3} (this is possible in view of
% Lemma~\ref{lm_WCsmall}). Since we have chosen $p> (d-1)/2$, then
%\[
%\mu(B_R(0)) =\int_{B_R(0)} \psi(x)\, dx = d\omega_d\int_0^R \frac{r^{d-1}\,dr}{(r^2+\alpha^2)^p}=o(R),
%\]
%and hence by Corollary~\ref{co_Poincare3} aplied to $V+W$ (in place of $V$)
%all the points of $\R^d=\supp\mu$ are nonwandering for the flow generated by the
%vector field $V+W$. Note that $\supp\mu$ is invariant for the latter flow by Lemma~\ref{lm_invsupp1}.
Proposition~\ref{prop_trans1} (minding Remark~\ref{rem_trans1}) applied with $M:=\R^d$
and $V+W_\varepsilon$ instead of $V$ (in particular, $T_t$ standing for the flow generated by $V+W_\varepsilon$)
implies now the existence
of a piecewise continuous control $\tilde u\colon \R^+\to \R^d$ with
$\|\tilde u\|_\infty\leq \varepsilon/2$ such that the trajectory $x(\cdot)$ of the system
\[
\dot{x}= V(x)+W_\varepsilon(x)+\tilde u(t)
\]
starting at $x_0\in \R^d$ arrives  at $y_0\in \R^d$ in finite time (in alternative to
Proposition~\ref{prop_trans1} %and Remark~\ref{rem_trans1}
one could have used here theorem~5 from~\cite[chapter~4]{Jurdj97}).
It suffices to take now $u(t):= W_\varepsilon(x(t))+\tilde u(t)$.
\end{proof}

\begin{remark}\label{rm_Wdiv3}
If one extends a bit Proposition~\ref{prop_trans1} showing that one can achieve any given point in a compact set from a another point in the same set in finite time depending on the compact set, then under global Lipschitz continuity of $V$ one would have
also the estimate on arrival time to the destination as in theorem~1.2 of~\cite{BurNovIv16-fish} (and with exactly the same proof), though this is beyond the scope of the present paper.
\end{remark}

The following easy corollary slightly extends Theorem~\ref{th_control1}
to velocity fields which are just uniformly continuous (hence possibly even
not provide unique solvablity of~\eqref{eq_odeaut1}).

\begin{corollary}\label{co_control2}
If $V\colon\R^d\to \R^d$ is a bounded uniformly continuous
(not necessarily locally Lipschitz)
incompressible (in the weak sense) vector field with vanishing mean drift, then
for every couple of points $\{x_0,y_0\}\subset\R^d$ and every $\varepsilon>0$ there is a piecewise continuous control $u\colon \R^+\to\R^d$ with $\|u\|_\infty\leq \varepsilon$ such that
there is a trajectory of the system of ODEs~\eqref{eq_odectrl2}
satisfying $x(0)=x_0$ and  $x(T)=y_0$ for some $T>0$.
%(such a trajectory is clearly unique, if $V$ provides unique solvability of~\eqref{eq_odeaut1}).
\end{corollary}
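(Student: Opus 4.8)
The plan is to reduce the uniformly continuous case to the locally Lipschitz case of Theorem~\ref{th_control1} by mollification, and then to handle the perturbation error by a compactness/stability argument for trajectories of the control system. First I would fix the two points $x_0,y_0$ and $\varepsilon>0$. Let $V_\eta:=V*\rho_\eta$ where $\rho_\eta$ is a standard mollifier; then $V_\eta$ is smooth (in particular locally Lipschitz), bounded with $\|V_\eta\|_\infty\le\|V\|_\infty$, incompressible (convolution commutes with $\div$, so $\div V_\eta=(\div V)*\rho_\eta=0$ in the classical sense), and it inherits vanishing mean drift since the mean-drift functional is translation-invariant and commutes with convolution (one checks directly that $\frac{1}{\ell^d}\int_{[0,\ell]^d}V_\eta(x+y)\,dy=\big(\frac{1}{\ell^d}\int_{[0,\ell]^d}V(\cdot+y)\,dy\big)*\rho_\eta$, and the sup-norm of a convolution is bounded by the sup-norm of the factor). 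Crucially, by uniform continuity of $V$ we have $\|V_\eta-V\|_\infty\to 0$ as $\eta\to 0$; pick $\eta$ small enough that $\|V_\eta-V\|_\infty<\varepsilon/2$.

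Next I would apply Theorem~\ref{th_control1} to the locally Lipschitz field $V_\eta$ with tolerance $\varepsilon/2$: there is a piecewise continuous control $\tilde u$ with $\|\tilde u\|_\infty\le\varepsilon/2$ and a time $T>0$ such that the solution $x(\cdot)$ of $\dot x=V_\eta(x)+\tilde u(t)$, $x(0)=x_0$, satisfies $x(T)=y_0$. Now set
\[
u(t):=\tilde u(t)+\big(V_\eta(x(t))-V(x(t))\big),
\]
which is piecewise continuous (since $x(\cdot)$ is continuous and $V_\eta-V$ is continuous) and satisfies $\|u\|_\infty\le\|\tilde u\|_\infty+\|V_\eta-V\|_\infty\le\varepsilon/2+\varepsilon/2=\varepsilon$. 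With this choice the \emph{same} curve $x(\cdot)$ solves $\dot x=V(x)+u(t)$, because $V(x(t))+u(t)=V_\eta(x(t))+\tilde u(t)=\dot x(t)$; hence $x(\cdot)$ is a trajectory of~\eqref{eq_odectrl2} with $x(0)=x_0$ and $x(T)=y_0$. This is exactly the asserted conclusion, and note that since $V$ is only uniformly continuous we only claim existence of \emph{a} trajectory, not uniqueness, which is precisely how the statement is phrased.

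The only genuinely delicate point is verifying that mollification preserves the vanishing mean drift hypothesis; everything else is a routine ``feedback absorbs the modelling error'' trick that is standard in control theory and, indeed, already used in the proof of Theorem~\ref{th_control1} itself (where $W_\varepsilon$ is absorbed into the control). For the mean-drift claim I would argue: writing $D_\ell(x):=\frac{1}{\ell^d}\int_{[0,\ell]^d}V(x+y)\,dy$, Fubini gives $\frac{1}{\ell^d}\int_{[0,\ell]^d}V_\eta(x+y)\,dy=(D_\ell*\rho_\eta)(x)$, whence $\sup_x\big|\frac{1}{\ell^d}\int_{[0,\ell]^d}V_\eta(x+y)\,dy\big|\le\sup_x|D_\ell(x)|$, and the right-hand side tends to $0$ as $\ell\to\infty$ by the vanishing mean drift of $V$. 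Incompressibility in the weak sense passing to the classical one under mollification, and the uniform bound, are immediate. Thus $V_\eta$ satisfies all hypotheses of Theorem~\ref{th_control1}, and the corollary follows. $\qed$
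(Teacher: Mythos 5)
Your proposal is correct and follows essentially the same route as the paper: mollify $V$ to get a smooth field with the same hypotheses, apply Theorem~\ref{th_control1} to it with tolerance $\varepsilon/2$, and absorb the difference $V_\eta(x(t))-V(x(t))$ into the control along the obtained trajectory. Your explicit verification that mollification preserves the vanishing mean drift (via $D_\ell*\rho_\eta$) is a detail the paper states as ``clear,'' but it is the same argument.
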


\begin{proof}
Fixed an $\varepsilon>0$,
by means of a convolution with an appropriate smooth approximate identity with compact support we may find a smooth $V_\varepsilon\colon \R^d\to \R^d$
%we define $V_\varepsilon:= V * \varphi_\delta$,
%where $\varphi_\delta$ is some smooth approximate identity, choosing
%a $\delta>0$
satisfying
%so that
$\|V-V_\varepsilon\|_\infty\leq \varepsilon/2$
(this is possible because $V$ is assumed to be uniformly continuous).
Clearly, $V_\varepsilon$ is still incompressible and has vanishing mean drift.
Since now $V_\varepsilon$ is smooth, we may
apply Theorem~\ref{th_control1} to find a piecewise continuous control
$\tilde u\colon \R^+\to \R^d$ with
$\|\tilde u\|_\infty\leq \varepsilon/2$ such that the trajectory $x(\cdot)$ of the system
\[
\dot{x}= V_\varepsilon (x)+\tilde u(t)
\]
starting at $x_0\in \R^d$ arrives  at $x_1\in \R^d$ in finite time.
It suffices to take then $u(t):= V_\varepsilon(x(t))-V(x(t))+\tilde u(t)$.
\end{proof}

At last, we are able to prove the following version of the Pugh closing lemma which is the second principal result of the paper.

\begin{theorem}\label{th_Pugh1}
Let $V\in C^1(\R^d;\R^d)\cap \Lip(\R^d;\R^d)$ be a bounded
incompressible vector field with uniformly continuous first derivatives and
satisfying vanishing mean drift
condition.
Then for every  $x_0\in \R^d$ and every $\varepsilon>0$ there is a $C^1$ vector field
$Y_\varepsilon\colon\R^d \to \R^d$
 with $\|Y_\varepsilon\|_{\Lip}\leq \varepsilon$ such that $x_0$ is a periodic (or occasionally even stationary)
 point of the
system of ODEs
\begin{equation}\label{eq_odePugh2}
\dot x = V(x)+Y_\varepsilon(x).
\end{equation}
\end{theorem}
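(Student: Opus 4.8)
The plan is to combine Theorem~\ref{th_Pugh0} (in its stronger, Lipschitz-norm form, which applies precisely because $V\in C^1(\R^d;\R^d)\cap\Lip(\R^d;\R^d)$ has uniformly continuous first derivatives) with the classical Pugh closing lemma. First I would fix $x_0$ and $\varepsilon>0$, and apply Theorem~\ref{th_Pugh0} with $\varepsilon/2$ in place of $\varepsilon$ to obtain a $C^1$ vector field $W_{\varepsilon/2}$ with $\|W_{\varepsilon/2}\|_{\Lip}\le\varepsilon/2$ such that every point of $\R^d$ — in particular $x_0$ — is nonwandering for the flow of $\dot x=V(x)+W_{\varepsilon/2}(x)$. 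This reduces the problem to one where the target point is already nonwandering.

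Next I would invoke Pugh's closing lemma~\cite{Pugh67}: since $x_0$ is nonwandering for the $C^1$ vector field $V+W_{\varepsilon/2}$, there is a $C^1$ perturbation $Z$ with $\|Z\|_{\Lip}\le\varepsilon/2$ (the closing lemma gives smallness in $C^1$, i.e.\ Lipschitz, norm) such that $x_0$ becomes periodic — or stationary — for $\dot x=V(x)+W_{\varepsilon/2}(x)+Z(x)$. Setting $Y_\varepsilon:=W_{\varepsilon/2}+Z$ gives a $C^1$ field with $\|Y_\varepsilon\|_{\Lip}\le\|W_{\varepsilon/2}\|_{\Lip}+\|Z\|_{\Lip}\le\varepsilon$, and $x_0$ is periodic (or stationary) for $\dot x=V(x)+Y_\varepsilon(x)$, which is exactly~\eqref{eq_odePugh2}. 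One should note that the perturbed field $V+W_{\varepsilon/2}$ need not be incompressible (Remark~\ref{rm_Wdiv1}), but Pugh's closing lemma in its general form does not require any volume-preservation hypothesis, so this is not an obstruction; it suffices that $V+W_{\varepsilon/2}$ be $C^1$, which Lemma~\ref{lm_Wmuinv1} guarantees.

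The only genuinely delicate point is matching the hypotheses of the closing lemma: the standard formulations are stated on compact manifolds (or for flows with compact nonwandering set), whereas here the phase space is the unbounded $\R^d$. I would address this by localizing — the closing lemma is used only near the single point $x_0$, and the perturbation $Z$ it produces can be taken supported in a small neighborhood of the orbit segment being closed up, which lies in a bounded region; restricting attention to a large ball containing that region (invariant up to the finite time involved, by the bound $|\varphi^t(x)|\le|x|+(\|V\|_\infty+\|W_{\varepsilon/2}\|_\infty)|t|$) reduces to the compact setting, after which $Z$ is extended by zero. This localization, and the verification that the $C^1$-small perturbation furnished by Pugh's argument indeed has $\|Z\|_{\Lip}\le\varepsilon/2$, is the part requiring care; everything else is a routine concatenation of already-established results.
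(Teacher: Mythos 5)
Your proposal is correct and follows essentially the same route as the paper: apply Theorem~\ref{th_Pugh0} (in its stronger Lipschitz-norm form, available under the stated regularity of $V$) to make every point of $\R^d$ nonwandering via a global perturbation with $\|\cdot\|_{\Lip}\leq \varepsilon/2$, then use Pugh's closing lemma to add a further local perturbation with $\|\cdot\|_{\Lip}\leq \varepsilon/2$ closing the orbit through $x_0$, and take $Y_\varepsilon$ to be the sum. Your closing remark about localizing Pugh's lemma near the orbit segment (the paper applies the lemma without comment, relying on the fact that the perturbation it produces is supported near a bounded orbit piece, so noncompactness of $\R^d$ is not an obstruction) merely makes explicit a point the paper leaves implicit.
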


\begin{proof}
Fixed an $\varepsilon>0$, by
Theorem~\ref{th_Pugh0} we find a smooth vector field
$W\colon\R^d \to \R^d$
 with $\|W\|_\infty \leq \varepsilon/2$ such that every $x\in \R^d$ is nonwandering
 with respect to the flow defined by $V+W$.
%We choose a $p\in ((d-1)/2,d/2)$
%and
%an $\alpha>0$ so as to have $\|W\|_{C^1}\leq \varepsilon/2$, where
%$W$ is defined by~\eqref{eq_defW3}: this is possible in view of
% Lemma~\ref{lm_WCsmall} and Lemma~\ref{lm_WC1small} (when applying the latter lemma recall that the first derivatives of $V$ also are incompressible and have vanishing mean drift by Lemma~\ref{lm_md_der1}).
%again, as in the proof of Theorem~\ref{th_control1}
%% Since we have chosen $p> (d-1)/2$, then
%% \[
%% \mu(B_R(0)) =\int_{B_R(0)} \psi(x)\, dx = d\omega_d\int_0^R \frac{r^{d-1}\,dr}{(r^2+\alpha^2)^p}=o(R),
%% \]
%we have $\mu(B_R(0)) =o(R)$, and thus
%by Corollary~\ref{co_Poincare3} aplied to $V+W$ (in place of $V$)
%all the points of $\R^d=\supp\mu$ are nonwandering for the flow generated by the
%vector field $V+W$.
Observe that perturbation $W$ of the vector field $V$ is global and
does not depend on the point $x_0$ which we want to make periodic.
It suffices now to use Pugh's closing Lemma~\cite{Pugh67} to construct a local small perturbation
$\tilde W$ of $V+W$ with $\|\tilde W\|_{\Lip}\le \varepsilon/2$, so that
$x_0$ becomes periodic with respect to the flow of $V+Y$, where
$Y:=W+\tilde W$, and observe that $\|Y\|_{\Lip}\leq \varepsilon$ as claimed.
\end{proof}

\appendix

\section{Vanishing mean drift condition}\label{sec_vmd0}

We collect in this section some auxiliary results essentially depending on the vanishing mean drift condition
of a vector field involved. Here the vector field will be always assumed
at least locally Lipschitz, so that its divergence is understood in the classical pointwise (a.e.) sense.
In principle this can be further somewhat weakened (e.g. by substituting incompressibility condition in the sense of vanishing pointwise divergence by its suitable weaker analogue), though we do not pursue this direction here.

\subsection{Properties of vector fields with vanishing mean drift}

We start with the following statement.

\begin{lemma}\label{lm_md_flux1}
Suppose $V\in \Lip_{loc}(\R^d;\R^d)$ is a bounded %locally Lipschitz 
vector field, and for each $\varepsilon>0$ there is an $\ell_0>0$ such that
for every $(d-1)$-dimensional box
$Q$ of sidelength $\ell\geq \ell_0$ one has that the mean flux
\begin{equation}\label{eq_smallflux1}
\frac{1}{\ell^{d-1}}\left|\int_Q V(x)\cdot n \, d\HH^{d-1}(x)\right|
\leq \varepsilon,
\end{equation}
where $n$ stands for a normal vector to the hyperplane containing $Q$
(we will say that $V$ has vanishing mean flux).
Then $V$ has vanishing mean drift.

Vice versa, if a bounded $V\in \Lip_{loc}(\R^d;\R^d)$ has vanishing mean drift and is incompressible, then it satisfies the above property (i.e.\ has vanishing mean flux).
In particular, for incompressible bounded locally Lipschitz vector fields, having vanishing mean drift is equivalent to having vanishing mean flux.
\end{lemma}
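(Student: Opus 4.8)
The plan is to establish the two implications separately. The direction ``vanishing mean flux $\Rightarrow$ vanishing mean drift'' does not use incompressibility and is essentially a one-line slicing computation; the converse is the substantive one, where incompressibility and boundedness of $V$ enter. For the easy direction, fix $\varepsilon>0$ and let $\ell_0$ be the scale supplied by the mean-flux hypothesis. For any $x\in\R^d$, any $\ell\ge\ell_0$ and any $j\in\{1,\dots,d\}$, slicing the cube $x+[0,\ell]^d$ by the hyperplanes $\{y_j=s\}$ and applying Fubini gives
\[
\int_{x+[0,\ell]^d} V_j\,dy=\int_{x_j}^{x_j+\ell}\Bigl(\int_{Q_s}V\cdot e_j\,d\HH^{d-1}\Bigr)ds,\qquad Q_s:=\{y\in x+[0,\ell]^d:\ y_j=s\},
\]
where each slice $Q_s$ is a $(d-1)$-dimensional box of sidelength $\ell\ge\ell_0$ with unit normal $e_j$; hence the inner integral has modulus at most $\varepsilon\ell^{d-1}$, so the $j$-th component of the mean drift of $V$ over $x+[0,\ell]^d$ has modulus at most $\varepsilon$, uniformly in $x$. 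Letting $\varepsilon\to0$ yields the vanishing mean drift condition.

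For the hard direction, write for brevity $D(\ell):=\sup_{x\in\R^d}\ell^{-d}\bigl|\int_{x+[0,\ell]^d}V\,dy\bigr|$, so vanishing mean drift means $D(\ell)\to0$. I would first record the elementary upgrade of this smallness from cubes to elongated boxes: for every $\eta>0$ there is $m(\eta)>0$ such that $|R|^{-1}\bigl|\int_R V\,dy\bigr|\le\eta$ (componentwise) for every $d$-dimensional face-parallel box $R$ all of whose sidelengths exceed $m(\eta)$. Indeed, fixing a side $c$ with $D(c)\le\eta/2$, one tiles $R$ by cubes of side $c$, bounds the volume integral over each cube by $c^dD(c)$, and bounds the integral over the uncovered boundary shell by $\|V\|_\infty$ times its volume, which is a fraction of order $c/\min_k(\mathrm{side}_k)$ of $|R|$; this is the only place boundedness of $V$ is used.

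Now fix a $(d-1)$-dimensional face-parallel box $Q$ of sidelength $\ell$; we may assume its normal is $e_d$, say $Q=Q_0\times\{c\}$ with $Q_0\subset\R^{d-1}$ a $(d-1)$-cube of side $\ell$ (the general case being analogous), and put $g(t):=\int_{Q_0\times\{t\}}V_d\,d\HH^{d-1}$, so that the flux through $Q$ equals $g(c)$. Incompressibility gives $\partial_{x_d}V_d=-\sum_{i<d}\partial_{x_i}V_i$ a.e.; integrating over $Q_0$ and using the fundamental theorem of calculus in each transversal variable $x_i$ reduces each resulting term to a $(d-2)$-dimensional integral of a difference of boundary values of $V_i$, whence $|g'(t)|\le 2(d-1)\|V\|_\infty\,\ell^{d-2}$, so $g$ oscillates by at most $2(d-1)\|V\|_\infty\,\ell^{d-2}\sigma$ over any interval of length $\sigma$. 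Take $\sigma:=\sqrt{\ell}$ and the ``pancake'' box $R:=Q_0\times[c,c+\sigma]$, whose sidelengths are $\ell$ ($d-1$ of them) and $\sigma$, all tending to $\infty$ with $\ell$; the preliminary estimate gives $\sigma^{-1}\ell^{-(d-1)}\bigl|\int_R V_d\,dy\bigr|\to0$. Since $\int_c^{c+\sigma}g(t)\,dt=\int_R V_d\,dy$, we have
\[
|g(c)|\le\frac1\sigma\Bigl|\int_c^{c+\sigma}g(t)\,dt\Bigr|+\max_{t\in[c,c+\sigma]}|g(t)-g(c)|,
\]
and the first summand equals $\sigma^{-1}\bigl|\int_R V_d\,dy\bigr|=o(\ell^{d-1})$ while the second is at most $2(d-1)\|V\|_\infty\,\ell^{d-2}\sigma$; hence $\ell^{-(d-1)}|g(c)|\le o(1)+2(d-1)\|V\|_\infty/\sqrt\ell\to0$ as $\ell\to\infty$, uniformly in the position of $Q$. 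This is precisely the vanishing mean flux condition, and together with the first implication it proves the stated equivalence for incompressible fields.

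The main obstacle is the choice of the auxiliary box in the hard direction. A cube of side $\ell$ is useless: the oscillation estimate controls $g$ only up to an additive error of order $\|V\|_\infty\ell^{d-1}$ over a normal extent of order $\ell$ — the same order as $g$ itself — so the (small) mean drift of a cube does not force the flux through one of its faces to be small. Taking instead a box long (side $\ell$) in the $d-1$ directions spanning $Q$ but of intermediate thickness $1\ll\sigma\ll\ell$ in the normal direction makes the oscillation of $g$ negligible relative to $\ell^{d-1}$, while keeping every sidelength large so that the box still has small mean drift — which is exactly why the cube-to-box upgrade is needed. One should also check that local Lipschitz regularity of $V$ suffices to differentiate $g$ and to use $\div V=0$ pointwise a.e.; if one prefers, a standard mollification of $V$ — which preserves boundedness, incompressibility and the bound $D(\ell)$ — disposes of this point.
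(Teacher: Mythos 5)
Your proof is correct, but it is organised differently from the paper's. For the easy implication (vanishing mean flux $\Rightarrow$ vanishing mean drift) your Fubini slicing of the cube is exactly the paper's argument. For the substantive converse, however, the paper does not argue at all: it simply invokes lemma~3.1 of Burago--Ivanov--Novikov \cite{BurNovIv16-fish}, whereas you give a self-contained proof. Your scheme --- upgrade the cube mean-drift bound to arbitrary boxes with all sides large by tiling with small cubes and absorbing the boundary shell via $\|V\|_\infty$, then compare the flux $g(c)$ through $Q$ with the average of $g$ over a ``pancake'' $Q_0\times[c,c+\sigma]$ of intermediate thickness $\sigma=\sqrt\ell$, controlling the oscillation of $g$ by the lateral flux $2(d-1)\|V\|_\infty\ell^{d-2}\sigma$ coming from $\div V=0$ --- is sound, and the choice $1\ll\sigma\ll\ell$ is indeed the crux; this makes the statement self-contained at the cost of roughly a page, which is what the citation buys the paper. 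Two small points you should make explicit: the lemma allows $Q$ to lie in an \emph{arbitrary} hyperplane, not just a coordinate one, so you need the cube-to-box upgrade for boxes of arbitrary orientation; your tiling argument gives this verbatim (small axis-parallel cubes fully contained in the tilted box cover all but a shell of volume $O(c|R|\sum_k 1/L_k)$), and the slab estimate is rotation-invariant, so ``analogous'' is justified but should be said. Second, rather than differentiating $g$, it is cleaner to apply the Gauss--Green theorem directly to the slab $Q_0\times[c,t]$ (legitimate for locally Lipschitz, hence $W^{1,\infty}_{loc}$, fields with a.e.\ divergence zero, which is how the paper interprets incompressibility), which yields $|g(t)-g(c)|\leq 2(d-1)\|V\|_\infty\ell^{d-2}(t-c)$ without any discussion of a.e.\ differentiability of $g$; your mollification fallback also works since mollification preserves $\|V\|_\infty$, incompressibility and the mean-drift modulus uniformly.
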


\begin{remark} It is worth observing that incompressibility
condition is essential for a vector field $V$ having vanishing mean drift to have vanishing mean flux.
In fact, the vector field $V(x,y):=(f(x),0)$ in $\R^2$, where $f$
is a smooth function with compact support in $\R$, clearly has vanishing mean drift, but not vanishing mean flux as can be seen by computing the
flux of $V$ through one-dimensional segments $ \{(x,0)\}\times [-\ell/2, \ell/2]$ as
$\ell \to \infty$; in fact, $V$ is in general not incompressible.
\end{remark}

\begin{proof}
For a fixed $\varepsilon>0$ let
$\ell_0$ be such that the vector field
(in fact, even not necessarily incompressible)
$V$ satisfies~\eqref{eq_smallflux1} for every cube $Q$ of sidelength $\ell>\ell_0$.
Then for every $i\in \{1,\ldots, d\}$ and every $\ell>\ell_0$
one has
\begin{align*}
\left|\int_{x+ [-\ell/2,\ell/2]^d} V_i(y)\, dy\right| & =
\left|\int_{-\ell/2}^{\ell/2}\, dt
\int_{(x+ [-\ell/2,\ell/2]^d)\cap\{x_i=t\}} V_i(s)\,
d\HH^{d-1}(s)
\right|\\
&\leq \int_{-\ell/2}^{\ell/2}\, dt \left|
\int_{(x+ [-\ell/2,\ell/2]^d)\cap\{x_i=t\}} V_i(s)\,
d\HH^{d-1}(s)
\right| \\
& \leq \ell \int_{-\ell/2}^{\ell/2}\varepsilon \ell^{d-1}\, dt =\varepsilon \ell^d,
\end{align*}
i.e.\ $V$ has vanishing mean drift as claimed.
The reverse statement for incompressible $V$ is lemma~3.1 from~\cite{BurNovIv16-fish}.
\end{proof}

It seems quite intuitive that the $(d-1)$-dimensional boxes in the vanishing mean flux condition of Lemma~\ref{lm_md_flux1} may be replaced by more general
increasing sequences of sets.
We give here only two particular examples to be used later.

\begin{example}\label{ex_md_flux2}
If $V\in \Lip_{loc}(\R^d;\R^d)$ is a bounded %locally Lipschitz 
vector field
with vanishing mean flux, then
for each $\varepsilon>0$ there is an $R_0=R_0(\varepsilon)>0$ such that
for every $(d-1)$-dimensional ball (i.e.\ a ball in a $(d-1)$-dimensional affine hyperplane)
$B_R$ of radius $R\geq R_0$ one has that the mean flux
\begin{equation}\label{eq_smallflux2}
\frac{1}{\HH^{d-1}(B_R)}\left|\int_{B_R} V(x)\cdot n \, d\HH^{d-1}(x)\right|
\leq \varepsilon,
\end{equation}
where $n$ stands for a normal vector to the hyperplane containing $B_R$.
In fact, given an $\varepsilon>0$, we may cover a part of the unit $(d-1)$-dimensional ball
$B$ with $N=N(\varepsilon)$ disjoint $(d-1)$-dimensional open
boxes $Q_i\subset B$, $i=1,\ldots, N$, so that
\[
\HH^{d-1}(B\setminus\cup_{i=1}^N Q_i)\leq  \frac{\varepsilon}{2\|V\|_\infty}\HH^{d-1}(B).
\]
Letting now $x_0$ stand for the center of $B_R$, we may take an $R_0>0$ (of course, depending on $\varepsilon$)
such that
\[
\left|\int_{x_0+RQ_i} V(x)\cdot n \, d\HH^{d-1}(x)\right|
\leq \frac{\varepsilon}{2} \HH^{d-1}(RQ_i) = \frac{\varepsilon}{2} R^{d-1} \HH^{d-1}(Q_i)
\]
for all $i=1,\ldots, N$ and all $R> R_0$
in view of the vanishing mean drift condition.
We get therefore
\begin{align*}
\left|\int_{B_R} V(x)\cdot n \, d\HH^{d-1}(x)\right| & \leq
 \left|\int_{\cup_{i=1}^N (x_0+R Q_i)} V(x)\cdot n \, d\HH^{d-1}(x)\right| \\
 & \qquad + \|V\|_\infty
\HH^{d-1}(RB\setminus \cup_{i=1}^N R Q_i)\\
&\leq \frac{\varepsilon}{2} R^{d-1} \HH^{d-1}(\cup_{i=1}^N Q_i) + \frac{\varepsilon}{2} R^{d-1} \HH^{d-1}(B\setminus\cup_{i=1}^N Q_i)\\
&= \varepsilon R^{d-1} \HH^{d-1}(B)
\end{align*}
for $R>R_0$ proving the claim.
\end{example}

\begin{example}\label{ex_md_flux3}
For an $x\in \partial B_1(0)$ and $r\leq 2$ denote $D_r(x):= \partial B_1(0)\cap
B_r(x)$ (i.e.\ a ball in the natural inner metric of $\partial B_1(0)$).
If $V\in \Lip_{loc}(\R^d;\R^d)$ is a bounded incompressible
%locally Lipschitz 
vector field
with vanishing mean drift (hence with vanishing mean flux by Lemma~\ref{lm_md_flux1}), then
for each $\varepsilon>0$ there is an $R_0>0$ such that
for every
$R\geq R_0$ and every $x\in \partial B_1(0)$ one has that the mean flux
\begin{equation}\label{eq_smallflux3}
\frac{1}{\HH^{d-1}(R D_r(x))}\left|\int_{RD_r(x)} V(y)\cdot n(y) \, d\HH^{d-1}(y)\right|
\leq \varepsilon,
\end{equation}
where $n(y)$ stands for the external unit normal to $\partial B_1(0)$ at $y$.
In fact, consider the spherical cap $\Omega$ cut from the unit ball $B_1(0)$
by the hyperplane $\pi$ containing the set $\partial B_1(0)\cap
\partial B_r(x)$ (i.e.\ the relative boundary of $D_r(x)$ in  $\partial B_1(0)$), so that its boundary is the union of
$D_r(x)$ with the
$(d-1)$-dimensional closed ball $C_r:=\bar B_1(0)\cap \pi$. Letting $n$ stand for the external normal
to the boundary of $\Omega$, we get, given an $\varepsilon>0$, the estimate
\begin{align*}
\left|\int_{RD_r(x)} V(y)\cdot n(y) \, d\HH^{d-1}(y)\right|
%&= \left|-\int_{R C_r} V(y)\cdot n(y) \, d\HH^{d-1}(y)
%+\int_{R \partial \Omega} V(y)\cdot n(y) \, d\HH^{d-1}(y)\right|\\
&= \left|-\int_{R C_r} V(y)\cdot n(y) \, d\HH^{d-1}(y)
+\int_{R \Omega} \mbox{div}\, V(y)\, dy\right|\\
&= \left|\int_{R C_r} V(y)\cdot n(y) \, d\HH^{d-1}(y)\right| \\
& \leq \varepsilon
\HH^{d-1}(R C_r)
=
\varepsilon
\HH^{d-1}(C_r)  R^{d-1}\\
&\leq
 \varepsilon
\HH^{d-1}(D_r(x))  R^{d-1}
\end{align*}
for all $R>R_0$, where $R_0$ is chosen (depending on $\varepsilon$)
so that the first inequality in the above chain be satisfied (which is possible by Example~\ref{ex_md_flux2}),
and therefore
\[
\frac{1}{\HH^{d-1}(R D_r(x))}\left|\int_{RD_r(x)} V(y)\cdot n(y) \, d\HH^{d-1}(y)\right| \leq \varepsilon
\]
as claimed.
\end{example}

\begin{lemma}\label{lm_md_der1}
Suppose $V\in C^1(\R^d;\R^d)$ is a vector field
with vanishing mean drift having uniformly continuous first derivatives.
Then these derivatives also have vanishing mean drift.
\end{lemma}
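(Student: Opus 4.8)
The plan is to show that for each index $j$ and each direction $e_i$, the scalar function $\partial_{x_i} V_j$ has vanishing mean drift, i.e.\ that $\ell^{-d}\int_{[0,\ell]^d} \partial_{x_i} V_j(x+y)\,dy \to 0$ uniformly in $x$ as $\ell\to\infty$. The natural idea is to integrate by parts: by the fundamental theorem of calculus in the $i$-th variable, the integral of $\partial_{x_i} V_j$ over the box $x+[0,\ell]^d$ equals the difference of integrals of $V_j$ over the two opposite faces $\{y_i=\ell\}$ and $\{y_i=0\}$ of the box, each being a $(d-1)$-dimensional box of sidelength $\ell$. Thus
\[
\int_{x+[0,\ell]^d}\partial_{x_i}V_j(y)\,dy
= \int_{F_\ell^+} V_j\,d\HH^{d-1} - \int_{F_\ell^-} V_j\,d\HH^{d-1},
\]
where $F_\ell^\pm$ are the two faces. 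Dividing by $\ell^d$, we must show that $\ell^{-d}$ times this face-integral difference tends to $0$. The obstacle is that each face integral is only known to be $O(\ell^{d-1})$ a priori (since $V$ is bounded), which after division by $\ell^d$ gives $O(1/\ell)\to 0$ — wait, that already suffices! Indeed $|V_j|\le \|V\|_\infty$ gives $|\int_{F_\ell^\pm}V_j\,d\HH^{d-1}|\le \|V\|_\infty \ell^{d-1}$, so $\ell^{-d}$ times the difference is bounded by $2\|V\|_\infty/\ell\to 0$. Hence the conclusion follows directly without even invoking uniform continuity of the derivatives — the boundedness of $V$ alone does the job once we have the integration-by-parts identity.

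So the key steps are: (1) fix $i,j$; (2) apply Fubini to write the box integral of $\partial_{x_i}V_j$ as an iterated integral, with the innermost integral in $y_i$ computed by the fundamental theorem of calculus (legitimate since $V\in C^1$), producing the difference of the two face integrals; (3) bound each face integral crudely by $\|V\|_\infty\,\ell^{d-1}$; (4) divide by $\ell^d$ and let $\ell\to\infty$. Since the estimate is uniform in $x$, this shows $\partial_{x_i}V_j$ has vanishing mean drift for every $i,j$, which is exactly the claim.

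I would remark that the hypotheses ``vanishing mean drift'' and ``uniformly continuous first derivatives'' in the statement are in fact not needed for this particular argument: only $V\in C^1$ with $V$ bounded is used. (The uniform continuity of the derivatives is what is genuinely needed elsewhere, e.g.\ to justify the approximation-by-mollification step in Lemma~\ref{lm_Wgensmall} and to ensure the derivatives themselves are bounded, but it is harmless to include it in the statement for the sake of the application.) The main ``obstacle'', then, is purely bookkeeping: making sure the integration-by-parts identity is applied on the correct faces and that the crude bound is uniform in the translation parameter $x$; there is no analytic difficulty.
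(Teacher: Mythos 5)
Your argument is correct whenever $V$ is bounded, and it is a genuinely different route from the paper's. You integrate $\partial_{x_i}V_j$ over the cube $x+[0,\ell]^d$ via the fundamental theorem of calculus, reduce to the two opposite face integrals, and bound these crudely by $\|V\|_\infty\,\ell^{d-1}$, which gives the explicit uniform rate $2\|V\|_\infty/\ell$. The paper instead approximates $V_{i,x_j}$ uniformly (mean value theorem plus uniform continuity of the derivatives) by the difference quotient $t^{-1}\bigl(V_i(\cdot+te_j)-V_i(\cdot)\bigr)$ with $t$ small but fixed, observes that this quotient has vanishing mean drift because it is a fixed linear combination of translates of $V_i$, and then passes the box averages through this approximation. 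The trade-off is instructive: your proof uses neither the vanishing mean drift of $V$ nor the uniform continuity of its derivatives, but it does use $\|V\|_\infty<\infty$, which is \emph{not} among the hypotheses of the lemma as stated ($V$ is only assumed $C^1$ with vanishing mean drift and uniformly continuous first derivatives), whereas the paper's argument needs no boundedness at all. In every place the lemma is actually invoked (the last assertion of Lemma~\ref{lm_Wgensmall}, hence Theorems~\ref{th_Pugh0} and~\ref{th_Pugh1}) the field $V$ is bounded, so your proof covers the applications; but as a proof of the lemma exactly as stated you should either add boundedness explicitly or fall back on the difference-quotient argument, since boundedness does not obviously follow from the stated hypotheses. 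Your closing remark that vanishing mean drift and uniform continuity of the derivatives are ``not needed'' should be read with this caveat: they become dispensable only because you have quietly traded them for boundedness of $V$.
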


\begin{proof}
For an arbitrary $i \in \{1,\ldots, d\}$ and $j \in \{1,\ldots, d\}$ we have that
\begin{equation}\label{eq_estVder1}
\begin{aligned}
\left|\frac{V_i(x + t e_j)-V_i(t)}{t} -V_{i, x_j}(x)\right|  =
\left| V_{i, x_j} (x+\theta e_j)  -V_{i, x_j}(x)\right|
\end{aligned}
\end{equation}
for some $\theta\in [0,t]$ (depending possibly on $x\in \R^d$), and hence given an $\varepsilon>0$ we may choose
a $t$ such that the right hand side of~\eqref{eq_estVder1}
does not exceed $\varepsilon/2$.
Since $V^j_t:=(V_i(x + t e_j)-V_i(t))/t$ clearly has  vanishing mean drift, then
there is an $\ell_0>0$ (depending on $t$ which is fixed) such that
for every $d$-dimensional box $Q\subset \R^d$ of side length $\ell>\ell_0$
one has
\[
\left|\int_Q V_t^j(x)\, dx\right|\leq \varepsilon \ell^d/2,
\]
and therefore
one has
\begin{align*}
\left|\int_Q V_{i,x_j}(x)\, dx\right| & \leq \left|\int_Q V_t^j(x)\, dx\right| +
 \int_Q \left| V_t^j(x)-V_{i, x_j}(x)\right|\, dx
%\\
%&
\leq  \varepsilon \ell^d/2+\varepsilon \ell^d = \varepsilon \ell^d,
\end{align*}
proving the claim.
\end{proof}

% \begin{lemma}\label{lm_meandrift_der1}%%%%% FORSE FALSA???
% If the vector field $V\in \Lip_{loc}(\R^d;\R^d)$ has vanishing mean flux, then
% $V_{x_j}$ for every $j\in \{1,\ldots, n\}$ has vanishing mean drift. In particular, if $V$ is incompressible and has vanishing mean drift, then so has $V_{x_j}$.
% \end{lemma}

% \begin{proof}
% Denoting for brevity $Q:=x+ [-\ell/2,\ell/2]^d$, we  calculate
% \begin{align*}
% \left|\int_{Q} V_{x_j}(y)\, dy\right| & =
% \left|-\int_{Q\cap\{x_i=-\ell/2\}} V(s)\,
% d\HH^{d-1}(s) +\int_{Q\cap\{x_i=\ell/2\}} V(s)\,
% d\HH^{d-1}(s)
% \right|\\
% &\leq \left|
% -\int_{Q\cap\{x_i=-\ell/2\}} V(s)\,
% d\HH^{d-1}(s)\right| +\left|\int_{Q\cap\{x_i=\ell/2\}} V(s)\,
% d\HH^{d-1}(s)
% \right| \\
% & \leq 2\varepsilon \ell^{d-1},
% \end{align*}
% whenever $L>L_0$, where $L_0=L_0(\varepsilon)$ is such
% that~\eqref{eq_smallflux1} is satisfied (for the vector field $V$).
% Thus
% \[
% \frac{1}{\ell^d}\left|\int_{x+ [-\ell/2,\ell/2]^d} V_{x_j}(y)\, dy\right| \leq 2\varepsilon/L \leq \varepsilon,
% \]
% if $L>L_0\vee 2$, proving the claim.
% \end{proof}

\begin{proposition}\label{prop_md_int0}
Let $F\subset L^1(\partial B_1(0);\HH^{d-1})$
be a compact family of functions and $V\in \Lip_{loc}(\R^d;\R^d)$ be a bounded incompressible
%locally Lipschitz 
vector field with vanishing
mean drift.
Then
\[
\int_{\partial B_1(0)} f(x) x\cdot V(\alpha x) \, d\HH^{d-1}(x)\to 0
\]
as $\alpha\to +\infty$, uniformly over $f\in F$, i.e.\
for every $\varepsilon>0$ there exists an $\alpha_0>0$ such that
\[ \left|
\int_{\partial B_1(0)} f(x) x\cdot V(\alpha x) \, d\HH^{d-1}(x)
\right|<\varepsilon
\]
for all $f\in F$, $\alpha\ge \alpha_0$.
\end{proposition}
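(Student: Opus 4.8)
The plan is to reduce the statement to the vanishing mean flux property over spherical caps established in Example~\ref{ex_md_flux3}, using a compactness (finite $\varepsilon$-net) argument to obtain uniformity over $f\in F$, and an approximation of arbitrary $L^1$ densities on $\partial B_1(0)$ by linear combinations of indicators of small spherical caps $D_r(x)$.

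First I would exploit compactness of $F$ in $L^1(\partial B_1(0);\HH^{d-1})$: given $\varepsilon>0$, cover $F$ by finitely many $L^1$-balls of radius $\varepsilon/(4\|V\|_\infty)$ centered at $f_1,\dots,f_N\in F$. Since $|x\cdot V(\alpha x)|\le \|V\|_\infty$ on $\partial B_1(0)$, for any $f\in F$ and the nearest $f_k$ we get
\[
\left|\int_{\partial B_1(0)}(f-f_k)\,x\cdot V(\alpha x)\,d\HH^{d-1}\right|\le \|V\|_\infty\|f-f_k\|_{L^1}\le \varepsilon/4,
\]
uniformly in $\alpha$. Hence it suffices to prove the convergence for each fixed $f_k$ separately, and then take the maximum of the finitely many thresholds $\alpha_0$.

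Next, for a fixed $g:=f_k\in L^1(\partial B_1(0);\HH^{d-1})$, I would approximate $g$ in $L^1$ by a finite linear combination $h=\sum_{j} c_j \mathbf{1}_{D_{r_j}(x_j)}$ of indicators of (small) spherical caps, with $\|g-h\|_{L^1}\le \varepsilon/(4\|V\|_\infty)$; such approximations exist since indicators of caps generate (up to $\HH^{d-1}$-null sets) the Borel $\sigma$-algebra on $\partial B_1(0)$, and simple functions are dense in $L^1$. The same estimate as above absorbs the error $g-h$ into $\varepsilon/4$, uniformly in $\alpha$. It then remains to control, for each single cap, the quantity $\int_{D_{r_j}(x_j)} x\cdot V(\alpha x)\,d\HH^{d-1}(x)$. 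Changing variables $y=\alpha x$ maps $D_{r_j}(x_j)\subset\partial B_1(0)$ onto $\alpha D_{r_j}(x_j)\subset \partial B_\alpha(0)$, with $x = y/\alpha = n(y)$ the outward unit normal and $d\HH^{d-1}(y)=\alpha^{d-1}d\HH^{d-1}(x)$, so
\[
\left|\int_{D_{r_j}(x_j)} x\cdot V(\alpha x)\,d\HH^{d-1}(x)\right|
=\alpha^{-(d-1)}\left|\int_{\alpha D_{r_j}(x_j)} V(y)\cdot n(y)\,d\HH^{d-1}(y)\right|,
\]
and by Example~\ref{ex_md_flux3} the right-hand side is at most $\varepsilon'\,\HH^{d-1}(D_{r_j}(x_j))$ for all $\alpha\ge R_0$, with $R_0$ independent of $j$ and $x_j$. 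Choosing $\varepsilon'$ small relative to $\varepsilon/(4\sum_j |c_j|\HH^{d-1}(D_{r_j}(x_j)))$ makes the $h$-term below $\varepsilon/4$, and collecting the three contributions ($g-h$, the $h$-term, and $f-f_k$, plus a final $\varepsilon/4$ of slack) gives the claim with $\alpha_0=\max(R_0,\dots)$.

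The main obstacle I anticipate is purely bookkeeping rather than conceptual: the number $N$ of caps in the approximation of each $f_k$, and their radii, depend on $f_k$ and hence the constant $\varepsilon'$ required in Example~\ref{ex_md_flux3} depends on $f_k$ — but since there are only finitely many $f_k$ (by compactness of $F$) this dependence is harmless, and one simply takes the worst threshold. A minor technical point is that $V$ must be evaluated on genuine spheres $\partial B_\alpha(0)$ centered at the origin, which is exactly the setting of Example~\ref{ex_md_flux3} (caps $RD_r(x)$ with $x\in\partial B_1(0)$), so no additional argument about off-center caps is needed.
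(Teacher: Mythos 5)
Your proof is correct and follows essentially the same route as the paper: the paper also reduces the single-$f$ case to indicators of spherical caps $D_r(x)$ and Example~\ref{ex_md_flux3} via the same change of variables (this is its Lemma~\ref{lm_md_int0}), and handles the uniformity over $F$ by $L^1$-compactness together with the bound $\|V\|_\infty\|f-f_k\|_1$. The only cosmetic difference is that you argue directly with a finite $\varepsilon$-net, whereas the paper argues by contradiction using sequential compactness of $F$.
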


\begin{proof}
If not, there is a sequence $\{\alpha_n\}\subset \R$,
$\lim_n \alpha_n =+\infty$, and $f_n\subset F$, such that
\[
\left|
\int_{\partial B_1(0)} f_n(x) x\cdot V(\alpha_n x) \, d\HH^{d-1}(x)
\right|\geq \varepsilon_0
\]
for some $\varepsilon_0>0$ and all $n\in \N$.
By eventually passing to a subsequence of $n$ (not relabeled) we may assume
that $f_n\to f$ in $L^1(\partial B_1(0))$ as $n\to \infty$.
But then
\begin{align*}
\left| \int_{\partial B_1(0)} f_n(x) x\cdot V(\alpha_n x) \, d\HH^{d-1}(x)
\right| & \leq \int_{\partial B_1(0)} |f_n(x)-f(x)| x\cdot V(\alpha_n x) \, d\HH^{d-1}(x) \\
&\quad + \left| \int_{\partial B_1(0)} f(x) x\cdot V(\alpha_n x) \, d\HH^{d-1}(x)
\right|\\
& \leq  \|V\|_\infty \|f_n-f\|_1 \\
&\quad +\left| \int_{\partial B_1(0)} f(x) x\cdot V(\alpha_n x) \, d\HH^{d-1}(x)
\right| \to 0
\end{align*}
since the latter integral is vanishing by Lemma~\ref{lm_md_int0}, a contradiction.
\end{proof}

The following lemma has been used in the above proof.

\begin{lemma}\label{lm_md_int0}
Let $f\in L^1(\partial B_1(0);\HH^{d-1})$ and $V\in \Lip_{loc}(\R^d;\R^d)$ be a bounded incompressible
%locally Lipschitz 
vector field with vanishing
mean drift.
Then
\[
\int_{\partial B_1(0)} f(x) x\cdot V(\alpha x) \, d\HH^{d-1}(x)\to 0
\]
as $\alpha\to +\infty$.
\end{lemma}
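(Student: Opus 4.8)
The plan is to reduce the statement to a flux computation. The key observation is that $x\cdot V(\alpha x)$, when integrated over $\partial B_1(0)$ against a test function, is closely related to the flux of $V$ through spheres of radius $\alpha$; and the vanishing mean drift condition together with incompressibility controls such fluxes by Example~\ref{ex_md_flux3}. First I would handle the case of a simple function $f=\mathbf{1}_{D_r(x_0)}$, i.e.\ a characteristic function of a spherical cap $D_r(x_0)\subset\partial B_1(0)$: by the change of variables $y=\alpha x$ one has
\[
\int_{D_r(x_0)} x\cdot V(\alpha x)\,d\HH^{d-1}(x) = \frac{1}{\alpha^{d-1}}\int_{\alpha D_r(x_0)} \frac{y}{\alpha}\cdot V(y)\,d\HH^{d-1}(y) = \frac{1}{\alpha^{d}}\int_{\alpha D_r(x_0)} y\cdot V(y)\,d\HH^{d-1}(y),
\]
and on the sphere $\partial B_\alpha(0)$ the outer unit normal at $y$ is exactly $y/|y|=y/\alpha$, so $y\cdot V(y)=\alpha\, n(y)\cdot V(y)$. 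Hence the integral equals $\alpha^{1-d}\int_{\alpha D_r(x_0)} n(y)\cdot V(y)\,d\HH^{d-1}(y)$, which is $\HH^{d-1}(D_r(x_0))$ times the mean flux of $V$ through $\alpha D_r(x_0)$, and this tends to $0$ as $\alpha\to+\infty$ by Example~\ref{ex_md_flux3}.

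Next I would pass from caps to general $f\in L^1(\partial B_1(0);\HH^{d-1})$ by a standard density/approximation argument. Given $\varepsilon>0$, choose a finite linear combination $g=\sum_{k=1}^N c_k\mathbf{1}_{D_{r_k}(x_k)}$ of cap indicators (finite unions of caps generate the Borel $\sigma$-algebra and are dense in $L^1$) with $\|f-g\|_1\le \varepsilon/(2\|V\|_\infty)$; this is possible since caps form a generating family closed under the relevant operations. Then
\[
\left|\int_{\partial B_1(0)} f(x)\,x\cdot V(\alpha x)\,d\HH^{d-1}(x)\right| \le \|V\|_\infty\|f-g\|_1 + \left|\int_{\partial B_1(0)} g(x)\,x\cdot V(\alpha x)\,d\HH^{d-1}(x)\right|,
\]
using $|x\cdot V(\alpha x)|\le |V(\alpha x)|\le\|V\|_\infty$ on $\partial B_1(0)$. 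The first term is $\le\varepsilon/2$, and the second is a finite sum of cap integrals each of which tends to $0$ by the first step, so it is $\le\varepsilon/2$ for $\alpha$ large. This gives the claim.

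The main obstacle — really the only nonroutine point — is getting the cap estimate to be \emph{uniform in the base point} $x_0\in\partial B_1(0)$, so that the error bound $\varepsilon$ in Example~\ref{ex_md_flux3} does not degenerate as the cap moves around the sphere; but this uniformity is precisely what Example~\ref{ex_md_flux3} already asserts (the $R_0$ there is chosen independently of $x\in\partial B_1(0)$), so the argument goes through. A secondary minor point is to check that the finitely many caps appearing in $g$ can be taken essentially disjoint, or at least that overlaps do not spoil the estimate; since we only need an upper bound, writing $g$ as a signed combination and estimating term by term suffices and no disjointness is actually required.
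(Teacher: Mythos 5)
Your proposal is correct and follows essentially the same route as the paper: reduce to characteristic functions of spherical caps $\mathbf{1}_{D_r(x_0)}$ by $L^1$-density (using the uniform bound $\|V\|_\infty$ to control the approximation error), then rescale $y=\alpha x$ to recognize the integral as the flux of $V$ through $\alpha D_r(x_0)$ and invoke Example~\ref{ex_md_flux3}. The only difference is that you spell out the density argument and the (here unnecessary, since the caps are fixed once $f$ is) uniformity in the base point, which the paper leaves implicit.
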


\begin{proof}
It is enough to prove the statement for $f$ from a family of functions having dense linear span in
$L^1(\partial B_1(0);\HH^{d-1})$, in particular, for $f$
% a function of the
% form
% $f=\sum_{i=1}^N a_i \mathbf{1}_{D_{r_i}(x_i)}$
% for some $N\in \N$, $a_i\in \R$, $r_i\in (0,2]$, $x_i\in \partial B_1(0)$,
% the sets $D_{r_i}(x_i)\subset \partial B_r(0)$ being defined in Example~\ref{ex_md_flux3}. Therefore it is enough to show it for $f$
just a characteristic function of the form
$f=\mathbf{1}_{D_r(x)}$ for some $r\in (0,2]$, $x\in \partial B_1(0)$, where
$D_r(x)$ is defined in Example~\ref{ex_md_flux3}. The claim for this case follows then from the change of variables
\[
\int_{D_{r}(x)} y\cdot V(\alpha y) \, d\HH^{d-1}(y)=\frac{1}{\alpha^{d-1}}\int_{\alpha D_{r}(x)}
V(y)\cdot n(y)\, d\HH^{d-1}(y),
\]
where $n$ stands for the external normal to $\partial B_1(0)$, and from Example~\ref{ex_md_flux3}.
\end{proof}

\subsection{Auxiliary estimates of integral operators}

We will also need the following technical assertions on estimates of integral operators involving vector fields with vanishing mean drift.

\begin{lemma}\label{lm_NPsmall2}
Let $p \in ((d-1)/2, d/2)$, and
$K\colon \R^d\times\R^d\to \R$ be a function locally Lipschitz outside
the diagonal $\{(x,y)\in \R^d\times\R^d\colon x=y\}$
and uniformly continuous outside every
$R$-neighborhood of the diagonal (for every $R>0$),
with
\[
K(x,y)=\frac{A(x,y)}{|x-y|^k}, \quad k< d,
\]
for all $(x,y)\in \R^d\times\R^d$,
for some $A\in L^\infty(\R^d\times\R^d)$ continuous outside the diagonal.
% and $0<k<d+1$.
 Suppose that $V\in \Lip_{loc}(\R^d;\R^d)$ is a bounded incompressible
vector field with vanishing mean drift.
Then, with the notation $n(x):=x/|x|$ one has
\begin{equation}\label{eq_2WA4}
\Psi(x):=\int_{\R^d} K(n(x),y)
\frac{y\cdot V(\alpha |x| y)}{(|y|^2+|x|^{-2})^{p+1}}\, dy\to 0
\end{equation}
as $\alpha\to +\infty$ uniformly in $x\not \in
B_1(0)\subset \R^d$.
\end{lemma}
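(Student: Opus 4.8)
The plan is to reduce the convergence $\Psi(x)\to 0$ to an application of the uniform version of the vanishing mean drift on spheres, namely Proposition~\ref{prop_md_int0}. First I would pass to polar coordinates in the variable $y$: writing $y=\rho\omega$ with $\rho>0$ and $\omega\in\partial B_1(0)$, and using $y\cdot V(\alpha|x|y)=\rho\,\omega\cdot V(\alpha|x|\rho\,\omega)$, one rewrites
\[
\Psi(x)=\int_0^\infty \frac{\rho^{d}}{(\rho^2+|x|^{-2})^{p+1}}
\left(\int_{\partial B_1(0)} K(n(x),\rho\omega)\,\omega\cdot V\bigl((\alpha|x|\rho)\,\omega\bigr)\,d\HH^{d-1}(\omega)\right)d\rho.
\]
For fixed $\rho$ the inner integral has exactly the form treated in Proposition~\ref{prop_md_int0}, with the family $F$ consisting of the functions $\omega\mapsto K(n(x),\rho\omega)=A(n(x),\rho\omega)/(\rho^{d-1}|\,n(x)/\rho-\omega|^{k}\cdots)$; one must check this family (as $x$ ranges over $B_1(0)^c$, i.e.\ $n(x)$ over $\partial B_1(0)$, and for $\rho$ in a fixed compact subset of $(0,\infty)$) is relatively compact in $L^1(\partial B_1(0);\HH^{d-1})$. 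Compactness follows from the stated regularity of $A$ and $K$: away from $\omega=n(x)/\rho$ the integrand is uniformly continuous, while near the singularity the bound $|K(n(x),\rho\omega)|\le \|A\|_\infty\rho^{-k}|\,n(x)-\rho\omega|^{-k}$ with $k<d-1<d$ — wait, only $k<d$ is assumed, so near the singularity on the $(d-1)$-sphere one has local integrability only when $k<d-1$; this is where the hypothesis $p>(d-1)/2$ and the weight must be used to rescale, so I would instead keep $\alpha|x|\rho$ together and substitute $s=\alpha|x|\rho$ to move the singularity, treating the regime $\rho$ small separately by the weight $(\rho^2+|x|^{-2})^{-(p+1)}$.

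The cleaner route, which I would actually carry out, is: split the $\rho$-integral at $\rho=1$. On $\rho\ge 1$ the weight gives $\rho^{d}/(\rho^2+|x|^{-2})^{p+1}\le \rho^{d-2p-2}$, which is integrable at $+\infty$ precisely because $2p+2-d>1$ (i.e.\ $p>(d-1)/2$), and on this range $n(x)/\rho$ stays away from $\partial B_1(0)$ uniformly, so $K(n(x),\rho\,\cdot)$ lies in a uniformly bounded, equicontinuous, hence $L^1(\partial B_1(0))$-compact family; thus the inner integral tends to $0$ as $\alpha\to\infty$ uniformly in $n(x)$ by Proposition~\ref{prop_md_int0}, and dominated convergence (with dominating function $2\|A\|_\infty\|V\|_\infty \rho^{d-1-k}\cdot\rho^{d-2p-2}$, integrable at infinity) finishes this piece. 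On $0<\rho\le 1$, bound the inner integral crudely by $\|V\|_\infty\int_{\partial B_1(0)}|K(n(x),\rho\omega)|\,d\HH^{d-1}(\omega)\le C\rho^{-k}$ (using $\int_{\partial B_1(0)}|n(x)/\rho-\omega|^{-k}\,d\HH^{d-1}\le C\rho^{k}$ for $\rho\le 1$ when $k<d-1$, or a log/power bound otherwise); then the full contribution of $\rho\le1$ is at most
\[
C\|V\|_\infty\int_0^1 \frac{\rho^{d-k}}{(\rho^2+|x|^{-2})^{p+1}}\,d\rho
\le C\|V\|_\infty |x|^{2(p+1)}\int_0^1 \rho^{d-k}\,d\rho,
\]
which, however, does \emph{not} vanish in $\alpha$ — so instead one must on $\rho\le 1$ again invoke Proposition~\ref{prop_md_int0} after noticing that the argument $\alpha|x|\rho$ of $V$ still tends to $+\infty$ only when $\rho$ is bounded below; hence I would further split at $\rho=\eta$ and use that for $\rho\le\eta$ the weighted integral is $\le C\eta^{d-k+1}$ uniformly (choosing $\eta$ small first), while for $\eta\le\rho\le 1$ Proposition~\ref{prop_md_int0} applies with the compact family indexed by $(n(x),\rho)\in\partial B_1(0)\times[\eta,1]$.

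The main obstacle is this last point: controlling the contribution of small $\rho$, where the scaled argument $\alpha|x|\rho$ of $V$ need not be large and where the kernel $K$ has its singularity colliding with the sphere of integration as $\rho\to 0$ (since $n(x)/\rho\to\infty$ off the sphere, actually the singularity moves away — one should double-check the geometry: $|n(x)/\rho-\omega|\ge 1/\rho-1>0$ for $\rho<1$, so there is in fact \emph{no} singularity for $\rho<1$ and $|K(n(x),\rho\omega)|\le \|A\|_\infty(1/\rho-1)^{-k}=\|A\|_\infty\rho^{k}(1-\rho)^{-k}$, which is bounded on $\rho\le 1/2$). Thus the genuine difficulty is only at $\rho\to\infty$ and is handled by the weight exponent condition $p\in((d-1)/2,d/2)$ (the lower bound ensures integrability of $\rho^{d-2p-2}$ at infinity, the upper bound $p<d/2$ ensures $\mu=\psi\,dx$ is still infinite — not needed here — but more relevantly keeps $\Psi$ finite); the uniformity in $x\notin B_1(0)$ is then automatic since $n(x)$ ranges over the compact sphere $\partial B_1(0)$ and $\alpha|x|\ge\alpha$. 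Collecting the two estimates and letting $\alpha\to\infty$ (after $\eta\to 0$) gives $\sup_{x\notin B_1(0)}|\Psi(x)|\to 0$, as claimed. $\qed$
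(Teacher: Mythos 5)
Your overall frame (polar coordinates, Proposition~\ref{prop_md_int0} on the angular integrals, dominated convergence in the radial variable using $p>(d-1)/2$ for integrability at infinity) matches the paper's treatment of the region away from the origin, but there are two genuine gaps, and the first one is exactly the key idea of the paper's proof. Your handling of the small-$\rho$ region fails: on $\{\rho\le\eta\}$ the kernel is indeed bounded, but the remaining weighted integral is $\int_0^\eta \rho^d(\rho^2+|x|^{-2})^{-(p+1)}\,d\rho$, and substituting $\rho=t/|x|$ this equals $|x|^{2p+1-d}\int_0^{\eta|x|}t^d(t^2+1)^{-(p+1)}\,dt$, which blows up like $|x|^{2p+1-d}\to+\infty$ as $|x|\to\infty$ precisely because $p>(d-1)/2$; no prior choice of $\eta$ gives your claimed uniform bound $C\eta^{d-k+1}$, so uniformity in $x\notin B_1(0)$ cannot be obtained by size estimates alone. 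The paper resolves this with a cancellation you do not use: by Lemma~\ref{lm_2WC1div0} (incompressibility of $V$ makes the flux through every sphere vanish) the constant $K(n(x),0)$ integrates to exactly zero against $y\,(|y|^2+|x|^{-2})^{-(p+1)}\cdot V(\alpha|x|y)$ on $B_\rho(0)$, so one may replace $K(n(x),y)$ by $K(n(x),y)-K(n(x),0)$ there; local Lipschitz continuity of $K$ off the diagonal gives the extra factor $|y|$, and the resulting bound $C\int_0^\rho r^{d-1-2p}\,dr$ is finite and small uniformly in $x$ because $p<d/2$. This is where both the incompressibility hypothesis and the upper bound $p<d/2$ actually enter; your diagnosis that ``the genuine difficulty is only at $\rho\to\infty$'' is incorrect.

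The second gap is the singularity of the kernel at $y=n(x)$, i.e.\ at $\rho=1$, $\omega=n(x)$ in your polar coordinates. Your claim that for $\rho\ge 1$ the functions $\omega\mapsto K(n(x),\rho\omega)$ form a uniformly bounded, equicontinuous (hence $L^1(\partial B_1(0))$-compact) family is false: the singular point $n(x)/\rho$ lies at distance $1-1/\rho$ from the unit sphere, which tends to $0$ as $\rho\to1^+$, so $\sup_\omega|K(n(x),\rho\omega)|\sim\|A\|_\infty|\rho-1|^{-k}$; worse, in the intended application (Lemma~\ref{lm_WCsmall}) one has $k=d-1$, so the angular $L^1$-norm at $\rho=1$ actually diverges, and neither Proposition~\ref{prop_md_int0} nor your proposed dominating function $\rho^{d-1-k}\rho^{d-2p-2}$ is available near $\rho=1$. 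The paper circumvents this by truncating the kernel: it replaces $K$ by $K_R$ (capped on $B_R(n(x))$), bounds the truncation error through the $d$-dimensional integrability $\int_{B_R(n(x))}|n(x)-y|^{-k}\,dy<\delta$ (only $k<d$ is needed, and on that ball $|y|\ge\rho$ keeps the weight harmless), and only then applies Proposition~\ref{prop_md_int0} to the bounded, uniformly continuous truncated family together with an $x$-independent integrable radial majorant. Without both the cancellation at the origin and the truncation at the kernel singularity, the argument as proposed does not close.
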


\begin{proof}
Without loss of generality (up to a small increase of $k$) we may assume
$A$ to be continuous over the whole $\R^d\times \R^d$.
Let $x\in B_1^c(0)\subset \R^d$ and choose an arbitrary $\varepsilon>0$.
% and denote for brevity
%\[
%C(p):= \sup_{y\in \R^d} \frac{|y|}{(|y|^2+1)^{p+1}}
%\]
%(note that $C(p)<+\infty$ because $p>-1/2$).

Fixed and arbitrary $\rho\in (0,1)$, the function
$F\colon B_\rho(0)\to \R^d$ defined by
\[
F(y):=K(n(x),y)-K(n(x),0)
\]
is Lipschitz (because of local Lipschitz continuity of $K$ outside the diagonal), while
clearly, $F(0)=0$, so that
one has in particular $|F(y)|\leq C |y|$ for some $C=C(\rho)>0$ (independent on $x$)
whenever $|y|\leq \rho$. Therefore,
\begin{align*}
\left| \int_{B_\rho(0)} \right. &
\left. \left(K(n(x),y)-K(n(x),0)\right)\frac{y\cdot V(\alpha |x| y)}{(|y|^2+|x|^{-2})^{p+1}}\, dy\right| \\
&     \leq %C(p)
C \|V\|_\infty
\int_{B_\rho(0)}
\frac{|y|^2}{(|y|^2+|x|^{-2})^{p+1}}\, dy\\
&\leq %C(p)
C \|V\|_\infty
\int_{B_\rho(0)}
\frac{|y|^2}{|y|^{2(p+1)}}\, dy
%\\
%&
=%C(p)
C d\omega_d \|V\|_\infty \int_0^{\rho}
\frac{\,dr}{r^{2p-d+1}},
\end{align*}
and recalling that $p< d/2$ we conclude that one can choose a $\rho>0$ so that
\[
\left| \int_{B_\rho(0)}\left(K(n(x),y)-K(n(x),0)\right)
\frac{y\cdot V(\alpha |x| y)}{(|y|^2+|x|^{-2})^{p+1}}\, dy\right|\leq \varepsilon/3.
\]
But by Lemma~\ref{lm_2WC1div0} one has
\begin{align*}
 \int_{B_\rho(0)}\left(K(n(x),y)-K(n(x),0)\right)  &
\frac{y\cdot V(\alpha |x| y)}{(|y|^2+|x|^{-2})^{p+1}}\, dy\\
&= \int_{B_\rho(0)} K(n(x),y)
\frac{y\cdot V(\alpha |x| y)}{(|y|^2+|x|^{-2})^{p+1}}\, dy,
\end{align*}
and hence
\begin{equation}\label{eq_2WA5}
 \left| \int_{B_\rho(0)} K(n(x),y)
\frac{y\cdot V(\alpha |x| y)}{(|y|^2+|x|^{-2})^{p+1}}\, dy\right|\leq \varepsilon/3.
\end{equation}

Choose now an $R\in (0,1-\rho)$ such that
\[
\int_{B_R(n(x))}\frac{\, dy}{|n(x)-y|^k}  < \delta,
\]
for some $\delta>0$ to be chosen later (it is only here that we use the
assumption on $k$).
Denoting
\[
K_R(n(x),y):= \left\{
 \begin{array}{rl}
 K(n(x),y), & y\in B_R^c(n(x)),\\
A(n(x),y)/R^k, & y\in B_R(n(x)),
 \end{array}
 \right.
\]
we get that $(x,y)\mapsto K_R(n(x),y)$ is bounded and uniformly
continuous in $B_1^c(0)\times \R^d$.
%and $\|\psi_R\|_\infty\leq 1/R^{k-1}$.
Now, recalling that $\rho< |y|<1$ when $y\in B_R(n(x))$, we get
\begin{align*}
\left| \int_{B_\rho^c(0)}\right. & \left. K(n(x),y)
\frac{y\cdot V(\alpha |x| y)}{(|y|^2+|x|^{-2})^{p+1}}\, dy\right| \\
 & \leq
 \left| \int_{B_\rho^c(0)\cap B_R^c(n(x))} K(n(x),y)
 \frac{y\cdot V(\alpha |x| y)}{(|y|^2+|x|^{-2})^{p+1}}\, dy\right|
  +\frac{\delta \|V\|_\infty\|A\|_\infty}{\rho^{2(p+1)}}\\
   & \leq
 \left| \int_{B_\rho^c(0)} K_R (n(x),y) \frac{y\cdot V(\alpha |x| y)}{(|y|^2+|x|^{-2})^{p+1}}\, dy\right|\\
 & \qquad +  \left| \int_{B_R(0)} \frac{A(n(x),y)}{R^k} \frac{y\cdot V(\alpha |x| y)}{(|y|^2+|x|^{-2})^{p+1}}\, dy\right| +\frac{\delta \|V\|_\infty\|A\|_\infty}{\rho^{2(p+1)}}\\
 &     \leq
 \left| \int_{B_\rho^c(0)} K_R (n(x),y) \frac{y\cdot V(\alpha |x| y)}{(|y|^2+|x|^{-2})^{p+1}}\, dy\right|\\
 & \qquad + \omega_d R^{d-k}\|V\|_\infty\|A\|_\infty \frac{1}{\rho^{2(p+1)}} +
 \frac{\delta \|V\|_\infty\|A\|_\infty}{\rho^{2(p+1)}},
\end{align*}
so that choosing $\delta$ (depending on $\rho$ and $\varepsilon$) and
$R$ (depending on $\delta$, $\rho$ and $\varepsilon$) sufficiently small we will have
\begin{equation}\label{eq_2WA6}
\begin{aligned}
\left| \int_{B_\rho^c(0)} K(n(x),y) \right. &\left.
\frac{y\cdot V(\alpha |x| y)}{(|y|^2+|x|^{-2})^{p+1}}\, dy\right| \\
&\leq
\left| \int_{B_\rho^c(0)} K_R (n(x),y) \frac{y\cdot V(\alpha |x| y)}{(|y|^2+|x|^{-2})^{p+1}}\, dy\right| + \varepsilon/3.
\end{aligned}
\end{equation}

Finally,
\begin{equation}\label{eq_2WA6a}
\begin{aligned}
\int_{B_\rho^c(0)} K_R & (n(x),y)
   \frac{y\cdot V(\alpha |x| y)}{(|y|^2+|x|^{-2})^{p+1}}\, dy \\
 & = \int_\rho^{+\infty} \frac{r^d\,dr}{(r^2+|x|^{-2})^{p+1}} \int_{\partial B_1(0)}
K_R (n(x),r s) s \cdot V(\alpha |x| rs)\,d\HH^{d-1}(s).
\end{aligned}
\end{equation}
Recalling that the family of functions $\{s\in \partial B_1(0)
\mapsto K_R (n(x),r s)\colon r\geq \rho, x\in B_\rho^c(0) \}$ on $\partial B_1(0)$
 is bounded and equicontinuous, we get from
Proposition~\ref{prop_md_int0} that
\begin{align*}
\int_{\partial B_1(0)} K_R(n(x),rs)  s\cdot V(\beta s)\,d\HH^{d-1}(s) \to 0
\end{align*}
uniformly in $x\in B_1^c(0)$ and $r\geq\rho$ as $\beta\to +\infty$, so that
in particular
\begin{align*}
\int_{\partial B_1(0)} K_R(n(x),rs)  s\cdot V(\alpha |x|  r s)\,d\HH^{d-1}(s) \to 0
\end{align*}
uniformly in $x\in B_1^c(0)$ and $r\geq \rho$ as $\alpha\to +\infty$.
Thus, observing that
\[
\frac{r^d}{(r^2+|x|^{-2})^{p+1}}\leq \frac{r^d}{(r^2+1)^{p+1}}
\]
for $x\in B_1^c(0)$ and $r\geq 0$, the function on the right-hand side of the above inequality being integrable over $(0,+\infty)$ (because $p>(d-1)/2$),
we get from~\eqref{eq_2WA6a}
the existence of some $\bar\alpha>0$
(independent of $x$) such that
the estimate
\begin{equation}\label{eq_2WA7}
\begin{aligned}
\left| \int_{B_\rho^c(0)} K_R (n(x),y) \right. &\left.
   \frac{y\cdot V(\alpha |x| y)}{(|y|^2+|x|^{-2})^{p+1}}\, dy \right|
&\leq
 \varepsilon/3,
\end{aligned}
\end{equation}
holds for all $x\in B_1^c(0)$
once $\alpha\geq \bar\alpha$.
%for some $\bar\alpha>0$
%independent of $x$. %is sufficiently large independent on $x$.
Combining the estimates~\eqref{eq_2WA5},~\eqref{eq_2WA6} and~\eqref{eq_2WA7}
we arrive from~\eqref{eq_2WA4} to $|\Psi(x)|\leq \varepsilon$
for %sufficiently large
$\alpha\geq \bar\alpha$, with $\bar\alpha>0$
independent on
$x\in B_1(0)^c$, as claimed.
\end{proof}

The following lemma has been used in the above proof.

\begin{lemma}\label{lm_2WC1div0}
For an incompressible vector field $V\in \Lip_{loc}(\R^d;\R^d)$ one has
\[
\int_{B_\rho(0)} \frac{y\cdot V(\alpha|x|y)}{(|y|^2+|x|^{-2})^p}=0.
\]
for every $\rho>0$ and every $\alpha>0$.
\end{lemma}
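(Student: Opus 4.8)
The plan is to reduce the identity to a statement about the divergence of a suitable vector field and then apply the divergence theorem. First I would set $\beta:=\alpha|x|>0$ (treating $x$ as fixed, so $|x|^{-2}$ is just a positive constant, call it $a^2$ with $a:=|x|^{-1}$) and rewrite the integrand. The key observation is that the radially symmetric weight $g(y):=(|y|^2+a^2)^{-p}$ has gradient $\nabla g(y) = -2p\,(|y|^2+a^2)^{-p-1}\,y$, so that the integrand $y\cdot V(\beta y)\,(|y|^2+a^2)^{-p-1}$ is, up to the constant factor $-1/(2p)$, equal to $\nabla g(y)\cdot V(\beta y)$. Hence
\[
\int_{B_\rho(0)} \frac{y\cdot V(\alpha|x|y)}{(|y|^2+|x|^{-2})^{p+1}}\,dy
= -\frac{1}{2p}\int_{B_\rho(0)} \nabla g(y)\cdot V(\beta y)\,dy.
\]
Wait — I must be careful about which exponent appears. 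As stated the lemma has $(|y|^2+|x|^{-2})^p$ in the denominator, not $p+1$; the matching gradient computation then uses $\tilde g(y):=(|y|^2+a^2)^{-(p-1)}$ with $\nabla\tilde g(y) = -2(p-1)(|y|^2+a^2)^{-p}\,y$, so that $y\cdot V(\beta y)(|y|^2+a^2)^{-p} = -\frac{1}{2(p-1)}\nabla\tilde g(y)\cdot V(\beta y)$ (and when $p=1$ one instead uses $\log$; but in the application $p\in((d-1)/2,d/2)$ with $d\ge 2$ so $p>1/2$, and the relevant exponent in Lemma~\ref{lm_NPsmall2} is genuinely $p+1$, so the integrand there with denominator power $p+1$ matches $g$ with power $p$ — consistent with the statement of this lemma having power $p$). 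In either case the integrand is a constant multiple of $\nabla(\text{radial weight})\cdot V(\beta\,\cdot)$.

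Next I would integrate by parts. Since $V$ is locally Lipschitz, $y\mapsto V(\beta y)$ is Lipschitz on $\overline{B_\rho(0)}$, and $\mathrm{div}_y\,V(\beta y) = \beta\,(\mathrm{div}\,V)(\beta y) = 0$ a.e.\ because $V$ is incompressible. The radial weight $w(y)$ (be it $g$ or $\tilde g$) is smooth on all of $\R^d$ (the shift by $a^2>0$ removes any singularity at the origin). Therefore, using the product rule $\mathrm{div}\big(w(y)\,V(\beta y)\big) = \nabla w(y)\cdot V(\beta y) + w(y)\,\mathrm{div}_y V(\beta y) = \nabla w(y)\cdot V(\beta y)$, the divergence theorem on $B_\rho(0)$ gives
\[
\int_{B_\rho(0)} \nabla w(y)\cdot V(\beta y)\,dy
= \int_{B_\rho(0)} \mathrm{div}\big(w(y)V(\beta y)\big)\,dy
= \int_{\partial B_\rho(0)} w(y)\,V(\beta y)\cdot n(y)\,d\HH^{d-1}(y),
\]
where $n(y)=y/|y|$ is the outer unit normal. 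On $\partial B_\rho(0)$ the weight $w$ is constant, $w(y) = (\rho^2+a^2)^{-(p-1)}$, so it factors out, leaving $(\rho^2+a^2)^{-(p-1)}\int_{\partial B_\rho(0)} V(\beta y)\cdot n(y)\,d\HH^{d-1}(y)$. But this last boundary integral equals $\int_{B_\rho(0)} \mathrm{div}_y V(\beta y)\,dy = 0$ by the divergence theorem again (or directly by incompressibility), which yields the claim.

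I do not expect a genuine obstacle here; the only points needing a line of care are (i) checking that the weight has no singularity — guaranteed by $a^2=|x|^{-2}>0$ for $x\neq 0$, though the lemma as used is only invoked for $x\notin B_1(0)$ anyway, and in fact $\rho>0$ arbitrary with $\alpha>0$ arbitrary is all that is claimed; (ii) justifying the integration by parts with merely Lipschitz $V(\beta\cdot)$, which is standard since Lipschitz functions lie in $W^{1,\infty}_{loc}$ and the Gauss–Green formula holds for $W^{1,1}$ vector fields on balls; and (iii) making sure the product-rule / divergence-theorem manipulation is applied consistently with whichever power ($p$ versus $p-1$) actually appears. The cleanest write-up simply notes that $w(y)V(\beta y)$ is Lipschitz with a.e.\ divergence equal to $\nabla w\cdot V(\beta\cdot)$, applies Gauss–Green, and then observes the boundary term vanishes because $w$ is constant on the sphere and $V$ has zero flux through it — the latter itself being Gauss–Green applied to the incompressible $V(\beta\cdot)$.
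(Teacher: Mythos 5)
Your argument is correct, and it is a mildly different route from the paper's. The paper simply rewrites the integral in polar coordinates,
\[
\int_{B_\rho(0)} \frac{y\cdot V(\alpha|x|y)}{(|y|^2+|x|^{-2})^p}\,dy
=\int_0^\rho \frac{r^{d}\,dr}{(r^2+|x|^{-2})^p}\int_{\partial B_1(0)} s\cdot V(\alpha|x| r s)\,d\HH^{d-1}(s),
\]
and observes that the inner integral vanishes for every $r$, being the flux of the divergence-free field $y\mapsto V(\alpha|x|ry)$ through the unit sphere; incompressibility is used only through this zero-flux fact, once per sphere, and the radial factor never needs a primitive. You instead recognize the radial factor as (a constant times) the gradient of a radial weight, integrate by parts using $\mathrm{div}_y V(\beta y)=0$ a.e.\ in the bulk, and then kill the single boundary term on $\partial B_\rho(0)$ by the same zero-flux fact; this buys you a proof with no polar-coordinate rewriting, at the price of justifying Gauss--Green for the Lipschitz field $w(\cdot)V(\beta\cdot)$ (standard, as you note, since Lipschitz maps are in $W^{1,\infty}_{loc}$ and the paper's incompressibility is pointwise a.e.\ for locally Lipschitz fields) and of the small case distinction at $p=1$. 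The latter could be avoided entirely by taking $w(y):=h(|y|^2)$ with $h(t):=\tfrac12\int_0^t(s+a^2)^{-p}\,ds$, which gives $\nabla w(y)=y(|y|^2+a^2)^{-p}$ for every exponent; also, your side discussion of ``$p$ versus $p+1$'' is immaterial, since both your argument and the paper's work verbatim for any power, and the lemma is indeed invoked in Lemma~\ref{lm_NPsmall2} with the exponent $p+1$. Both proofs tacitly assume $x\neq 0$ (otherwise $|x|^{-2}$ is meaningless), which is harmless since the lemma is only used for $x\in B_1^c(0)$.
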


\begin{proof}
We write
\[
\int_{B_\rho(0)} \frac{y\cdot V(\alpha|x|y)}{(|y|^2+|x|^{-2})^p} =
\int_0^\rho \frac{r^k\,dr}{(r^2+|x|^{-2})^p} \int_{\partial B_1(0)}
s \cdot V(\alpha|x|rs)\, d\HH^{d-1}(s),
\]
and recalling that $V$ is divergence-free, and hence so is the vector field
$y\mapsto   V(\alpha |x| r y)$ (with $\alpha$, $r$ and $x$ fixed), one has
\[\int_{\partial B_1(0)}
s \cdot V(\alpha|x|rs)\, d\HH^{d-1}(s)=0
\]
implying the thesis.
\end{proof}

At last we need the following computation.

\begin{lemma}\label{lm_NPsmall1}
Let $p >(d-1)/2$, and
$K\colon \R^d\times\R^d\to \R$ be a function uniformly continuous outside every
$R$-neighborhood of the diagonal $\{(x,y)\in \R^d\times\R^d\colon x=y\}$ (for every $R>0$), with
\[
K(x,y)=\frac{A(x,y)}{|x-y|^k}, \quad k< d,
\]
for some $A\in L^\infty(\R^d\times\R^d)$ continuous outside the diagonal and
% and $0<k<d+1$.
for all $(x,y)\in \R^d\times\R^d$.
 Suppose that $V\in \Lip_{loc}(\R^d;\R^d)$ is a bounded incompressible
vector field with vanishing mean drift.
Then
\begin{equation}\label{eq_2WA0}
\tilde \Psi(x):=\int_{\R^d} K(x,y)
\frac{y\cdot V(\alpha y)}{(|y|^2+1)^{p+1}}\, dy\to 0
\end{equation}
as $\alpha\to +\infty$ uniformly in $x\in \bar B_1(0)\subset \R^d$.
\end{lemma}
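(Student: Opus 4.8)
The plan is to run the same three-zone argument as in the proof of Lemma~\ref{lm_NPsmall2}, noting at the outset that the situation is in fact easier: the weight $(|y|^2+1)^{-(p+1)}$ is bounded by $1$, so there is no singularity at $y=0$ to fight, and consequently no need either for the cancellation coming from $\div V=0$ or for the upper bound $p<d/2$; the only genuine singularity is that of $K$ along the diagonal $y=x$, and $x$ stays in $\bar B_1(0)$. First I would reduce, exactly as in Lemma~\ref{lm_NPsmall2} (up to a small increase of $k$, still keeping $k<d$), to the case where $A$ is continuous on all of $\R^d\times\R^d$, hence uniformly continuous on compact sets. Fix $\varepsilon>0$.

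Second, I would isolate the diagonal: for $R\in(0,1)$ write $\tilde\Psi(x)=\int_{B_R(x)}(\cdots)\,dy+\int_{B_R^c(x)}(\cdots)\,dy$. On $B_R(x)$, since $x\in\bar B_1(0)$ forces $|y|\le 1+R$, whence $(|y|^2+1)^{-(p+1)}\le1$ and $|y\cdot V(\alpha y)|\le(1+R)\|V\|_\infty$, the first term is bounded by $\|A\|_\infty(1+R)\|V\|_\infty\int_{B_R(x)}|x-y|^{-k}\,dy=\|A\|_\infty(1+R)\|V\|_\infty\frac{d\omega_d}{d-k}R^{d-k}$; this is where $k<d$ enters, and it is $\le\varepsilon/4$ for $R$ small, uniformly in $x\in\bar B_1(0)$. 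On $B_R^c(x)$ I would replace $K$ by the truncated kernel
\[
K_R(x,y):=
\begin{cases}
K(x,y), & y\in B_R^c(x),\\
A(x,y)R^{-k}, & y\in B_R(x),
\end{cases}
\]
which is bounded by $\|A\|_\infty R^{-k}$ and, $A$ being now continuous, continuous on $\R^d\times\R^d$. Then $\int_{B_R^c(x)}K(x,y)(\cdots)\,dy=\int_{\R^d}K_R(x,y)(\cdots)\,dy-\int_{B_R(x)}A(x,y)R^{-k}(\cdots)\,dy$, and the last integral is again $O(R^{d-k})$, hence $\le\varepsilon/4$ for $R$ small. It thus remains to estimate $\int_{\R^d}K_R(x,y)\frac{y\cdot V(\alpha y)}{(|y|^2+1)^{p+1}}\,dy$.

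Third, I would pass to polar coordinates $y=rs$, $s\in\partial B_1(0)$, turning this into $\int_0^\infty\frac{r^d\,dr}{(r^2+1)^{p+1}}\int_{\partial B_1(0)}K_R(x,rs)\,s\cdot V(\alpha rs)\,d\HH^{d-1}(s)$; the radial weight $r^d(r^2+1)^{-(p+1)}$ is integrable on $(0,\infty)$ precisely because $p>(d-1)/2$. The inner integral is bounded by $\|A\|_\infty R^{-k}\|V\|_\infty\HH^{d-1}(\partial B_1(0))$, so the part $r\in(0,\rho)$ contributes $O(\rho^{d+1})$, which is $\le\varepsilon/4$ for $\rho$ small, uniformly in $\alpha>0$ and $x\in\bar B_1(0)$ (this is the step that in Lemma~\ref{lm_NPsmall2} instead required the identity of Lemma~\ref{lm_2WC1div0} together with $p<d/2$). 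For $r\ge\rho$ I would invoke Proposition~\ref{prop_md_int0}: the family $\{s\mapsto K_R(x,rs):x\in\bar B_1(0),\,r\ge\rho\}$ is bounded and equicontinuous, hence precompact in $L^1(\partial B_1(0);\HH^{d-1})$ — one checks this as in Lemma~\ref{lm_NPsmall2}, splitting into $r\in[\rho,M]$, where uniform continuity of $K_R$ on the compact set $\bar B_1(0)\times\bar B_M(0)$ applies, and $r>M$, where the functions tend to $0$ in $L^\infty$ as $M\to\infty$. Applying Proposition~\ref{prop_md_int0} to the closure of this family, with the large parameter $\alpha r\to\infty$, gives $\int_{\partial B_1(0)}K_R(x,rs)\,s\cdot V(\alpha rs)\,d\HH^{d-1}(s)\to0$ as $\alpha\to\infty$, uniformly in $x\in\bar B_1(0)$ and $r\ge\rho$, so by integrability of the radial weight the part $r\ge\rho$ is $\le\varepsilon/4$ once $\alpha\ge\bar\alpha$ with $\bar\alpha$ independent of $x$. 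Adding the four $\varepsilon/4$ contributions yields $|\tilde\Psi(x)|\le\varepsilon$ for all $x\in\bar B_1(0)$ and $\alpha\ge\bar\alpha$, which is the claim.

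The main obstacle I anticipate is the (routine but slightly delicate) verification that the truncated kernels form a compact family in $L^1(\partial B_1(0);\HH^{d-1})$, so that Proposition~\ref{prop_md_int0} can be applied with uniformity in \emph{both} the base point $x\in\bar B_1(0)$ and the radial variable $r\ge\rho$; everything else is bookkeeping with the $\varepsilon$-budget, and — in contrast with Lemma~\ref{lm_NPsmall2} — there is no subtle analysis near $y=0$, which is exactly why only $p>(d-1)/2$ is needed here.
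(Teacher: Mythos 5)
Your argument is correct and essentially the paper's own proof: reduce to continuous $A$ (slightly increasing $k$), cut out the diagonal ball using $k<d$, replace $K$ by the truncated kernel, pass to polar coordinates where $p>(d-1)/2$ gives integrability of the radial weight $r^d(r^2+1)^{-(p+1)}$, and apply Proposition~\ref{prop_md_int0} to the spherical integrals uniformly in the base point. The only cosmetic difference is that the paper additionally truncates the radial integral at some $r_0$ and $r_1$ and invokes Proposition~\ref{prop_md_int0} only on the compact range $[r_0,r_1]$, where precompactness of the kernel family is immediate from uniform continuity of $K_R$ on compacta, whereas you treat all $r\ge\rho$ at once and justify $L^1$-precompactness via the uniform decay of $K_R(x,rs)$ as $r\to\infty$ --- a step that implicitly uses $k>0$, which is harmless since $k=d-1$ in every application of the lemma.
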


\begin{proof}
As in the proof of Lemma~\ref{lm_NPsmall2} without loss of generality (up to a small increase of $k$) we assume
$A$ to be continuous over the whole $\R^d\times \R^d$.
Consider an arbitrary $x\in \bar B_1(0)\subset \R^d$ and an arbitrary $\varepsilon>0$.
%Let $x\in B_1^c(0)\subset \R^d$, choose an arbitrary $\varepsilon>0$ and
Denote for brevity
\[
C(p):= \sup_{y\in \R^d} \frac{|y|}{(|y|^2+1)^{p+1}}
\]
(note that $C(p)<+\infty$ because $p>-1/2$).
%We retain the notation on the constant $C(p)$
%used in the proof of Lemma~\ref{lm_NPsmall2}.
Finding a $\rho\in [0,1]$ such that
\begin{equation}\label{eq_2Wacut1}
\int_{B_\rho(x)}\frac{\, dy}{|x-y|^k}  < \delta,
\end{equation}
for some $\delta>0$ to be chosen later,
we get
\begin{equation}\label{eq_2WA2}
\begin{aligned}
|\tilde \Psi(x)| &
\leq \left( \|A\|_\infty \omega_d\rho^d \delta C(p) \|V\|_\infty + |I(\alpha,\rho)|\right),\quad
\mbox{where }\\
& I(\alpha, \rho):=\int_{B_\rho^c(x)} K(x,y)
\frac{y\cdot V(\alpha y)}{(|y|^2+1)^{p+1}}\, dy.
\end{aligned}
\end{equation}
Denoting
\begin{equation}\label{eq_defphi1}
K_\rho(x,y):= \left\{
\begin{array}{rl}
K(x,y), & y\in B_\rho^c(x),\\
A(x,y)/\rho^k, & y\in B_\rho(x),
\end{array}
\right.
\end{equation}
we get that $K_\rho$ is uniformly continuous and
bounded in $\R^d\times\R^d$.
We have now
\begin{equation}\label{eq_2WA2a1}
\begin{aligned}
|I(\alpha,\rho)| &\leq \left|\int_{\R^d} K_\rho(x,y)
\frac{y\cdot V(\alpha y)}{(|y|^2+1)^{p+1}}\, dy\right| +
 \left|\int_{B_\rho(x)} \frac{A(x,y)}{\rho^k}
\frac{y\cdot V(\alpha y)}{(|y|^2+1)^{p+1}}\, dy\right|\\
&\leq  \left|\int_{\R^d} K_\rho(x,y)
\frac{y\cdot V(\alpha y)}{(|y|^2+1)^{p+1}}\, dy\right| +\|A\|_\infty C(p)\|V\|_\infty
\omega_d\rho^{d-k}.
\end{aligned}
\end{equation}
Recall that
\begin{align*}
\int_{\R^d} K_\rho(x,y) &
\frac{y\cdot V(\alpha y)}{(|y|^2+1)^{p+1}}\, dy \\
& =
\int_0^{+\infty} \frac{r^d}{(r^2+1)^{p+1}} \, dr
\int_{\partial B_1(0)} K_\rho(x,rs)s\cdot V(\alpha r s)\,d\HH^{d-1}(s).
\end{align*}
We choose $\delta>0$ and $\rho>0$ to be sufficiently small (depending on $\varepsilon$) so as to have
\begin{align*}
\|A\|_\infty \omega_d \rho^d \delta C(p) \|V\|_\infty < \varepsilon/5,\quad
\|A\|_\infty C(p)\|V\|_\infty
\omega_d\rho^{d-k}< \varepsilon /5,
\end{align*}
so that in view of~\eqref{eq_2WA2a1} the estimate~\eqref{eq_2WA2} becomes
\begin{equation}\label{eq_2WA3}
 |\tilde \Psi(x)|\leq 2\varepsilon/5 + \left|\int_{\R^d} K_\rho(x,y)
\frac{y\cdot V(\alpha y)}{(|y|^2+1)^{p+1}}\, dy\right|.
\end{equation}

We choose now $r_0>0$ and $r_1>r_0$ (depending on $\varepsilon$ and $\rho$)
such that
\begin{align*}
\int_0^{r_0} \frac{r^d}{(r^2+1)^{p+1}} \, dr &
\int_{\partial B_1(0)} \left|K_\rho(x,rs)s\cdot V(\alpha r s)\right|\,d\HH^{d-1}(s)
\\
&\leq
\|K_\rho\|_\infty d \omega_d\|V\|_\infty \int_0^{r_0} \frac{r^d}{(r^2+1)^{p+1}} \, dr
\leq \varepsilon/5, \\
\int_{r_1}^{+\infty} \frac{r^d}{(r^2+1)^{p+1}} \, dr &
\int_{\partial B_1(0)} \left|\varphi_\rho(x,rs)s\cdot V(\alpha r s)\right|\,d\HH^{d-1}(s)
\\
&\leq \|K_\rho\|_\infty d \omega_d\|V\|_\infty \int_{r_1}^{+\infty}
 \frac{r^d}{(r^2+1)^{p+1}} \, dr \leq\varepsilon/5,
\end{align*}
(recall that we assumed $p> (d-1)/2$).
Since by Proposition~\ref{prop_md_int0} one has
\begin{align*}
\int_{\partial B_1(0)} K_\rho(x,rs)  s\cdot V(\alpha r s)\,d\HH^{d-1}(s) \to 0
\end{align*}
uniformly in $x\in B_1(0)$ as $\alpha\to +\infty$ for all $r\in [r_0, r_1]$,
we get
\[
\left|\int_{\R^d} K_\rho(x,y)
\frac{y\cdot V(\alpha y)}{(|y|^2+1)^{p+1}}\, dy\right| \leq 3\varepsilon/5,
\]
for all %sufficiently large
$\alpha\geq \bar\alpha$ (with some $\bar\alpha>0$ independent of $x\in B_1(0)$)
and hence, by~\eqref{eq_2WA3},
$|\tilde \Psi(x)| \leq\varepsilon$ for such $\alpha$ as claimed.
\end{proof}

\section{Estimates on the corrector}\label{sec_estcorr1}

\subsection{Uniform estimates}

We show that with the appropriate choice of the parameters the vector field
$W$ defined by the formula~\eqref{eq_defW3} can be made arbitrarily small
in supremum norm, namely, that the following lemma is valid.

\begin{lemma}\label{lm_WCsmall}
Let $p\in ((d-1)/2, d/2)$. Suppose that $V\in \Lip_{loc}(\R^d;\R^d)$ is a bounded incompressible
vector field with vanishing mean drift.
Then, given an $\varepsilon>0$, there is an $\bar\alpha=\bar\alpha(p,\varepsilon)$ such that
\[
\|W\|_\infty \leq \varepsilon\quad \mbox{
for every $\alpha> \bar\alpha$}.
\]
\end{lemma}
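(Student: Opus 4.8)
The plan is to reduce the estimate, by two rescalings of the integral in~\eqref{eq_defW3}, to the two technical vanishing statements already proved in Lemmas~\ref{lm_NPsmall1} and~\ref{lm_NPsmall2}. I split $\R^d$ into the two regions $\{|x|\le\alpha\}$ and $\{|x|>\alpha\}$, estimate $|W(x)|$ on each uniformly, and at the end take $\bar\alpha$ to be the larger of the two thresholds produced.

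On the region $|x|\le\alpha$ I substitute $y=\alpha z$ in~\eqref{eq_defW3} and set $\xi:=x/\alpha\in\bar B_1(0)$. Using $|y|^2+\alpha^2=\alpha^2(|z|^2+1)$, $x-y=\alpha(\xi-z)$ and $(|x|^2+\alpha^2)^p=\alpha^{2p}(|\xi|^2+1)^p$, a bookkeeping of the powers of $\alpha$ (from the prefactor, the Jacobian $\alpha^d$, the factor $|x-y|^{-d}$, the factor $y$ and the denominator) shows that they all cancel, leaving
\[
W(x)=2p\,c_d\,(|\xi|^2+1)^p\int_{\R^d}\frac{\xi-z}{|\xi-z|^d}\,\frac{z\cdot V(\alpha z)}{(|z|^2+1)^{p+1}}\,dz .
\]
Here $(|\xi|^2+1)^p\le 2^p$, and the vector-valued kernel $K(\xi,z):=(\xi-z)/|\xi-z|^d$ has components of the form $A_i(\xi,z)/|\xi-z|^{d-1}$ with $A_i(\xi,z)=(\xi_i-z_i)/|\xi-z|$ bounded and continuous off the diagonal, and $d-1<d$, so each component fits the hypotheses of Lemma~\ref{lm_NPsmall1}. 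Applying that lemma componentwise (it uses $p>(d-1)/2$) gives that the integral tends to $0$ as $\alpha\to+\infty$ uniformly in $\xi\in\bar B_1(0)$, hence $|W(x)|\le\varepsilon$ for all $|x|\le\alpha$ once $\alpha$ is large enough.

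On the region $|x|>\alpha$ I substitute instead $y=|x|z=\alpha|\tilde x|z$, where $\tilde x:=x/\alpha$ satisfies $|\tilde x|>1$ and $n(\tilde x)=n(x)$. Then $x-y=|x|(n(\tilde x)-z)$, $|y|^2+\alpha^2=|x|^2(|z|^2+|\tilde x|^{-2})$, the argument of $V$ becomes $|x|z=\alpha|\tilde x|z$, and again all powers of $|x|$ cancel, yielding
\[
W(x)=2p\,c_d\,(1+|\tilde x|^{-2})^p\int_{\R^d}\frac{n(\tilde x)-z}{|n(\tilde x)-z|^d}\,\frac{z\cdot V(\alpha|\tilde x|z)}{(|z|^2+|\tilde x|^{-2})^{p+1}}\,dz ,
\]
which is exactly $2p\,c_d\,(1+|\tilde x|^{-2})^p\,\Psi(\tilde x)$ with $\Psi$ as in Lemma~\ref{lm_NPsmall2}. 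Since $|\tilde x|>1$ gives $(1+|\tilde x|^{-2})^p\le 2^p$ and the kernel again fits the hypotheses of that lemma (where the assumption $p<d/2$ is what makes the near-origin contribution in its proof integrable), Lemma~\ref{lm_NPsmall2} gives $|W(x)|\le\varepsilon$ for all $|x|>\alpha$ once $\alpha$ is large. Taking $\bar\alpha$ to be the maximum of the two thresholds completes the proof.

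The only points that require care, though neither is a genuine difficulty, are: first, the exponent bookkeeping in the two substitutions, which must be done to confirm that the factor $(|x|^2+\alpha^2)^p$ together with the Jacobian and the $|x-y|^{-d}$ term collapses to a bounded constant times a bounded function of $\xi$ (resp.\ $\tilde x$); second, verifying that $K(a,z)=(a-z)/|a-z|^d$ meets the regularity requirements of Lemmas~\ref{lm_NPsmall1} and~\ref{lm_NPsmall2} — it is $C^\infty$ (hence locally Lipschitz) off the diagonal, its components have the form $A_i/|a-z|^{d-1}$ with $A_i$ bounded and continuous off the diagonal, and on $\{|a-z|\ge R\}$ it has gradient $O(R^{-d})$ and is therefore uniformly continuous there. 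This is the step I would expect to be the most delicate to state cleanly, but it presents no real obstacle.
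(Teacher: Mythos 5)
Your proposal is correct and follows essentially the same route as the paper: the paper likewise rescales (writing $W_\alpha(x)=W(\alpha x)$, i.e.\ your $\xi=x/\alpha$), splits into $|x|\le\alpha$ and $|x|>\alpha$, and applies Lemma~\ref{lm_NPsmall1} on the inner region and Lemma~\ref{lm_NPsmall2} on the outer region with the kernel $A(x,y)=(x_i-y_i)/|x-y|$, $k=d-1$. Your exponent bookkeeping and kernel verification match the paper's computations \eqref{eq_WA1} and \eqref{eq_WA4}, so no changes are needed.
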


\begin{proof}
We calculate
\begin{equation}\label{eq_WA1}
\begin{aligned}
W_\alpha(x):= W(\alpha x)& =2p c_d\alpha^{2p} (|x|^2+1)^p
\int_{\R^d}\frac{\alpha x-y}{|\alpha x-y|^d}
\frac{y\cdot V(y)}{(|y|^2+\alpha^2)^{p+1}}\, dy\\
&
=2p c_d(|x|^2+1)^p \int_{\R^d}\frac{x-y}{|x-y|^d}
\frac{y\cdot V(\alpha y)}{(|y|^2+1)^{p+1}}\, dy,
\end{aligned}
\end{equation}
and note that $\|W\|_\infty=\|W_\alpha\|_\infty$, so that we may
estimate the latter.

{\sc Case 1}: $|x|\leq 1$. The desired estimate
follows immediately from Lemma~\ref{lm_NPsmall1}
applied with
\[
K(x,y):=\frac{x_i-y_i}{|x-y|^d}
\]
(i.e.\ $A(x,y):=(x_i-y_i)/|x-y)|$, $k=d-1$) for every $i=1,\ldots, d$.

{\sc Case 2}: $|x|> 1$.
In this case, denoting $n(x):= x/|x|$, we further calculate
\begin{equation}\label{eq_WA4}
\begin{aligned}
W_\alpha(x)& = 2p c_d(1+ |x|^{-2})^p |x|^{2p}\int_{\R^d}\frac{n(x)|x|-y}{|n(x)|x|-y|^d}
\frac{y\cdot V(\alpha y)}{(|y|^2+1)^{p+1}}\, dy\\
&= 2p c_d(1+ |x|^{-2})^p \int_{\R^d}\frac{n(x)-y}{|n(x)-y|^d}
\frac{y\cdot V(\alpha |x| y)}{(|y|^2+|x|^{-2})^{p+1}}\, dy,
%\\
%&=  2p c_d(1+ |x|^{-2})^p \int_{\R^d}\left(\frac{n(x)-y}{|n(x)-y|^d} - n(x)\right)
%\frac{y\cdot V(\alpha |x| y)}{(|y|^2+|x|^{-2})^{p+1}}\, dy,
\end{aligned}
\end{equation}
%the latter equality being due to Lemma~\ref{lm_WC1div0}.
so that the necessary estimate follows immediately from Lemma~\ref{lm_NPsmall2}
applied with the same data as in Case~1.
\end{proof}

\begin{corollary}\label{co_Wdivsmall}
Under conditions of Lemma~\ref{lm_WCsmall} one has that
for every $\varepsilon>0$, there is an $\bar\alpha=\bar\alpha(p,\varepsilon)$ such that
\[
\|\mathrm{div}\, W \|_\infty \leq \varepsilon\quad \mbox{
for every $\alpha> \bar\alpha$}.
\]
\end{corollary}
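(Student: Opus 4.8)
The plan is to avoid differentiating the singular integral~\eqref{eq_defW3} directly and instead extract a closed-form expression for $\mathrm{div}\, W$ from the very property for which $W$ was designed. By construction (see~\eqref{eq_defWinv1a} and Lemma~\ref{lm_Wmuinv1}) the measure $\mu=\psi\,dx$ is invariant under the flow of~\eqref{eq_ODEpert1}, which is precisely the identity $\mathrm{div}\,(\psi(V+W))=0$ in the weak sense. Since $V\in\Lip_{loc}(\R^d;\R^d)$ and $W\in C^{1,\beta}_{loc}(\R^d;\R^d)$ and $\psi$ is smooth and positive, the vector field $\psi(V+W)$ is locally Lipschitz, so this distributional identity holds a.e.\ in the pointwise sense; expanding by the Leibniz rule and using incompressibility of $V$ (so that $\mathrm{div}\, V=0$ a.e.) one obtains
\[
\mathrm{div}\, W=-\frac{\nabla\psi}{\psi}\cdot(V+W)
\]
a.e., and in fact everywhere, both sides being continuous.

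The next step is the elementary computation with $\psi(x)=(|x|^2+\alpha^2)^{-p}$: here $\nabla\psi(x)=-2p\,x\,(|x|^2+\alpha^2)^{-p-1}$, hence $\nabla\psi/\psi=-2p\,x/(|x|^2+\alpha^2)$, which gives the pointwise formula
\[
\mathrm{div}\, W(x)=\frac{2p\,x\cdot\bigl(V(x)+W(x)\bigr)}{|x|^2+\alpha^2}.
\]
Now I would estimate crudely, $\bigl|x\cdot(V(x)+W(x))\bigr|\le|x|\,(\|V\|_\infty+\|W\|_\infty)$, while the inequality $|x|^2+\alpha^2\ge 2\alpha|x|$ yields $|x|/(|x|^2+\alpha^2)\le 1/(2\alpha)$ for $x\neq 0$ (and the quantity is $0$ at $x=0$). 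Combining,
\[
\|\mathrm{div}\, W\|_\infty\le\frac{p}{\alpha}\bigl(\|V\|_\infty+\|W\|_\infty\bigr).
\]

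To finish, I would apply Lemma~\ref{lm_WCsmall} once, with $\varepsilon=1$, to fix an $\bar\alpha_0=\bar\alpha_0(p)$ for which $\|W\|_\infty\le 1$ whenever $\alpha>\bar\alpha_0$; then for every $\alpha>\bar\alpha(p,\varepsilon):=\max\bigl\{\bar\alpha_0,\ p(\|V\|_\infty+1)/\varepsilon\bigr\}$ the last display gives $\|\mathrm{div}\, W\|_\infty\le\varepsilon$, which is the claim. I do not expect any real difficulty: the only point requiring a word of justification is the passage from the weak identity $\mathrm{div}\,(\psi(V+W))=0$ to the pointwise formula for $\mathrm{div}\, W$, and this is immediate from the regularity already recorded for $V$ and $W$. (A fully self-contained alternative would differentiate~\eqref{eq_defW3} using the local $C^{1,\beta}$ estimates established earlier, but that route is strictly more laborious and yields nothing extra.)
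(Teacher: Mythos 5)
Your proposal is correct and follows essentially the same route as the paper: extract the identity $\mathrm{div}\,W=-\nabla\psi/\psi\cdot(V+W)$ from the defining divergence relation, compute $\nabla\psi/\psi=-2p\,x/(|x|^2+\alpha^2)$, and use $|x|/(|x|^2+\alpha^2)\le 1/(2\alpha)$ together with Lemma~\ref{lm_WCsmall} to bound $\|W\|_\infty$. Your explicit invocation of Lemma~\ref{lm_WCsmall} with $\varepsilon=1$ to control $\|W\|_\infty$ only makes precise a step the paper leaves implicit.
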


\begin{proof}
From~\eqref{eq_defW1} one has
\[
\psi \mbox{div}\, W = \mbox{div}\, (\psi W)- \nabla \psi\cdot W= -\nabla \psi\cdot V- \nabla \psi\cdot W,
\]
so that
\begin{align*}
\mbox{div}\, W = -\nabla \log \psi\cdot (V+ W) = 2p \frac{|x|}{|x|^2+\alpha^2}\frac{x}{|x|}\cdot (V+ W).
\end{align*}
This gives
\begin{align*}
|\mbox{div}\, W| \leq  \frac{2p}{\alpha} \frac{|x/\alpha|}{|x/\alpha|^2+1}(\|V\|_\infty+ \|W\|_\infty)\leq
\frac{p}{\alpha} (\|V\|_\infty+ \|W\|_\infty),
\end{align*}
since $|t|/(t^2+1)\leq 1/2$, concluding the proof.
\end{proof}

\subsection{Estimates on the derivatives}

We show now that if $V$ is sufficiently smooth, then the derivatives
of $W$ can also be made as small as desired with the appropriate choice of parameters.

\begin{lemma}\label{lm_WC1small}
Let $p\in ((d-1)/2,d/2)$.
%$p> (d-1)/2$.
Assume that $V\in
\Lip(\R^d;\R^d)$ is incompressible vector field with vanishing mean drift, and
all $V_{x_j}$, $j=1, \ldots, d$, are locally Lipschitz still having vanishing mean drift.
Given an $\varepsilon>0$, there is an $\bar\alpha=\bar\alpha(p,\varepsilon)$ such that
\[
\|W_{x_j}\|_\infty \leq \varepsilon\quad \mbox{for every $\alpha> \bar\alpha$}
\]
for every $j=1,\ldots, d$.
\end{lemma}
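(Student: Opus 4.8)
The strategy is to reduce the estimate on $W_{x_j}$ to the same kind of integral operator as was already controlled in Lemmas~\ref{lm_NPsmall1} and~\ref{lm_NPsmall2}, applied now to the first derivatives $V_{x_j}$ in place of $V$. As in the proof of Lemma~\ref{lm_WCsmall}, I would start from the rescaled corrector $W_\alpha(x):=W(\alpha x)$, which satisfies
\[
W_\alpha(x)=2pc_d(|x|^2+1)^p\int_{\R^d}\frac{x-y}{|x-y|^d}\,\frac{y\cdot V(\alpha y)}{(|y|^2+1)^{p+1}}\,dy,
\]
and differentiate in $x_j$. Since $\|W_{x_j}\|_\infty=\alpha^{-1}\|(W_\alpha)_{x_j}\|_\infty$, it suffices to show $(W_\alpha)_{x_j}$ stays bounded uniformly in $\alpha$ (in fact it will tend to $0$, but boundedness with the $\alpha^{-1}$ prefactor already gives the claim; I will aim for the vanishing statement to keep the structure parallel). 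Differentiating, the $x_j$-derivative hits either the prefactor $(|x|^2+1)^p$ — producing a term of the same form as $W_\alpha$ itself, already handled by Lemmas~\ref{lm_NPsmall1}--\ref{lm_NPsmall2} — or the singular kernel $(x-y)/|x-y|^d$. For the latter, $\partial_{x_j}\big((x_i-y_i)/|x-y|^d\big)$ is a kernel of the form $A(x,y)/|x-y|^d$ with $A\in L^\infty$ continuous off the diagonal and homogeneity degree $k=d$ in $x-y$ — which is exactly the borderline case \emph{not} covered by Lemmas~\ref{lm_NPsmall1}--\ref{lm_NPsmall2}, since those require $k<d$.

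To get around the $k=d$ singularity, the plan is to integrate by parts so as to move the $x_j$-derivative off the kernel and onto the vector field. Using $\partial_{x_j}\Phi(x-y)=-\partial_{y_j}\Phi(x-y)$ for the Newtonian kernel (equivalently $\partial_{x_j}\big((x_i-y_i)/|x-y|^d\big)=-\partial_{y_j}\big((x_i-y_i)/|x-y|^d\big)$ away from the diagonal), one transfers the derivative to the factor $y\cdot V(\alpha y)/(|y|^2+1)^{p+1}$. The boundary terms vanish (the integrand decays because $p>(d-1)/2$, and near the diagonal the principal value / the $k=d$ integrability is handled exactly as the $\delta$-cutoff argument in the proofs of Lemmas~\ref{lm_NPsmall1}--\ref{lm_NPsmall2}). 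After the integration by parts one is left with kernels of homogeneity $k=d-1<d$ again, now paired against expressions involving $\partial_{y_j}\big(y\cdot V(\alpha y)\big)=V_j(\alpha y)+\alpha\, y\cdot V_{x_j}(\alpha y)$ and the smooth weight $(|y|^2+1)^{-(p+1)}$ together with its derivative. The term carrying $\alpha\, y\cdot V_{x_j}(\alpha y)$ absorbs the extra $\alpha$ against the overall $\alpha^{-1}$ in $\|W_{x_j}\|_\infty=\alpha^{-1}\|(W_\alpha)_{x_j}\|_\infty$, and since $V_{x_j}$ is by hypothesis locally Lipschitz, bounded (as $V\in\Lip$), incompressible — being a derivative of a divergence-free field — and has vanishing mean drift, Lemmas~\ref{lm_NPsmall1} and~\ref{lm_NPsmall2} apply verbatim to it and give the required smallness uniformly in $x$. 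The remaining lower-order term $V_j(\alpha y)\cdot(\text{smooth weight})$ has no extra $\alpha$ and, after the $\alpha^{-1}$ prefactor, is $O(\alpha^{-1}\|V\|_\infty)$, hence also small.

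I would organize the write-up as: (1) pass to $W_\alpha$ and split $(W_\alpha)_{x_j}$ into the prefactor-derivative term and the kernel-derivative term; (2) dispose of the prefactor term by Lemmas~\ref{lm_NPsmall1}--\ref{lm_NPsmall2} exactly as in Lemma~\ref{lm_WCsmall} (splitting $|x|\le 1$ and $|x|>1$, using $n(x)$ and the rescaling $|x|^{-2}$ inside the weight in the far regime); (3) justify the integration by parts in $y$ for the kernel term, with a cutoff $B_\rho(x)$ around the diagonal whose contribution is $O(\rho^{d-k})$ with the post-integration exponent $k=d-1$, and vanishing boundary contribution at infinity from $p>(d-1)/2$; (4) recognize the resulting integrals, for both $|x|\le1$ and $|x|>1$, as instances of $\tilde\Psi$ and $\Psi$ from Lemmas~\ref{lm_NPsmall1} and~\ref{lm_NPsmall2} with $V$ replaced by $V_{x_j}$ and with the extra $\alpha$ cancelled by the $\alpha^{-1}$ out front. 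The main obstacle is step (3): making the integration by parts rigorous at the $k=d$ borderline, i.e. checking that the principal-value structure of $\partial_{x_j}\nabla\Phi$ is compatible with the transfer of the derivative onto the (merely locally Lipschitz) data $V_{x_j}$, and that no extra distributional (Dirac) term survives — here one should either work with the mollified kernel off a shrinking neighborhood of the diagonal, as is implicit in the cutoff constructions $K_R$, $K_\rho$ of Appendix~\ref{sec_vmd0}, or invoke the elliptic regularity already used in Lemma~\ref{lm_Wmuinv1} to justify differentiation under the integral in a Sobolev sense before identifying the limit.
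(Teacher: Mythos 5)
Your overall strategy is the same as the paper's: get the $x_j$-derivative off the singular kernel and onto the data (the paper does this by the change of variables $z=x-y$ before differentiating, you by an integration by parts, which is the same move), then recognize the piece carrying $\alpha\,y\cdot V_{x_j}(\alpha y)$ as the corrector built from $V_{x_j}$ and apply Lemma~\ref{lm_WCsmall} (i.e.\ Lemmas~\ref{lm_NPsmall1}--\ref{lm_NPsmall2}) to it, the prefactor-derivative piece being controlled by Lemma~\ref{lm_WCsmall} as well. These parts, including your concern about a possible Dirac contribution from $\partial_{ij}\Phi$ (which disappears once the derivative is transferred to the data, exactly as in the paper's decomposition \eqref{eq_Wder1}--\eqref{eq_Z3}), are sound.

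The genuine gap is your dismissal of the remaining piece, i.e.\ the analogue of $Z_3$ in \eqref{eq_Z3}, namely
\[
(|x|^2+1)^p\int_{\R^d}\frac{x-y}{|x-y|^d}\,V(\alpha y)\cdot\frac{\partial}{\partial y_j}\frac{y}{(|y|^2+1)^{p+1}}\,dy ,
\]
which you claim is bounded uniformly in $x$ and $\alpha$, hence $O(\alpha^{-1}\|V\|_\infty)$ after the $\alpha^{-1}$ prefactor. Uniform boundedness in $x$ does not follow from size estimates alone: the weight derivative decays like $(1+|y|)^{-2p-2}$, so the integral is only $O(|x|^{1-d})$ for large $|x|$, and after multiplication by $(|x|^2+1)^p$ the crude bound grows like $|x|^{2p+1-d}$, a \emph{positive} power precisely because $p>(d-1)/2$. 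Your argument works in the regime $|x|\lesssim 1$ (this is the paper's Case $|x|\le 1$ for $Z_3$, where indeed only boundedness plus the $1/\alpha$ factor is used), but in the far-field regime the paper has to work much harder: after rescaling as in \eqref{eq_barZ3a3} it splits $Z_{3,\alpha}=\Lambda_1+\Lambda_2$ using the kernel difference $\frac{n(x)-y}{|n(x)-y|^d}-\frac{n(x)-y}{|n(x)-y|^{d-1}}$ (which vanishes linearly at $y=0$ to tame the degenerating weight $(|y|^2+|x|^{-2})^{-(p+1)}$), and for $\Lambda_2$ it integrates by parts \emph{back}, which regenerates an order-one term in $\alpha$ involving $V_{x_j}$ that is small only via Lemma~\ref{lm_NPsmall2} applied to $V_{x_j}$, i.e.\ via the vanishing mean drift of the derivatives, not via any $1/\alpha$ gain. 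So the ``remaining lower-order term'' is not lower order uniformly in $x$; its far-field control is the heart of the lemma and is missing from your plan.
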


\begin{proof}
Fix an arbitrary $j\in \{1,\ldots, d\}$. Rewriting~\eqref{eq_defW3} in the form
\begin{align*}
W(x) & =
 2p(-1)^d c_d (|x|^2+\alpha^2)^p\int_{\R^d} \frac{z}{|z|^d} \frac{ (x-z)\cdot V(x-z)}{(|x-z|^2+\alpha^2)^{p+1}}
 \, dz,
\end{align*}
%and denoting for brevity $C_{p,d}:= 2p c_d$,
we get the following relationships
\begin{eqnarray}
\label{eq_Wder1}
W_{x_j}(x) = Z_1(x) +Z_2(x)+ Z_3(x), \quad\mbox{where}\\
\label{eq_Z1}
\begin{array}{rl}
Z_1(x) & = 4p^2 (-1)^dc_d x_j (|x|^2+\alpha^2)^{p-1}
\displaystyle\int_{\R^d}
\frac{z}{|z|^d} \frac{(x-z)\cdot V(x-z)}{(|x-z|^2+\alpha^2)^p}\, dz \\
&
= 4p^2 c_d x_j (|x|^2+\alpha^2)^{p-1}
\displaystyle\int_{\R^d} \frac{x-y}{|x-y|^d} \frac{y\cdot V(y)}{(|y|^2+\alpha^2)^p}\, dy \\
&=
\displaystyle\frac{2p x_j}{|x|^2+\alpha^2} W(x),
\end{array}\\
\label{eq_Z2}
\begin{array}{rl}
Z_2(x) & = 2p(-1)^d c_d
 (|x|^2+\alpha^2)^{p}
\displaystyle\int_{\R^d} \dfrac{z}{|z|^d} \frac{(x-z)\cdot V_{x_j}(x-z)}{(|x-z|^2+\alpha^2)^{p+1}}  \, dz\\
&=2p c_d(|x|^2+\alpha^2)^{p}
\displaystyle\int\limits_{\R^d} \frac{x-y}{|x-y|^d} \frac{ y\cdot V_{y_j}(y)}{(|y|^2+\alpha^2)^{p+1}}\, dy,
\end{array}\\
\label{eq_Z3}
\begin{array}{rl}
Z_3(x) & = 2p(-1)^dc_d(|x|^2+\alpha^2)^{p}
\displaystyle\int\limits_{\R^d} \dfrac{z}{|z|^d} V(x-z)\cdot\frac{\partial}{\partial x_i}
\dfrac{x-z}{(|x-z|^2+\alpha^2)^{p+1}}\, dz\\
&
= 2p c_d(|x|^2+\alpha^2)^{p}
\displaystyle\int\limits_{\R^d} \dfrac{x-y}{|x-y|^d} V(y) \cdot \dfrac{\partial}{\partial y_i}
\dfrac{y}{(|y|^2+\alpha^2)^{p+1}}\, dy.
\end{array}
\end{eqnarray}
We now estimate separately the three terms $Z_1$, $Z_2$ and $Z_3$.

{\sc Estimate of $Z_1$}. Since
the function $x\mapsto x_j/(|x|^2+\alpha^2)$ is uniformly bounded over $\R^d$,
then from~\eqref{eq_Z1}
by Lemma~\ref{lm_WCsmall} we get that $\|Z_1\|_\infty\leq \varepsilon/3$ once
$\alpha$ is sufficiently large (depending on $p$ and $\varepsilon$).

{\sc Estimate of $Z_2$}. From~\eqref{eq_Z2} we see that the expression for
$Z_2$ %, modulo a different sign,
is exactly the same as the definition~\eqref{eq_defW3} of $W$, but with
$V_{x_j}$ instead of $V$. Since $V_{x_j}$ is still bounded, incompressible
and has vanishing mean drift, %(the latter by Lemma~\ref{lm_meandrift_der1}),
then again
by Lemma~\ref{lm_WCsmall} we get that $\|Z_2\|_\infty\leq \varepsilon/3$ once
$\alpha$ is sufficiently large (depending on $p$ and $\varepsilon$).

{\sc Estimate of $Z_3$}.
We let
\begin{equation}\label{eq_barZ3}
Z_{3,\alpha}(x):= Z_3(\alpha x)=\frac{2pc_d}{\alpha} (|x|^2+1)^p \int_{\R^d}
\frac{x-y}{|x-y|^d} V(\alpha y) \cdot \frac{\partial}{\partial y_j}
\frac{y}{(|y|^2+1)^{p+1}}\, dy,
\end{equation}
and note that $\|Z_3\|_\infty=\|Z_{3,\alpha}\|_\infty$, so that we may estimate the latter. As in the proof of Lemma~\ref{lm_WCsmall} we separate this estimate in two
cases.

\emph{Case $|x|\leq 1$}. In this case we repeat line by line the %Case~1 of the
proof of Lemma~\ref{lm_NPsmall1}
with
\[
K(x,y):=\frac{x_i-y_i}{|x-y|^d}
\]
(i.e.\ $A(x,y):=(x_i-y_i)/|x-y)|$, $k=d-1$) for every $i=1,\ldots, d$, but with the vector field
$\frac{\partial}{\partial y_j}
\frac{y}{(|y|^2+1)^{p+1}}$ in place of just $y/(|y|^2+1)^{p+1}$, observing that
this vector field is still uniformly bounded by a constant
depending only on $p$ that we will call $C(p)$.
Then choosing an arbitrary $\delta>0$ and a  $\rho>0$ so as to
satisfy~\eqref{eq_2Wacut1},
instead of~\eqref{eq_2WA2} and~\eqref{eq_2WA2a1} we will get, with
the notation of $\varphi_\rho$ defined in~\eqref{eq_defphi1}, the estimate
\begin{equation}\label{eq_Z3a1}
\begin{aligned}
|Z_{3,\alpha}(x)| &
\leq \frac{2pc_d 2^p}{\alpha} \delta C(p) \|V\|_\infty +  \frac{2pc_d 2^p}{\alpha}C(p)\|V\|_\infty
\omega_d\rho\\
& \qquad+  \frac{2pc_d 2^p}{\alpha}
\left|\int_{\R^d} \varphi_\rho(x,y)
V(\alpha y) \cdot\frac{\partial}{\partial y_j}
\frac{y}{(|y|^2+1)^{p+1}}\, dy\right|.
\end{aligned}
\end{equation}
Writing
\begin{equation}\label{eq_Z3a2}
\begin{aligned}\int_{\R^d} \varphi_\rho &(x,y)
V(\alpha y) \cdot\frac{\partial}{\partial y_j}
\frac{y}{(|y|^2+1)^{p+1}}\, dy\\
& =
\int_0^{+\infty} \frac{r^{d-1}}{(r^2+1)^{p+1}} \, dr
\int_{\partial B_1(0)} \varphi_\rho(x,rs) V_j (\alpha rs)
\,d\HH^{d-1}(s)\\
&\quad - 2(p+1) \int_0^{+\infty} \frac{r^{d+1}}{(r^2+1)^{p+2}} \, dr
\int_{\partial B_1(0)} \varphi_\rho(x,rs) s_j
s\cdot V(\alpha r s)
\,d\HH^{d-1}(s),
\end{aligned}
\end{equation}
and estimating
\begin{align*}
\left|\int_0^{+\infty} \frac{r^{d-1}}{(r^2+1)^{p+1}} \, dr \right. &\left.
\int_{\partial B_1(0)} \varphi_\rho(x,rs) V_j (\alpha rs)
\,d\HH^{d-1}(s)\right| \\
&  \leq d\omega_d \frac{1}{\rho^{d-1}} \|V\|_\infty
 \int_0^{+\infty} \frac{r^{d-1}}{(r^2+1)^{p+1}} \, dr,\\
\left|\int_0^{+\infty} \frac{r^{d+1}}{(r^2+1)^{p+2}} \, dr\right. &\left.
 \int_{\partial B_1(0)} \varphi_\rho(x,rs) s_j
 s\cdot V(\alpha r s)
 \,d\HH^{d-1}(s)\right| \\
& \leq d\omega_d \frac{1}{\rho^{d-1}} \|V\|_\infty \int_0^{+\infty} \frac{r^{d+1}}{(r^2+1)^{p+2}} \, dr,
\end{align*}
the integrals on the right-hand sides of the above inequalities
being convergent (because $p> d/2-1$), we get that the integral on the left-hand side of~\eqref{eq_Z3a2} is estimated by a constant depending only on $d$, $p$, $\|V\|_\infty$ and $\rho$. Recalling that $\alpha$ is in the denominator
in the right-hand side of~\eqref{eq_Z3a1}, we get from~\eqref{eq_Z3a1} together
with~\eqref{eq_Z3a2} that $|Z_{3,\alpha}(x)|\leq \varepsilon/3$ for $\alpha$ sufficiently large (depending only on $p$ and $\varepsilon$).

\emph{Case $|x|> 1$}. We rewrite
 \begin{equation}\label{eq_barZ3a3}
Z_{3,\alpha}(x)=\frac{2pc_d}{\alpha|x|} (1+ |x|^{-2})^p \int_{\R^d}
\frac{n(x)-y}{|n(x)-y|^d} V(\alpha |x| y) \cdot \frac{\partial}{\partial y_j}
\frac{y}{(|y|^2+|x|^{-2})^{p+1}}\, dy,
\end{equation}
where $n(x):= x/|x|$.
Let now
\[
\begin{aligned}
\Lambda_1 (x)&:=
\frac{2pc_d}{\alpha|x|} (1+ |x|^{-2})^p \\
& \qquad\int_{\R^d}
\left( \frac{n(x)-y}{|n(x)-y|^d} - \frac{n(x)-y}{|n(x)-y|^{d-1}}\right)
 V(\alpha |x| y) \cdot \frac{\partial}{\partial y_j}
\frac{y}{(|y|^2+|x|^{-2})^{p+1}}\, dy\\
\Lambda_2 (x)&:=
\frac{2pc_d}{\alpha|x|} (1+ |x|^{-2})^p \int_{\R^d}
\frac{n(x)-y}{|n(x)-y|^{d-1}}
 V(\alpha |x| y) \cdot \frac{\partial}{\partial y_j}
\frac{y}{(|y|^2+|x|^{-2})^{p+1}}\, dy,
\end{aligned}
\]
so that $Z_{3,\alpha}(x)= \Lambda_1(x)+\Lambda_2(x)$.
We estimate the terms $\Lambda_1$ and $\Lambda_2$ separately.

To estimate $\Lambda_1$, consider the function $f\colon \partial B_1(0)\times \R^d\to \R^d$ defined by
\begin{align*}
f(x,y)&:=\frac{x-y}{|x-y|^d} - \frac{x-y}{|x-y|^{d-1}}.
\end{align*}
Since
$\left|f_y(x,y)\right|\leq C$
for some $C=C(\rho)>0$ (independent on $x$ and $y$) whenever $|y|\leq \rho<1$ and $|x|=1$, then
for the function
$F\colon B_1(0)\to \R^d$ defined as $F(y):= f(n(x),y)$
one clearly has $F(0)=0$ and
$|F(y)|\leq C |y|$
whenever $|y|\leq \rho$, $|x|>1$ for some $\rho\in (0,1)$.
Therefore,
one gets
\begin{equation}\label{eq_estL1a}
\begin{aligned}
|\Lambda_1(x)| &\leq \frac{2pc_d}{\alpha} 2^p C \|V\|_\infty
 \int_{B_\rho(0)} |y|\cdot \left| \frac{\partial}{\partial y_j}
\frac{y}{(|y|^2+|x|^{-2})^{p+1}}\right|\, dy
%\\
%& \quad
+ \frac{2pc_d}{\alpha}2^p \left| J_{\alpha}\right|,\\
&\leq \frac{2pc_d}{\alpha} 2^pC \|V\|_\infty C(p)\omega_d\rho^{d+1}+
\frac{2pc_d}{\alpha}2^p\left| J_{\alpha}\right|,
% &\quad  +\frac{2pc_d}{\alpha}\left|\int_{B_R^c(n(x))\cap B_\rho^c(0)}
% \eta_R(n(x),y)
%  V(\alpha |x| y) \cdot \frac{\partial}{\partial y_j}
% \frac{y}{(|y|^2+|x|^{-2})^{p+1}}\, dy\right|\\
% & \quad +\frac{2pc_d}{\alpha}\left| J_{\alpha}\right|,
\end{aligned}
\end{equation}
where
\begin{align*}
 J_{\alpha}& :=\int_{B_\rho^c(0)}
f(n(x),y)
 V(\alpha |x| y) \cdot \frac{\partial}{\partial y_j}
\frac{y}{(|y|^2+|x|^{-2})^{p+1}}\, dy.
%\\
%f(x,y)&:=\frac{x-y}{|x-y|^d} - \frac{x-y}{|x-y|^{d-1}}.
\end{align*}
To estimate $J_\alpha$, we write
\begin{equation*}\label{eq_Ja_0}
\begin{aligned}
 J_{\alpha} & =\int_{B_\rho^c(0)}
f(n(x),y)
 V(\alpha |x| y) \cdot \frac{\partial}{\partial y_j}
\frac{y}{(|y|^2+|x|^{-2})^{p+1}}\, dy\\
&=\int_{B_\rho^c(0)\cap B_R^c(n(x))}
\eta_R(n(x),y)
 V(\alpha |x| y) \cdot \frac{\partial}{\partial y_j}
\frac{y}{(|y|^2+|x|^{-2})^{p+1}}\, dy \\
&\quad +
\int_{B_R(n(x))}
f(n(x),y)
 V(\alpha |x| y) \cdot \frac{\partial}{\partial y_j}
\frac{y}{(|y|^2+|x|^{-2})^{p+1}}\, dy,
\end{aligned}
\end{equation*}
and calculating
\[
\frac{\partial}{\partial y_j}
\frac{y}{(|y|^2+|x|^{-2})^{p+1}}\, dy = \frac{e_j}{(|y|^2+|x|^{-2})^{p+1}} - 2(p+1)\frac{y_jy}{(|y|^2+|x|^{-2})^{p+2}},
\]
we obtain
\begin{equation}\label{eq_Ja}
\begin{aligned}
 J_{\alpha} & =
\int_\rho^{+\infty} \frac{r^{d-1}}{(r^2+|x|^{-2})^{p+1}} \, dr
\int_{\partial B_1(0)} \eta_R(n(x),rs) V_j (\alpha rs)
\,d\HH^{d-1}(s)\\
&\quad - 2(p+1) \int_\rho^{+\infty} \frac{r^{d+1}}{(r^2+|x|^{-2})^{p+2}} \, dr
\\
&\qquad\qquad
\int_{\partial B_1(0)} \eta_R(n(x),rs) s_j
s\cdot V(\alpha r s)
\,d\HH^{d-1}(s)\\
&\quad + \int_{B_R(n(x))}
f(n(x),y)
 V(\alpha |x| y) \cdot \frac{\partial}{\partial y_j}
\frac{y}{(|y|^2+|x|^{-2})^{p+1}}\, dy,
\end{aligned}
\end{equation}
where $R\in (0, 1-\rho)$ will be chosen later, and
 \begin{equation*}\label{eq_defphi2}
 \eta_R(x,y):= \left\{
 \begin{array}{rl}
  \frac{x-y}{|x-y|^d} - \frac{x-y}{|x-y|^{d-1}}, & y\in B_R^c(x),\\
 \frac{x-y}{R^d} - \frac{x-y}{R^{d-1}}
 , & y\in B_R(x).
 \end{array}
 \right.
 \end{equation*}
Since for $y\in B_R(n(x))$ one has
\begin{align*}
\left|\frac{\partial}{\partial y_j}
\frac{y_i}{(|y|^2+|x|^{-2})^{p+1}}\right| &=
\frac{|\delta_{ij} - 2(p+1)y_iy_j|}{(|y|^2+|x|^{-2})^{p+2}}\\
&
\leq
\frac{1 + 2(p+1)R^2}{|y|^{2(p+2)}} %\quad\mbox{because $|y|<R$}\\
%&
\leq  C_R:=\frac{1 + 2(p+1)R^2}{|1-R|^{2(p+2)}},
\end{align*}
choosing $R$ so that
\[
\int_{B_R(n(x))} |f(n(x),y)|\, dy <\delta
\]
for some $\delta>0$, we get
\begin{equation}\label{eq_Ja1}
\left|\int_{B_R(n(x))}
f(n(x),y)
 V(\alpha |x| y) \cdot \frac{\partial}{\partial y_j}
\frac{y}{(|y|^2+|x|^{-2})^{p+1}}\, dy\right| \leq \|V\| C_R \delta.
\end{equation}
But
\begin{eqnarray*}
\left|\int_\rho^{+\infty} \right. &\left.\displaystyle \frac{r^{d-1}}{(r^2+|x|^{-2})^{p+1}} \, dr
\int_{\partial B_1(0)} \eta_R(n(x),rs) V_j (\alpha rs)
\,d\HH^{d-1}(s)\right|\\
&\leq\displaystyle \|V\|_\infty \|\eta_R\|_\infty d\omega_d\int_\rho^{+\infty}  \frac{r^{d-1}}{r^{2(p+1)}}\,dr,\\
\left| \int_\rho^{+\infty}\right. &\left. \displaystyle\frac{r^{d+1}}{(r^2+|x|^{-2})^{p+2}} \, dr
\int_{\partial B_1(0)} \eta_R(n(x),rs) s_j
s\cdot V(\alpha r s)  \,d\HH^{d-1}(s)\right| \\
 &\leq \|V\|_\infty \|\eta_R\|_\infty d\omega_d\displaystyle\int_\rho^{+\infty}\frac{r^{d+1}}{r^{2(p+2)}} \, dr,
\end{eqnarray*}
the integrals on the right-hand side of the above inequalities being convergent
(because $p> d/2-1$), so that combining this with~\eqref{eq_Ja1}
we get from~\eqref{eq_Ja} that
$|J_{\alpha}|$ is bounded by a constant independent on $\alpha$ (hence
depending only on $p$, $d$, $\|V\|_\infty$). Therefore, from~\eqref{eq_estL1a}
we have that $|\Lambda_1(x)|\leq \varepsilon /6$ for
$\alpha$ sufficiently large (depending on $p$ and $\varepsilon$).

Finally, it remains to estimate $\Lambda_2$. To this
aim
%consider
%for every $i\in \{1, \ldots, d\}$ the component
 integrating by parts the expression for $\Lambda_2$, we get
 (with notation $V^T$ standing for $V$ transposed, i.e. seen as a row, while
 by default the vectors are seen as columns)
\[
\begin{aligned}
\Lambda_2 (x)&=
% \frac{2pc_d}{\alpha|x|} (1+ |x|^{-2})^p \int_{\R^d}
% \frac{n_i(x)-y}{|n(x)-y|^{d-1}}
%  V(\alpha |x| y) \cdot \frac{\partial}{\partial y_j}
% \frac{y}{(|y|^2+|x|^{-2})^{p+1}}\, dy\\
% &=
-\frac{2pc_d}{\alpha|x|} (1+ |x|^{-2})^p\int_{\R^d}
\frac{\partial}{\partial y_j}\left(\frac{n(x)-y}{|n(x)-y|^{d-1}}
 V^T(\alpha |x| y)\right)
\frac{y\,dy}{(|y|^2+|x|^{-2})^{p+1}}\\
&=
-\frac{2pc_d}{\alpha|x|} (1+ |x|^{-2})^p\int_{\R^d}
\frac{\partial}{\partial y_j}\left(\frac{n(x)-y}{|n(x)-y|^{d-1}}\right)
 V(\alpha |x| y) \cdot
\frac{y\,dy}{(|y|^2+|x|^{-2})^{p+1}}\\
&\quad -2pc_d(1+ |x|^{-2})^p\int_{\R^d}\frac{n(x)-y}{|n(x)-y|^{d-1}}
 V_{y_j}(\alpha |x| y) \cdot
\frac{y\,dy}{(|y|^2+|x|^{-2})^{p+1}}\\
&=
\frac{2pc_d}{\alpha|x|} (1+ |x|^{-2})^p\int_{\R^d}\frac{e_j}{|n(x)-y|^{d-1}}
 V(\alpha |x| y) \cdot
\frac{y\,dy}{(|y|^2+|x|^{-2})^{p+1}} \\
&\quad
-\frac{2p(d-1)c_d}{\alpha|x|} (1+ |x|^{-2})^p\\
&\qquad\quad\int_{\R^d}
\left(\frac{n(x)-y}{|n(x)-y|^{d+1}}(n_j(x)-y_j)\right)
 V(\alpha |x| y) \cdot
\frac{y\,dy}{(|y|^2+|x|^{-2})^{p+1}}\\
&\quad -2pc_d(1+ |x|^{-2})^p\int_{\R^d}\frac{n(x)-y}{|n(x)-y|^{d-1}}
 V_{y_j}(\alpha |x| y) \cdot
\frac{y\,dy}{(|y|^2+|x|^{-2})^{p+1}},
\end{aligned}
\]
% the latter expression being obtained by integration by parts. Thus
% \[
% \begin{aligned}
% \Lambda_2^i (x)&=
% -\frac{2pc_d}{\alpha|x|} (1+ |x|^{-2})^p\int_{\R^d}
% \frac{\partial}{\partial y_j}\left(\frac{n_i(x)-y}{|n(x)-y|^{d-1}}\right)
%  V(\alpha |x| y) \cdot
% \frac{y\,dy}{(|y|^2+|x|^{-2})^{p+1}}\\
% &\quad -\int_{\R^d}\frac{n_i(x)-y}{|n(x)-y|^{d-1}}
% \alpha |x| V_{y_j}(\alpha |x| y) \cdot
% \frac{y\,dy}{(|y|^2+|x|^{-2})^{p+1}} ,
% \end{aligned}
% \]
and the desired estimate for $\Lambda_2$ follows from Lemma~\ref{lm_NPsmall2}
applied to each of the three integrals in the right-hand side of the above equality
(for the second integral it has to be applied with $A(x,y):=(x_i-y_i)(x_j-y_j)/|x-y|^2$, $k:=d-1$, for all
$i=1,\ldots, d$).
\end{proof}

\section{Auxiliary lemmata}

In this section we collect some easy auxiliary results of ``folkloric'' nature.

\begin{lemma}\label{lm_invmeas_div1}
Suppose that $V\colon \R^d\to \R^d$ is a bounded continuous vector field providing the uniqueness of
the solution to the Cauchy problem for~\eqref{eq_odeaut1} with any initial datum, hence generating the flow
$T_t:=\varphi^t_V\colon \R^d\to \R^d$, $t\in \R$, of the latter equation.
If $\mu$ is a $\sigma$-finite Radon measure over $\R^d$
satisfying
$\div (V\mu)=0$ in the weak
(i.e.\ distributional) sense in $\R^d$, then it is invariant with respect to $T_t$.
%The Radon measure $\mu$ over $\R^d$ is invariant with respect to $T_t$, if and only if $\div (V\mu)=0$ in the weak
%(i.e.\ distributional) sense in $\R^d$.
\end{lemma}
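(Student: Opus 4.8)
The plan is to show that the push-forward measures $\mu_t:=(T_t)_\#\mu$ all coincide with $\mu$. Since it suffices to test against compactly supported functions, fix $\phi\in C_c^\infty(\R^d)$ and set $g(t):=\int_{\R^d}\phi\,d\mu_t=\int_{\R^d}\phi(T_t(x))\,d\mu(x)$. First I would record the a priori bounds that legitimize the manipulations: because $|T_t(x)-x|\le\|V\|_\infty|t|$, for $t$ in a bounded interval the integrand $\phi\circ T_t$ is supported in a fixed compact set $K$, and $\mu(K)<\infty$ since $\mu$ is Radon; likewise $\mu_t(B_R)\le\mu(B_{R+\|V\|_\infty|t|})$, so $\{\mu_t\}$ has uniformly locally bounded mass. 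With this, differentiation under the integral sign (dominated convergence) together with $\frac{d}{dt}\phi(T_t(x))=\nabla\phi(T_t(x))\cdot V(T_t(x))$ gives that $g\in C^1$ with
\[
g'(t)=\int_{\R^d}\bigl(\nabla\phi\cdot V\bigr)(T_t(x))\,d\mu(x)=\int_{\R^d}\nabla\phi\cdot V\,d\mu_t .
\]
In other words, $t\mapsto\mu_t$ is a weakly continuous, locally bounded solution of the continuity equation $\partial_t\mu_t+\div(V\mu_t)=0$ with $\mu_0=\mu$; note this holds for \emph{any} Radon $\mu$ and uses nothing but the chain rule.

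The whole content of the lemma is therefore the identity $\int\nabla\phi\cdot V\,d\mu_t=0$, i.e.\ $\div(V\mu_t)=0$, which by hypothesis is known only for $t=0$. Here is where the assumption enters: the constant curve $t\mapsto\mu$ is itself a solution of the same continuity equation, precisely because $\div(V\mu)=0$. I would then invoke the superposition principle (Ambrosio): a solution of the continuity equation with driving field $V$ is the image, under the evaluation maps $e_t$, of a measure $\boldsymbol\eta$ on the path space $C([0,T];\R^d)$ concentrated on integral curves of $V$. Applied to the constant solution $\mu$, this yields an $\boldsymbol\eta$ with $(e_t)_\#\boldsymbol\eta=\mu$ for every $t$; since by assumption the Cauchy problem for $\dot x=V(x)$ has a unique solution, every curve in the support of $\boldsymbol\eta$ is of the form $t\mapsto T_t(x)$ and is determined by its initial value, so $\boldsymbol\eta=G_\#\mu$ with $G(x):=T_\bullet(x)$. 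Hence $\mu=(e_t)_\#\boldsymbol\eta=(T_t)_\#\mu=\mu_t$, which is the claim. The only care needed is that $\mu$ is merely $\sigma$-finite: one localizes, using the uniform local mass bounds above, so that the representation is applied to finite pieces.

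When $\mu\ll\mathcal{L}^d$ and $V$ is, say, locally Lipschitz — the situation of all applications in this paper — one can bypass the superposition principle: $T_t$ is then bi-Lipschitz, $\phi\circ T_t\in W^{1,\infty}$ with compact support, and the flow identity $DT_t(x)\,V(x)=V(T_t(x))$ (valid a.e.) gives $V\cdot\nabla(\phi\circ T_t)=(V\cdot\nabla\phi)\circ T_t$ a.e., so that $g'(t)=\int V\cdot\nabla(\phi\circ T_t)\,d\mu=0$ after extending $\div(V\mu)=0$ from smooth to Lipschitz test functions by mollification (legitimate since $\mu$ has an $L^1_{\mathrm{loc}}$ density). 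The main obstacle is exactly the merely continuous case: there is no Jacobian and no change-of-variables formula for the homeomorphism $T_t$, and $\mu$ may be singular, so the naive computation collapses and one really needs the probabilistic representation of solutions of the continuity equation. Everything else — the $C^1$ regularity of $g$, the compact-support bookkeeping, and the $\sigma$-finite localization — is routine.
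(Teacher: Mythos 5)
Your overall strategy is the same as the paper's: view $\mu$ (and $T_{t\#}\mu$) as solutions of the continuity equation driven by $V$, apply the superposition principle to represent a solution by a measure on path space concentrated on integral curves, and use the assumed uniqueness of the Cauchy problem to identify that measure with $G_{\#}\mu$, $G(x):=T_{\bullet}(x)$, whence $T_{t\#}\mu=\mu$. For \emph{finite} $\mu$ your argument is complete and in fact marginally cleaner than the paper's (you apply the superposition principle once, to the constant curve, rather than to two curves compared by disintegration), and your side remark on the absolutely continuous, locally Lipschitz case matches the simplification the paper itself points out after the proof.

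The gap is exactly at the step you dismiss as routine: the reduction of the $\sigma$-finite case to ``finite pieces''. The superposition principle you invoke (theorem~12 of the Ambrosio--Crippa reference, or theorem~3.21 of the Stepanov--Trevisan one) applies to curves of \emph{finite} measures solving the continuity equation; but if you localize statically --- replacing $\mu$ by $\mu\restriction B_R$ or by $\varphi\mu$ with a fixed cutoff $\varphi\in C_c^\infty$ --- the constant curve $t\mapsto\varphi\mu$ is no longer a solution, since $\div(V\varphi\mu)=(\nabla\varphi\cdot V)\mu\neq 0$ in general, so the representation theorem simply cannot be applied to these pieces, and your uniform local mass bounds do not repair this. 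The paper's proof resolves precisely this point with a time-dependent, flow-transported weight $\varphi_t:=\varphi\circ T_t^{-1}$: because $\partial_t\varphi_t+\nabla\varphi_t\cdot V=0$, both $\varphi_t\mu$ (using $\div(V\mu)=0$) and $\varphi_t T_{t\#}\mu$ are finite-mass solutions of the continuity equation with the common initial datum $\varphi\mu$, and the superposition principle plus uniqueness of characteristics identifies them, giving $T_{t\#}\mu=\mu$ after varying $\varphi$. So either you supply such a moving cutoff (or invoke a genuinely local version of the superposition principle valid for locally finite solutions, which is a different statement from the one you cite); as written, the localization step would fail.
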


This lemma is well-known in a very particular case when $V$ is smooth and
$\mu$ is the Lebesgue measure, though for the general situation its proof maybe not quite immediate.

\begin{proof}
%Let $f\in C^1(\R^d)$ have compact support. Suppose first that $\mu$ is invariant with respect to $T_t$. One has then
%\begin{equation}\label{eq_invmeas_div1}
%\begin{aligned}
%0& =\frac{d\,}{dt} \int_{\R^d} f\, d\mu= \frac{d\,}{dt} \int_{\R^d} f\, d{T_{t\#}}\mu=
% \frac{d\,}{dt} \int_{\R^d} f(T_t(x))\, d\mu(x)\\
% & =\int_{\R^d} \frac{d\,}{dt}  f(T_t(x))\, d\mu(x),
%\end{aligned}
%\end{equation}
%the latter equality being due to Lebesgue dominated convergence theorem since
%\begin{equation}\label{eq_invmeas_div2}
%\frac{d\,}{dt}  f(T_t(x))= \nabla f(T_t(x))\cdot V(T_t(x))
%\end{equation}
%is bounded and has compact support, hence is integrable with respect to $\mu$.
%Plugging~\eqref{eq_invmeas_div2} into~\eqref{eq_invmeas_div1}, we get
%\[
%0=\int_{\R^d} \nabla f(T_t(x))\cdot V(T_t(x))\, d\mu(x)=\int_{\R^d} \nabla f(x)\cdot V(x)\, d\mu(x),
%\]
%the latter again by invariance of $\mu$ with respect to $T_t$, proving that $\div (V\mu)=0$ in the weak $\div (V\mu)=0$ in the weak
%(i.e.\ distributional) sense in $\R^d$.
%
%Vice versa, suppose $\div (V\mu)=0$ in the weak sense, that is,
%%\[
%%\begin{equation}\label{eq_invmeas_div3}
%%\int_{\R^d} \nabla f\cdot V\, d\mu=0
%%\end{equation}
%%\]
%%for every smooth $f$ with compact support in $\R^d$. In other words,
The ``constant'' curve of measures $\{\mu_t\}_{t\in \R^+}$ defined by $\mu_t:=\mu$ for all $t\in \R^+$,
clearly satisfies the continuity equation
\begin{equation}\label{eq_invmeas_div3}
\partial_t \mu_t + \div(V\mu_t)=0
\end{equation}
in the weak (distributional) sense in $\R^+\times\R^d$.
Consider an arbitrary $\varphi\in C^1(\R^d)$ and set $\varphi_t(x):= \varphi(T_t^{-1}(x))$ (note that unders the conditions of the Lemma being proven
$T_t$ is in fact a homeomorphism).
We show
\begin{equation}\label{eq_invmeas_div3a}
\partial_t (\varphi_t \mu) + \div(V\,\varphi_t \mu)=0
\end{equation}
in the weak sense in $\R^+\times\R^d$.
In fact,
\begin{align*}
0& =\frac{d\,}{dt} \varphi(x)= \frac{d\,}{dt} \varphi_t(T_t(x)) = (\partial_t\varphi_t)(T_t(x)) +(\nabla\varphi_t)(T_t(x))\cdot V(T_t(x))
\end{align*}
for all $x\in \R^d$ in view of~\eqref{eq_odeaut1}, and hence
\begin{equation}\label{eq_invmeas_div3b}
\partial_t\varphi_t(x) +\nabla\varphi_t(x)\cdot V(x)=0
\end{equation}
for all $x\in \R^d$. Therefore, using the chain rule for distributional derivatives,~\eqref{eq_invmeas_div3b} and~\eqref{eq_invmeas_div3}, we get
\begin{align*}
\partial_t (\varphi_t \mu) + \div(V\,\varphi_t \mu) & = \mu \left(\partial_t \varphi_t  + \nabla\varphi_t \cdot V\right) +
\varphi_t\left(\partial_t \mu + \div(V\mu)\right)=0,
\end{align*}
that is,~\eqref{eq_invmeas_div3a}.

On the other hand, clearly, $T_{t\#}(\varphi\mu):= \varphi_t T_{t\#}\mu$, and hence
\begin{equation}\label{eq_invmeas_div4}
\partial_t (\varphi_t T_{t\#}\mu) + \div(V\,\varphi_t T_{t\#}\mu)=0
\end{equation}
in the weak sense in $\R^+\times\R^d$ because of~\eqref{eq_odeaut1}. Assuming now that $\varphi$ is nonnegative and have compact support in $\R^d$, we have that both curves of finite positive Borel measures $\{\varphi_t T_{t\#}\mu\}_{t\in \R^+}$ and
$\{\varphi_t \mu\}_{t\in \R^+}$ satisfy the continuity equation, and have the same initial point $\varphi\mu$.
By the superposition principle for curves of measures
(see theorem~12 in~\cite{ambrosio-crippa-edinburgh} for its statement in the Euclidean space, or theorem~3.21 in~\cite{SteTrev14} for the general metric space version)
for each fixed $T>0$ there are
finite positive Borel measures $\eta$ and $\tilde\eta$ on $C([0,T]; \R^d)$ each concentrated over trajectories of~\eqref{eq_odeaut1} such that $e_{t\#}\eta=\varphi_t T_{t\#}\mu$ and $e_{t\#}\tilde\eta=\varphi_t \mu$ for a.e.\
$t\in [0,T]$, $e_{0\#}\eta=e_{0\#}\tilde\eta=\varphi \mu$, where $e_t\colon C([0,T]; \R^n)\to \R^d$ stands for the evaluation map
$e_t(\theta):=\theta(t)$. Disintegrating
\[
\eta=(\varphi\mu)\otimes \eta_x, \qquad \tilde \eta=(\varphi\mu)\otimes \tilde\eta_x
\]
with $\eta_x$ and $\tilde \eta_x$ Borel probability measures concentrated each over the trajectory of~\eqref{eq_odeaut1}
starting at $x\in \R^d$, we have that $\eta_x=\tilde\eta_x$ for $\varphi\mu$-a.e.\ $x\in \R^d$ by the assumption on unique solvability of~\eqref{eq_odeaut1}. Thus
$\eta=\tilde \eta$, which implies $\varphi_t T_{t\#}\mu=\varphi_t \mu$ for a.e. $t\in [0,T]$, and hence, since $\varphi$ is
arbitrary (nonnegative with compact support) and $T>0$ is arbitrary, we get that $T_{t\#}\mu=\mu$ for a.e., and thus
for all $t\in \R^+$, i.e.\
$\mu$ is invariant as claimed.
\end{proof}

\begin{remark}
In fact, under the conditions of the above Lemma~\ref{lm_invmeas_div1} the Radon measure $\mu$ over $\R^d$ is invariant with respect to $T_t$, if and only if $\div (V\mu)=0$ in the weak
sense in $\R^d$. The ``if'' part is however trivial: for $f\in C^1(\R^d)$ with compact support we get
\begin{equation}\label{eq_invmeas_div1}
\begin{aligned}
0& =\frac{d\,}{dt} \int_{\R^d} f\, d\mu= \frac{d\,}{dt} \int_{\R^d} f\, d{T_{t\#}}\mu=
 \frac{d\,}{dt} \int_{\R^d} f(T_t(x))\, d\mu(x)\\
 & =\int_{\R^d} \frac{d\,}{dt}  f(T_t(x))\, d\mu(x),
\end{aligned}
\end{equation}
the latter equality being due to the Lebesgue dominated convergence theorem since
\begin{equation}\label{eq_invmeas_div2}
\frac{d\,}{dt}  f(T_t(x))= \nabla f(T_t(x))\cdot V(T_t(x))
\end{equation}
is bounded and has compact support, hence is integrable with respect to $\mu$.
Plugging~\eqref{eq_invmeas_div2} into~\eqref{eq_invmeas_div1}, we get
\[
0=\int_{\R^d} \nabla f(T_t(x))\cdot V(T_t(x))\, d\mu(x)=\int_{\R^d} \nabla f(x)\cdot V(x)\, d\mu(x),
\]
the latter again by invariance of $\mu$ with respect to $T_t$, proving that $\div (V\mu)=0$ weakly in $\R^d$.
\end{remark}

It is worth emphasizing that the above Lemma~\ref{lm_invmeas_div1} can be stated and proven in a similar way even for less regular vector fields $V$. However, in the present paper we apply it only in a very particular situation when $V$ is bounded and locally Lipschitz, and $\mu=\psi\,dx$ with $\psi\colon \R^d\to \R^+$ smooth. In this case its proof %of the ``only if'' (i.e.\ second) part of
%the above Lemma
could be clearly simplified without even referring to general superposition principle, but just by observing ``manually'' the uniqueness of positive solutions to continuity equation. We provided here a more general (and less common) version just for the readers' convenience.

Another easy statement we use in the paper is as follows.

\begin{lemma}\label{lm_invsupp1}
If $T\colon X\to X$ is continuous, has an
invariant measure $\mu$, and $T(\supp\mu)$ is closed (in particular, this
is true if $\mu$ has compact support).
Then
$\supp \mu$ is %positively
invariant for $T$.
\end{lemma}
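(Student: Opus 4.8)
The plan is to prove the two inclusions $T(\supp\mu)\subseteq\supp\mu$ and $\supp\mu\subseteq T(\supp\mu)$ separately, recalling that by the conventions fixed in Section~2 a set $M$ is invariant for $T$ precisely when $T(M)=M$, and $\mu$ being an invariant measure means $T_{\#}\mu=\mu$, i.e.\ $\mu(T^{-1}(B))=\mu(B)$ for every Borel $B\subseteq X$.

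First I would establish $T(\supp\mu)\subseteq\supp\mu$, which uses only continuity of $T$ together with invariance of $\mu$, and not the closedness hypothesis. Take $x\in\supp\mu$ and suppose, towards a contradiction, that $T(x)\notin\supp\mu$; then there is an open set $U\ni T(x)$ with $\mu(U)=0$. By continuity $T^{-1}(U)$ is an open neighborhood of $x$, and by invariance $\mu(T^{-1}(U))=\mu(U)=0$, contradicting $x\in\supp\mu$.

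For the reverse inclusion the plan is to argue that $\mu$ is concentrated on $T(\supp\mu)$. Since $T(\supp\mu)$ is assumed closed, its complement $X\setminus T(\supp\mu)$ is open, hence Borel, and
\begin{align*}
\mu\big(X\setminus T(\supp\mu)\big)&=\mu\Big(T^{-1}\big(X\setminus T(\supp\mu)\big)\Big)\\
&=\mu\big(\{x\in X\colon T(x)\notin T(\supp\mu)\}\big)\\
&\leq \mu(X\setminus\supp\mu)=0,
\end{align*}
where the first equality is invariance of $\mu$, and the inclusion $\{x\colon T(x)\notin T(\supp\mu)\}\subseteq X\setminus\supp\mu$ holds because $x\in\supp\mu$ forces $T(x)\in T(\supp\mu)$. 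Thus $X\setminus T(\supp\mu)$ is an open $\mu$-null set; but any point lying outside a closed set with $\mu$-null complement has an open $\mu$-null neighborhood, hence cannot belong to $\supp\mu$. Therefore $\supp\mu\subseteq T(\supp\mu)$, and combining the two inclusions gives $T(\supp\mu)=\supp\mu$, i.e.\ $\supp\mu$ is invariant.

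Finally, when $\mu$ has compact support, $T(\supp\mu)$ is the continuous image of a compact set, hence compact and therefore closed in the metric space $X$, so the closedness hypothesis is automatically satisfied. The only real subtlety here—and the very reason for imposing ``$T(\supp\mu)$ closed''—is that the continuous image of a Borel set need not be Borel, so without this hypothesis the quantity $\mu(X\setminus T(\supp\mu))$ need not even be well defined; everything else is routine.
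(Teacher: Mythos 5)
Your proof is correct and follows essentially the same route as the paper: both inclusions are obtained from the invariance $T_{\#}\mu=\mu$, the first via continuity of $T$ (your pointwise neighborhood argument is just a rephrasing of the paper's observation that $\mu$ is concentrated on the closed set $T^{-1}(\supp\mu)$), and the second via the inclusion $\supp\mu\subset T^{-1}(T(\supp\mu))$ together with the closedness of $T(\supp\mu)$, exactly as in the paper.
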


\begin{proof}
Denoting for brevity $M:=\supp \mu$, we have that $T^{-1}(M)$
is closed and $\mu((T^{-1} (M))^c)=\mu(T^{-1} (M^c))=0$, in other words, $\mu$
is concentrated on $T^{-1}(M)$,
which implies
$M\subset T^{-1}(M)$, thus $T(M)\subset M$.
On the other hand,
\begin{align*}
\mu((T(M))^c) &= (T_{\#}\mu)((T(M))^c)\\
& = \mu(T^{-1}((T(M))^c)=
\mu((T^{-1}(T(M))^c)\leq \mu(M^c)=0,
\end{align*}
the inequaity in the above chain being due to
$M\subset T^{-1}(T(M))$. Thus
$\mu$
is also concentrated on $T(M)$
and since $T(M)$ is also closed, we have $M\subset T(M)$ concluding the proof.
% One has to show that
% \[
% M^+:= \{x\in \supp\mu\colon T(x)\not\in \supp\mu\}, \quad
% M^-:= \{x\in \supp\mu\colon T^{-1}(x)\cap \supp\mu=\emptyset\}
% \]
% are emptysets. Observe that they
% are $\mu$-negligible and in particular dense in $M$, because
% \begin{align*}
% \mu(M^+)& \leq \mu(T^{-1}(T(M^+)))= \mu(T(M^+))=0,\\
% \mu(M^-)& = \mu(T^{-1}(M^-))=0.
% \end{align*}
% Now, $M^+=\emptyset$, since otherwise there is an $x\in M^+$
% and a sequence $x_k\in M\setminus M^+$ such that $\lim_k x_k =x$ (in fact,
% $x$ cannot be isolated, because $\mu(M^+)=0$), so that
% $T(x)=\lim_k T(x_k)\in \supp\mu$
% (because all $T(x_k)\in \supp\mu$) contradicting the assumption on $x$.
% Analogously, $M^-=\emptyset$, since otherwise there is an $x\in M^-$
% (which again cannot be isolated in view of $\mu(M^+)=0$) and
% a sequence $x_k\in M\setminus M^-$ such that $\lim_k x_k =x$, but
% $x_k\in T(\supp\mu)$, the latter set being compact, so that
% $x\in T(\supp\mu)$ contradicting the assumption on $x$.
\end{proof}

Note that continuity of $T$ in the above Lemma~\ref{lm_invsupp1} is essential.

\bibliographystyle{plain}
%\bibliography{invmeas}

\end{document}